\documentclass[pdflatex,sn-aps]{sn-jnl}

\usepackage{graphicx}%
\usepackage{multirow}%
\usepackage{amsmath,amssymb,amsfonts}%
\usepackage{amsthm}%
\usepackage{mathrsfs}%
\usepackage[title]{appendix}%
\usepackage{xcolor}%
\usepackage{textcomp}%
\usepackage{manyfoot}%
\usepackage{booktabs}%
\usepackage{algorithm}%
\usepackage{algorithmicx}%
\usepackage{algpseudocode}%
\usepackage{listings}%
\usepackage{subfigure}
\theoremstyle{thmstyleone}%
\newtheorem{theorem}{Theorem}[section]
\newtheorem{lemma}{Lemma}[section]
\newtheorem{corollary}{Corollary}[section]

\theoremstyle{thmstyletwo}%
\newtheorem{remark}{Remark}%

\theoremstyle{thmstylethree}%

\begin{document}

\title[Article Title]{Scaling Optimized Spectral Approximations on Unbounded Domains: The Generalized Hermite and Laguerre Methods}


\author[1,2]{\fnm{Hao} \sur{Hu}}\email{huhao@lsec.cc.ac.cn}

\author*[2,1]{\fnm{HaiJun} \sur{Yu}}\email{hyu@lsec.cc.ac.cn}


\affil[1]{\orgdiv{School of Mathematical Sciences}, \orgname{University of Chinese Academy of Sciences}, \orgaddress{\street{Street}, \city{Beijing}, \postcode{100049}, \state{State}, \country{China}}}

\affil[2]{\orgdiv{State Key Laboratory of Mathematical Sciences (SKLMS) and State Key Laboratory of Scientific and Engineering Computing (LSEC)}, \orgdiv{Institute of Computational Mathematics and Scientific/Engineering Computing}, \orgname{Academy of Mathematics and Systems Science}, \orgaddress{\city{Beijing}, \postcode{100190}, \country{China}}}



\abstract{
We propose a novel error analysis framework for scaled generalized Laguerre and generalized Hermite approximations.
This framework can be regarded as an analogue of the Nyquist-Shannon sampling theorem: 
It characterizes the spatial and frequency bandwidths that can be 
effectively captured by Laguerre or Hermite sampling points. Provided a 
function satisfies the corresponding bandwidth constraints, it can be 
accurately approximated within this framework. The proposed framework is 
notably more powerful than classical theory --- it not only provides systematic guidance
for choosing the optimal scaling factor, but also predicts root-exponential and other intricate convergence behaviors that classical approaches fail to capture. Leveraging this framework, we conducted a detailed comparative study of Hermite and Laguerre approximations.
We find that functions with similar decay and oscillation characteristics may nonetheless display markedly different convergence rates. 
Furthermore, approximations based on two concatenated sets of Laguerre functions may offer significant advantages over those using a single set of Hermite functions.
}

\keywords{generalized Laguerre functions, generalized Hermite functions, spectral methods on unbounded domains, optimal scaling factor,
error analysis, Hermite--Gauss quadrature, Laguerre--Gauss quadrature}



\pacs[MSC Classification]{65N35, 33C45, 42C05, 41A10,41A05, 65D32}

\maketitle

\section{Introduction}
Spectral methods, based on orthogonal polynomials or functions as basis sets, are among the most widely used approaches for the numerical solution of partial differential equations(PDEs). Renowned for their high approximation accuracy, they play a pivotal role in scientific computing and engineering simulations.

On unbounded domains, Laguerre and Hermite polynomials are particularly well-suited. However, applying Laguerre (respectively, Hermite) polynomials to solve differential equations defined on the half-line (respectively, the whole real line) presents several challenges~\cite{gottlieb_numerical_1977,funaro1989computational,shen2000stable}:
1) When computing Laguerre or Hermite polynomials of high degree in floating-point arithmetic, one encounters overflow issues and instability due to round-off errors. 
2) Direct use of Laguerre and Hermite polynomials yields accurate approximations only over relatively small intervals, as the error estimates are typically derived in weighted Sobolev spaces equipped with rapidly decaying weights. 
3) Standard Laguerre and Hermite polynomials exhibit inferior resolution properties compared to other families of polynomials that are orthogonal on bounded domains.

Significant progresses have been made in addressing these issues. To tackle the first challenge, Funaro introduced a scaled Laguerre function~\cite{funaro1989computational}. Further advancements were recently proposed by Huang and Yu~\cite{huang2024improved}. For the second challenge, Shen advocated the use of Laguerre functions ---rather than polynomials--- as basis functions to form the spectral methods~\cite{shen2000stable}, an approach that also partially alleviates the first challenge. Regarding the third issue, it has been shown that the approximation quality of both Hermite and Laguerre polynomials/functions can be significantly improved by choosing appropriate scaling factors~\cite{tang1993hermite,shen2000stable}.
Thanks to these breakthroughs, the stability and efficiency of Laguerre and Hermite spectral methods have been substantially improved, leading to their widespread adoption across diverse scientific and engineering applications.

For instance, stable Laguerre methods have been developed for solving nonlinear PDEs on semi-infinite intervals~\cite{guo2000laguerre}.
Laguerre polynomials and functions have also been employed to construct composite spectral or spectral element methods for problems in unbounded domains~\cite{wang2008composite,guo2001composite,xu2002mixed,shen2006laguerre,azaiez2009laguerre,chen2012new,shen2016efficient}.
Additionally, Laguerre methods have been used to design spectral numerical integrators for ordinary differential equations~\cite{ben2007numerical,guo2008integration}, efficient time-splitting Laguerre--Hermite spectral methods for Bose--Einstein condensates~\cite{bao_fourth-order_2005, bao2008generalized}, and methods for optimal control
~\cite{masoumnezhad2020laguerre} and fractional differential equations~\cite{bhrawy2014efficient,chen2018laguerre}.

Similarly, Hermite spectral methods have found broad applicability across numerous fields. These include Schr\"odinger equations on unbounded domains~\cite{bao_fourth-order_2005,shen_error_2013,sheng_nontensorial_2021}, Vlasov and kinetic equations~\cite{shan_discretization_1998, mieussens_discrete-velocity_2000, gibelli_spectral_2006, cai_numerical_2010,Mizerova.She2018,funaro_stability_2021,zhang_error_2023}, 
high-dimensional PDEs and stochastic differential equations~\cite{luo_hermite_2013, zhang_sparse-grid_2013},
fluid dynamics and uncertainty quantification~\cite{cameron_orthogonal_1947,
meecham_wienerhermite_1964, orszag_dynamical_1967, xiu_modeling_2003,
venturi_wick-malliavin_2013, tang_discrete_2014, nobile_adaptive_2016,
wan_numerical_2019},
and even electronic design~\cite{manfredi_perturbative_2020, chen_analytical_2021}, among others.

Since standard Laguerre polynomials exhibit relatively poor resolution properties compared to other orthogonal polynomial families---a characteristic shared by Hermite polynomials~\cite{gottlieb_numerical_1977}---it has long been suggested that appropriate coordinate scaling can significantly
enhance the performance of both Laguerre- and Hermite-based spectral methods~\cite{tang1993hermite, shen2000stable}.
In practice, numerous algorithms incorporating scaling have been developed.
For instance, Ma et al.~\cite{ma_hermite_2005} proposed a time-dependent scaling for parabolic PDEs, while Xia et al.~\cite{xia_efficient_2021} introduced a frequency-dependent scaling strategy.

Nevertheless, a systematic framework for selecting the optimal scaling factor and explicitly revealing how the approximation error depends on this factor is still lacking.  At its core, this difficulty stems from fundamental limitations in the classical theories of approximation errors, which are typically formulated in finite-smoothness function spaces. Not only do these theories fail to characterize the approximation capabilities of scaled Laguerre functions, but they also cannot predict the root-exponential convergence rates observed when approximating analytic functions~\cite{wang2024convergence}, let alone other more intricate convergence behaviors.

Hermite approximations face analogous challenges in selecting optimal scaling factors and convergence characterization—issues that were systematically addressed in our earlier work~\cite{hu2024scaling}. 
The Laguerre case considered in this paper presents greater analytical difficulties, and our results here substantially generalize the Hermite findings of~\cite{hu2024scaling}.

Specifically, we propose a novel error analysis framework for scaled Laguerre approximation. 
This framework can be viewed as an analogue of the Nyquist-Shannon sampling theorem. 
We precisely characterize the spatial (physical) and frequency bandwidth that can be 
captured by the Laguerre sampling points. As long as a target function satisfies the corresponding bandwidth constraints, it admits a high-fidelity approximation within this framework. Our characterization surpasses classical theories in several key aspects, as demonstrated below.

First, our framework provides systematic guidance for scaling. By applying the optimal scaling factor, the convergence rate of generalized Hermite or Laguerre approximations can be markedly improved for a broad class of functions.
As shown in Section~\ref{sec:experiments}, functions that originally exhibit subgeometric convergence under standard Hermite/Laguerre approximation can be accelerated to geometric convergence through optimal scaling. Similarly, for functions converging algebraically, optimal scaling doubles the convergence order. Moreover, the well-documented inefficiency of Hermite/Laguerre Gaussian quadrature compared to Gaussian quadrature over bounded domains
~\cite{trefethen2022exactness,kazashi_suboptimality_2023} can be effectively mitigated via appropriate scaling (see Section~\ref{subsec:optimality}). Notably, Kazashi et al.~\cite{kazashi_suboptimality_2023} demonstrate that even with alternative quadrature weights, Gauss--Hermite nodes without scaling remain inherently suboptimal --- an observation that underscores the critical role of scaling in enhancing approximation performance.

Second, our framework successfully predicts root-exponential and other complex convergence behaviors that lie beyond the reach of classical theory. While the root-exponential convergence of Hermite and Laguerre approximations for analytic functions has only recently been rigorously established~\cite{wang2024convergence,wang2025convergence},
our analysis reveals its origin: it arises directly from the exponential decay rate of the functions' Fourier transform. Furthermore, our approach enables the derivation of non-root-exponential convergence rates (see Section~\ref{subsec: exp conv}), for which rigorous theoretical proofs are still lacking in the literature. 
Intriguingly, Shen et al.~\cite{shen_new_2000,shen_recent_2009,shen2011spectral} reported a root-exponential convergence rate when approximating algebraically decaying functions in the pre-asymptotic regime --- a phenomenon long considered puzzling, since classical theory predicts only algebraic convergence. Our framework offers a clear and complete explanation for this behavior (see Section~\ref{subsec: puzzling convergence}).

Finally, because our framework provides a more detailed characterization than classical theory, it enables a detailed comparative analysis of generalized Hermite and Laguerre approximations. We demonstrate that functions with similar decay and oscillatory characteristics may exhibit markedly different convergence behaviors depending on whether they are approximated using Hermite or Laguerre bases (see Section~\ref{subsec: vs}). We further compare the efficiency of a single set of Hermite functions against two concatenated sets of Laguerre functions. For a class of analytic functions, we show that the Laguerre-based approach performs at least as well as ---and often better than--- the Hermite counterpart (see Section~\ref{subsec:1 Hermite vs 2 Laguerre}).
We conjecture that this superior performance of dual Laguerre sets over a single Hermite set reflects a general principle, meriting more in-depth investigation in further work.

The remainder of this paper is organized as follows. Section~\ref{sec:basic} introduces essential preliminaries on generalized Laguerre and generalized Hermite functions used throughout the paper. Section~\ref{sec:intuitive}  offers an
intuitive overview of our main insights. The core theoretical result ---a novel estimate of the projection error--- is presented in Section~\ref{sec:main}.
Section~\ref{sec:general} extends these results to derivatives, interpolation, quadrature, and model PDEs. Numerical experiments and discussions appear in Section~\ref{sec:experiments}, followed by concluding remarks in Section~\ref{sec:conclusion}.

\section{Preliminaries}
\label{sec:basic}

\subsection{Generalized Laguerre polynomials and functions}
The generalized Laguerre polynomials (GLPs), denoted by $\mathscr{L}_n^{(\alpha)}(x)$,
(with $\alpha > -1$), are orthogonal polynomials with respect to the weight function
$\omega_\alpha^{+}(x)=x^\alpha e^{-x}$ on the half line $\mathbb{R}_{+}:=(0,+\infty)$,
i.e.,
\begin{equation}\label{eq:Laguerre-poly}
  \int_0^{+\infty} \mathscr{L}_n^{(\alpha)}(x) \mathscr{L}_m^{(\alpha)}(x) \omega_\alpha^{+}(x) d x=\gamma_n^{(\alpha)} \delta_{m n},
\end{equation}
where
\begin{equation}\label{eq:gamma-n}
  \gamma_n^{(\alpha)}=\frac{\Gamma(n+\alpha+1)}{n!}.
\end{equation}
The generalized Laguerre functions (GLFs) are defined by
\begin{equation}\label{eq:Laguerre-func}
  \widehat{\mathscr{L}}_n^{(\alpha)}(x):=e^{-x / 2} \mathscr{L}_n^{(\alpha)}(x), \quad x \in \mathbb{R}_{+}, \quad \alpha>-1 .
\end{equation}
From \eqref{eq:Laguerre-poly} and \eqref{eq:gamma-n}, the GLFs are orthogonal with respect to the weight function
$\hat{\omega}_\alpha^{+} = x^\alpha$, i.e.,
\begin{equation}\label{eq:Laguerre-func-orth}
  \int_0^{+\infty} \widehat{\mathscr{L}}_n^{(\alpha)}(x) \widehat{\mathscr{L}}_m^{(\alpha)}(x) \hat{\omega}_\alpha^{+}(x) d x=\gamma_n^{(\alpha)} \delta_{m n}.
\end{equation}

\subsection{Generalized Hermite polynomials and functions}
The generalized Hermite polynomials (GHPs), denoted by $H_n^{(\mu)}(x)$
(with $\mu > -\frac{1}{2}$), are orthogonal polynomials with respect to the weight function
$\omega_{\mu}(x)=|x|^{2\mu} e^{-x^2}$ on the whole line $\mathbb{R}:=(-\infty,+\infty)$,
i.e.,
\begin{equation}\label{eq:Hermite-poly}
  \int_{\mathbb{R}} H_m^{(\mu)}(x) H_n^{(\mu)}(x)|x|^{2 \mu} \mathrm{e}^{-x^2} \mathrm{~d} x=\gamma_{n,H}^{(\mu)} \delta_{m n},
\end{equation}
where
\begin{equation}\label{eq:gamma-n-H}
  \gamma_{n,H}^{(\mu)}=2^{2 n}\left[\frac{n}{2}\right]!\Gamma\left(\Bigl[\frac{n+1}{2}\Bigr]+\mu+\frac{1}{2}\right).
\end{equation}
For $\mu > -\frac{1}{2}$, we define the generalized Hermite function (GHF) of degree $n$ with parameter $\mu$ by
\begin{equation}\label{eq:Hermite-func}
  \widehat{H}_n^{(\mu)}(x):=\sqrt{1 / \gamma_{n,H}^{(\mu)}} \mathrm{e}^{-\frac{x^2}{2}} H_n^{(\mu)}(x), \quad n \geqslant 0, \quad x \in \mathbb{R}.
\end{equation}
Then, we have the orthogonality:
\begin{equation}\label{eq:Hermite-func-orth}
  \int_{\mathbb{R}} \widehat{H}_l^{(\mu)}(x) \widehat{H}_n^{(\mu)}(x)|x|^{2 \mu} \mathrm{~d} x=\delta_{l n}.
\end{equation}
It can be verified that the GHFs satisfy the following three-term
recurrence relation
\begin{equation}\label{eq:0823-3}
  \begin{aligned}
  \widehat{H}_0^{(\mu)}(x) &=\sqrt{1 / \Gamma(\mu+1 / 2)} \mathrm{e}^{-\frac{x^2}{2}}, \\
  \quad \widehat{H}_1^{(\mu)}(x) &=\sqrt{1 / \Gamma(\mu+3 / 2)} x\,\mathrm{e}^{-\frac{x^2}{2}},\\
  \widehat{H}_{n+1}^{(\mu)}(x) &=a_n^{(\mu)} x \widehat{H}_n^{(\mu)}(x)-c_n^{(\mu)} \widehat{H}_{n-1}^{(\mu)}(x), \quad n \geqslant 1, \\
  \end{aligned}
\end{equation}
where
\begin{equation}\label{eq:0823-4}
  a_n^{(\mu)}=\sqrt{\frac{2}{n+1+2 \mu-\theta_n}}, \quad c_n^{(\mu)}=\frac{n+\theta_n}{\sqrt{\left(n+\theta_n /(2 \mu)\right)\left(n+2 \mu+1-\theta_n /(2 \mu)\right)}} ,
\end{equation}
$\theta_n$ is defined as
\begin{equation}\label{eq:0823-5}
  \theta_n= \begin{cases}0, & n \text { even }, \\ 2 \mu, & n \text { odd }.\end{cases}
\end{equation}
From \eqref{eq:0823-3}, we can further derive the recurrence formula
for the derivative of GHFs:
\begin{lemma}\label{lem:derivative recurrence}
When $2 \mid n$, 
\begin{equation}\label{eq:0823-6}
  \partial_x \widehat{H}_n^{(\mu)}=-\sqrt{\frac{n+1+2 \mu}{2}} \widehat{H}_{n+1}^{(\mu)}+\sqrt{\frac{n}{2}} \widehat{H}_{n-1}^{(\mu)},
\end{equation}
when $2 \nmid n$,
\begin{equation}\label{eq:0823-7}
  \begin{aligned}
  \partial_x \widehat{H}_n^{(\mu)} & =
  -\sqrt{\frac{n+1}{2}} \widehat{H}_{n+1}^{(\mu)}
  +\sqrt{\frac{n+2 \mu}{2}} \widehat{H}_{n-1}^{(\mu)}  \\
  &{}\quad + 2 \mu (-1)^{\frac{n-3}{2}} \sqrt{\frac{\left(\frac{n-1}{2}\right)!}{\Gamma\left(\frac{n}{2}+\mu+1\right)}}
   \sum_{k=0}^{\frac{n-1}{2}}(-1)^k \sqrt{\frac{\Gamma\left(k+\mu+\frac{1}{2}\right)}{k!}} \widehat{H}_{2 k}^{(\mu)}.
  \end{aligned}
\end{equation}
\end{lemma}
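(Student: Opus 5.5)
We prove both formulas by reducing the generalized Hermite functions to generalized Laguerre functions and then using the classical differentiation and recurrence identities for Laguerre polynomials. The standard representation is
\[
H_{2m}^{(\mu)}(x)=c_{2m}\,\mathscr{L}_m^{(\mu-1/2)}(x^2),\qquad H_{2m+1}^{(\mu)}(x)=c_{2m+1}\,x\,\mathscr{L}_m^{(\mu+1/2)}(x^2).
\]
Combined with the normalization \eqref{eq:gamma-n-H}, this gives, up to a sign convention fixed by \eqref{eq:0823-3},
\[
\widehat{H}_{2m}^{(\mu)}(x)=(-1)^m\sqrt{\tfrac{m!}{\Gamma(m+\mu+1/2)}}\,e^{-x^2/2}\mathscr{L}_m^{(\mu-1/2)}(x^2),
\]
\[
\widehat{H}_{2m+1}^{(\mu)}(x)=(-1)^m\sqrt{\tfrac{m!}{\Gamma(m+\mu+3/2)}}\,x\,e^{-x^2/2}\mathscr{L}_m^{(\mu+1/2)}(x^2).
\]
The first step is to verify these by orthogonality: substitute $t=x^2$ in \eqref{eq:Hermite-func-orth} and compare with \eqref{eq:Laguerre-poly}. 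The signs are checked against $\widehat H_0,\widehat H_1$ and the leading coefficients in \eqref{eq:0823-3}.

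\textbf{Even case $n=2m$.} Differentiate directly:
\[
\partial_x\big(e^{-x^2/2}\mathscr{L}_m^{(\alpha)}(x^2)\big)=x e^{-x^2/2}\big(2\mathscr{L}_m^{(\alpha)\prime}(t)-\mathscr{L}_m^{(\alpha)}(t)\big),\qquad t=x^2,\ \alpha=\mu-\tfrac12.
\]
The right-hand side is $x e^{-x^2/2}$ times a polynomial in $t$ of degree $m$, so it must be expanded in $\mathscr{L}_k^{(\alpha+1)}(t)$, i.e. in odd functions $\widehat H_{2k+1}$. Use $\mathscr{L}_m^{(\alpha)\prime}=-\mathscr{L}_{m-1}^{(\alpha+1)}$ and $\mathscr{L}_m^{(\alpha)}=\mathscr{L}_m^{(\alpha+1)}-\mathscr{L}_{m-1}^{(\alpha+1)}$. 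Then
\[
2\mathscr{L}_m^{(\alpha)\prime}-\mathscr{L}_m^{(\alpha)}=-\mathscr{L}_m^{(\alpha+1)}-\mathscr{L}_{m-1}^{(\alpha+1)},
\]
so only $\widehat H_{2m+1}$ and $\widehat H_{2m-1}$ appear. Inserting the normalization constants should give $\sqrt{(n+1+2\mu)/2}$ and $\sqrt{n/2}$, with the signs coming from $(-1)^m$.

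\textbf{Odd case $n=2m+1$.} Now $\alpha=\mu+\tfrac12$ and
\[
\partial_x\big(xe^{-x^2/2}\mathscr{L}_m^{(\alpha)}(t)\big)=e^{-x^2/2}\big[(1-t)\mathscr{L}_m^{(\alpha)}+2t\,\mathscr{L}_m^{(\alpha)\prime}\big].
\]
This must be expanded in $\mathscr{L}_k^{(\alpha-1)}(t)$, i.e. in even functions $\widehat H_{2k}$. Use $t\mathscr{L}_m^{(\alpha)\prime}=m\mathscr{L}_m^{(\alpha)}-(m+\alpha)\mathscr{L}_{m-1}^{(\alpha)}$ and the three-term recurrence to rewrite $t\mathscr{L}_m^{(\alpha)}$. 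This leaves a combination of $\mathscr{L}_{m+1}^{(\alpha)},\mathscr{L}_m^{(\alpha)},\mathscr{L}_{m-1}^{(\alpha)}$, which must then be converted to parameter $\alpha-1$ via
\[
\mathscr{L}_j^{(\alpha)}=\sum_{k\le j}\mathscr{L}_k^{(\alpha-1)}.
\]
This conversion is what produces the full tail sum. The coefficients should telescope: the terms $k=m+1$ and $k=m$ survive with the clean coefficients, and all lower $k$ receive one common constant. One then checks that this constant is proportional to $2\mu=2(\alpha-1)+1$, so the sum vanishes when $\mu=0$, consistent with the classical Hermite case.

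\textbf{Main obstacle.} The hard part is this bookkeeping in the odd case: showing the tail coefficient is exactly $2\mu$, independent of $k$ apart from normalization. After converting $\mathscr{L}_k^{(\alpha-1)}$ to $\widehat H_{2k}$, the normalization produces the factor $(-1)^k\sqrt{\Gamma(k+\mu+1/2)/k!}$ inside the sum, and the overall prefactor is $\sqrt{m!/\Gamma(m+\mu+3/2)}$ with $m=(n-1)/2$. Note that $m+\mu+3/2=n/2+\mu+1$, matching the stated prefactor. Finally, fix the sign $(-1)^{(n-3)/2}$ by checking $n=1$ directly from $\widehat H_1$: its derivative must reproduce the $\widehat H_0$ coefficient given by the formula.
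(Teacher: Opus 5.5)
Your proof is correct, and it takes a genuinely different route from the paper's. The paper gives no written proof. It only remarks that the formulas are derived from the three-term recurrence \eqref{eq:0823-3}. Making that rigorous means an induction on $n$: differentiate $\widehat H_{n+1}^{(\mu)}=a_n^{(\mu)}x\widehat H_n^{(\mu)}-c_n^{(\mu)}\widehat H_{n-1}^{(\mu)}$, insert the formulas for $\partial_x\widehat H_n^{(\mu)}$ and $\partial_x\widehat H_{n-1}^{(\mu)}$, and re-expand every $x\widehat H_k^{(\mu)}$, including $x$ times the tail sum. The bookkeeping runs through the parity-dependent coefficients \eqref{eq:0823-4}, and nothing in that route explains why the tail has a uniform coefficient.

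You instead go through \eqref{eq:relaion-Laguerre-Hermite} and standard Laguerre identities. This is direct, non-inductive, and explains the tail: it comes from lowering the parameter from $\mu+\frac12$ (odd family) to $\mu-\frac12$ (even family). I checked your computations. The even case gives $2\mathscr{L}_m^{(\alpha)\prime}-\mathscr{L}_m^{(\alpha)}=-\mathscr{L}_m^{(\alpha+1)}-\mathscr{L}_{m-1}^{(\alpha+1)}$, hence exactly $-\sqrt{m+\mu+\frac12}\,\widehat H_{2m+1}^{(\mu)}+\sqrt m\,\widehat H_{2m-1}^{(\mu)}$. In the odd case one gets
\[
(1-t)\mathscr{L}_m^{(\alpha)}+2t\mathscr{L}_m^{(\alpha)\prime}=(m+1)\mathscr{L}_{m+1}^{(\alpha)}-\alpha\mathscr{L}_m^{(\alpha)}-(m+\alpha)\mathscr{L}_{m-1}^{(\alpha)} .
\]
After lowering the parameter, the coefficients are $m+1$ at $k=m+1$, $m+1-\alpha$ at $k=m$, and $1-2\alpha=-2\mu$ for $k\le m-1$.

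Two small corrections to your bookkeeping. First, the $k=m$ coefficient $m+\frac12-\mu$ is not a clean one. The stated tail sum runs up to $k=(n-1)/2=m$, so you must split it as $(m+\frac12+\mu)-2\mu$. The first part gives $\sqrt{(n+2\mu)/2}\,\widehat H_{n-1}^{(\mu)}$, and the $-2\mu$ joins the tail. Second, the sign $(-1)^{(n-3)/2}=-(-1)^m$ already comes out of the general computation through the $(-1)^m$ normalization factors. The $n=1$ check can only confirm it; a single case cannot fix an $n$-dependent sign.

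A by-product of your route: since $\sum_{k\le m}\mathscr{L}_k^{(\mu-\frac12)}=\mathscr{L}_m^{(\mu+\frac12)}$, the whole tail collapses to $-\frac{2\mu}{x}\widehat H_n^{(\mu)}(x)$. This shows directly why it appears only for odd $n$ and vanishes at $\mu=0$.
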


\subsection{Relations between Laguerre and Hermite polynomials}
The GHPs can be expressed in terms of GLPs:
\begin{equation}\label{eq:relaion-Laguerre-Hermite}
  H_n^{(\mu)}(x)= \begin{cases}(-1)^{\frac{n}{2}} 2^n\left(\frac{n}{2}\right)!\mathscr{L}_{n / 2}^{(\mu-1 / 2)}\left(x^2\right), & n \text { even, } \\ (-1)^{\frac{n-1}{2}} 2^n\left(\frac{n-1}{2}\right)!x \mathscr{L}_{(n-1) / 2}^{(\mu+1 / 2)}\left(x^2\right), & n \text { odd, }\end{cases}
\end{equation}

\subsection{Some useful Fourier transform results}
Let $\mathcal{F}[f(u)](x)$ denotes the Fourier transform of $f(u)$, which
is defined as
\begin{equation}\label{eq:0823-1}
  \mathcal{F}[f(u)](x)=\frac{1}{\sqrt{2 \pi}} \int_{-\infty}^{\infty} e^{-i x u} f(u) d u.
\end{equation}

In the following lemma, we present some useful Fourier transform results that will be used
to construct the main result, Theorem~\ref{thm:bigthm}.
These results can be found in (14.9), (14.10) of~\cite{chihara1955generalized}.
\begin{lemma}\label{lem:transform}
  \begin{equation}\label{eq:0823-2}
    \begin{aligned}
    & \sqrt{2}\mathcal{F}\left[e^{-u^2}|u|^{2 \mu} H_{2 m}^\mu(u)\right](-2x)= \\
    & \qquad\qquad(-1)^m 2^{2 m} \frac{\Gamma\left(m+\mu+\frac{1}{2}\right)}{\Gamma\left(m+\frac{1}{2}\right)} x^{2 m} e^{-x^2} \Phi\left(-\mu, m+\frac{1}{2} ; x^2\right), \\
    & \sqrt{2}\mathcal{F}\left[e^{-u^2}|u|^{2 \mu} H_{2 m+1}^\mu(u)\right](-2x)= \\
    & \qquad \qquad(-1)^m 2^{2 m+1} \frac{\Gamma(m+\mu+3 / 2)}{\Gamma(m+3 / 2)} i\, x^{2 m+1} e^{-x^2} \Phi\left(-\mu, m+3 / 2 ; x^2\right) .
    \end{aligned}
  \end{equation}
\end{lemma}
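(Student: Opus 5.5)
The plan is to reduce both identities to integrals of generalized Laguerre polynomials via \eqref{eq:relaion-Laguerre-Hermite}, expand the Fourier kernel in a power series, integrate term by term, and recognize a Kummer function $\Phi={}_1F_1$, to which Kummer's transformation is then applied.

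\textbf{Even case, reduction.} Write $H_{2m}^{(\mu)}(u)=(-1)^m2^{2m}m!\,\mathscr{L}_m^{(a)}(u^2)$ with $a=\mu-\tfrac12$. The integrand $e^{-u^2}|u|^{2\mu}H^{(\mu)}_{2m}(u)$ is even, so in \eqref{eq:0823-1} only the cosine part survives:
\[
\sqrt2\,\mathcal F[\cdots](-2x)=\frac{2}{\sqrt\pi}\int_0^\infty \cos(2xu)\,e^{-u^2}u^{2\mu}H_{2m}^{(\mu)}(u)\,du .
\]
The substitution $t=u^2$ turns this into a constant multiple of
\[
\int_0^\infty t^{a}e^{-t}\mathscr L_m^{(a)}(t)\cos(2x\sqrt t)\,dt .
\]
Next expand $\cos(2x\sqrt t)=\sum_k(-1)^k(2x)^{2k}t^k/(2k)!$. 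The interchange of sum and integral is justified by dominated convergence, since the absolute series is bounded by $e^{2|x|\sqrt t}$, which is integrable against $t^a e^{-t}|\mathscr L_m^{(a)}(t)|$.

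\textbf{Moment integrals.} By the Rodrigues formula $\mathscr L_m^{(a)}(t)=\frac{1}{m!}t^{-a}e^{t}\frac{d^m}{dt^m}(t^{m+a}e^{-t})$ and $m$ integrations by parts,
\[
\int_0^\infty t^{k+a}e^{-t}\mathscr L_m^{(a)}(t)\,dt=(-1)^m\binom km\,\Gamma(k+a+1).
\]
This moment vanishes for $k<m$, which produces the factor $x^{2m}$. For $k\ge m$, use the duplication formula $(2k)!=4^k k!\,\Gamma(k+\tfrac12)/\sqrt\pi$ and shift the index $k=m+j$. The sum then becomes
\[
\frac{(-1)^m\sqrt\pi\,x^{2m}}{m!}\sum_{j\ge0}\frac{\Gamma(m+j+\mu+\tfrac12)}{\Gamma(m+j+\tfrac12)}\frac{(-x^2)^j}{j!}
=\frac{(-1)^m\sqrt\pi\,x^{2m}}{m!}\,\frac{\Gamma(m+\mu+\tfrac12)}{\Gamma(m+\tfrac12)}\;{}_1F_1\!\left(m+\mu+\tfrac12;m+\tfrac12;-x^2\right).
\]

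\textbf{Kummer's transformation and constants.} Apply ${}_1F_1(b;c;-z)=e^{-z}\,{}_1F_1(c-b;c;z)$. Here $c-b=-\mu$, which gives $e^{-x^2}\Phi(-\mu,m+\tfrac12;x^2)$. Multiplying by the prefactor $(-1)^m2^{2m}m!$ and the normalization constants from the reduction step should reproduce the stated formula. The signs $(-1)^m$ from the Hermite–Laguerre relation and from the Rodrigues moment must be tracked together with the sign from the series.

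\textbf{Odd case.} The argument is identical in structure. Now $H^{(\mu)}_{2m+1}(u)=(-1)^m2^{2m+1}m!\,u\,\mathscr L_m^{(\mu+1/2)}(u^2)$, and the integrand is odd, so only the sine part of $e^{-i(-2x)u}=e^{2ixu}$ survives. This produces the factor $i$, with sign fixed by the argument $-2x$. After $t=u^2$, the extra factor $u$ combines with $|u|^{2\mu}$ to give the weight $t^{\mu+1/2}$, and the series of $\sin(2x\sqrt t)/\sqrt t$ has only even powers $t^k$ with denominators $(2k+1)!$. The same Rodrigues moment (with $a=\mu+\tfrac12$) and the duplication formula $(2k+1)!=4^kk!\,\Gamma(k+\tfrac32)\cdot 2/\sqrt\pi$ lead to ${}_1F_1(m+\mu+\tfrac32;m+\tfrac32;-x^2)$. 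Kummer's transformation then gives $e^{-x^2}\Phi(-\mu,m+\tfrac32;x^2)$.

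\textbf{Main obstacle.} I expect the difficulty to be purely bookkeeping: getting the powers of $2$, the sign $(-1)^m$, and the factor $i$ to match exactly. The analytic steps (the termwise interchange and Kummer's identity) are standard.
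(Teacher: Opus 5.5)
Your derivation is correct and, carried through, reproduces both identities exactly. The paper itself gives no argument for this lemma; it simply quotes (14.9)--(14.10) of Chihara's dissertation.

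One bookkeeping fix is needed. After the shift $k=m+j$, the factor $(-1)^k=(-1)^m(-1)^j$ cancels the $(-1)^m$ from the Rodrigues moment. So in fact
\[
\int_0^\infty t^{a}e^{-t}\mathscr{L}_m^{(a)}(t)\cos\bigl(2x\sqrt t\bigr)\,dt=\frac{\sqrt\pi\,x^{2m}}{m!}\,\frac{\Gamma(m+\mu+\frac12)}{\Gamma(m+\frac12)}\,e^{-x^2}\,\Phi\Bigl(-\mu,m+\frac12;x^2\Bigr)
\]
carries no sign, and the $(-1)^m$ in your intermediate display is spurious. Multiply by $\frac{2}{\sqrt\pi}\cdot\frac12\cdot(-1)^m2^{2m}m!$, which collects the cosine reduction, $du=dt/(2\sqrt t)$, and \eqref{eq:relaion-Laguerre-Hermite}. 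This gives exactly the stated right-hand side, with the final $(-1)^m$ coming only from the Hermite--Laguerre relation.

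In the odd case, the integral of $t^{\mu+\frac12}e^{-t}\mathscr{L}_m^{(\mu+\frac12)}(t)\sin(2x\sqrt t)/\sqrt t$ equals $\frac{\sqrt\pi\,x^{2m+1}}{m!}\frac{\Gamma(m+\mu+\frac32)}{\Gamma(m+\frac32)}e^{-x^2}\Phi(-\mu,m+\frac32;x^2)$. The prefactor $\frac{2i}{\sqrt\pi}\cdot\frac12\cdot(-1)^m2^{2m+1}m!$ then yields the formula with $+i$. The vanishing boundary terms in the integrations by parts, and the identity $\sum_j\frac{\Gamma(b+j)}{\Gamma(c+j)}\frac{z^j}{j!}=\frac{\Gamma(b)}{\Gamma(c)}\Phi(b,c;z)$, both use $\mu>-\frac12$.

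Compared with the citation, your route costs a page but buys two things.
\begin{itemize}
\item It checks the constants directly against the paper's own normalization \eqref{eq:relaion-Laguerre-Hermite} and Fourier convention \eqref{eq:0823-1}, rather than leaving the reader to reconcile them with Chihara's conventions. It also shows why $x^{2m}$ appears (vanishing moments for $k<m$) and why $\Phi(-\mu,\cdot\,;x^2)$ appears (Kummer's transformation).
\item More substantively, your termwise domination by $\cosh(2|x|\sqrt t)\le e^{2|x|\sqrt t}$ works verbatim for complex $x$. So your argument proves the identity for all $x\in\mathbb{C}$. This is exactly how the lemma is used in the proof of Theorem~\ref{thm:bigthm}, where the Hermite coefficients of $g_{\xi,s}$ need the transform at the complex point $z=\xi-is$. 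A bare citation of a real-variable formula would leave that analytic continuation implicit.
\end{itemize}
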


\section{A quick and intuitive understanding of our results}
\label{sec:intuitive}
The traditional approach to studying convergence in Laguerre/Hermite spectral methods focuses on examining the eigenvalues that arise from the Sturm--Liouville problems corresponding to these polynomial families. 
As an example, the generalized Laguerre polynomial is a solution of the Sturm--Liouville differential equation
\begin{equation}
  x^{-\alpha} e^x \partial_x\left(x^{\alpha+1} e^{-x} \partial_x \mathscr{L}_n^{(\alpha)}(x)\right)+\lambda_n \mathscr{L}_n^{(\alpha)}(x)=0,
\end{equation}
with the corresponding eigenvalue $\lambda_n = n$. As emphasized in~\cite{shen2011spectral}, $\lambda_n$ increases linearly rather than quadratically as in the Jacobi case. Consequently, the convergence rate of expansions using GLPs is only half that of expansions based on Jacobi polynomials with ``comparable'' regularity.

Nevertheless, this strategy cannot be effectively employed to
analyze the approximation properties of scaled Laguerre (Hermite)
polynomials $\mathscr{L}_n^{(\alpha)}(\beta x)$ ($\widehat{H}_n^{(\mu)}(\beta x)$). 
Estimates of this type are widely available in the literature.
For example, Theorem 4.1 in~\cite{liu2017fully} asserts:
\begin{equation}\label{eq:liu2017}
  \bigl\|u-\pi_N^{\alpha, \beta} u\bigr\|_{s, \alpha, \beta} \lesssim(\beta N)^{\frac{s-r}{2}}\bigl\|u-\pi_N^{\alpha, \beta} u\bigr\|_{r, \alpha, \beta} \lesssim(\beta N)^{\frac{s-r}{2}}\|u\|_{r, \alpha, \beta}, \quad r \geq s.
\end{equation}
This result yields an estimate for the projection error when approximating 
$u$ by the first $N+1$ scaled Laguerre polynomials $\mathscr{L}_n^{(\alpha)}(\beta x)$.
While the right-hand side of \eqref{eq:liu2017} appears to indicate a convergence rate of $(\beta N)^{\frac{s-r}{2}}$, one must keep in mind that the norm itself also varies with the scaling parameter. Consequently, it becomes challenging to disentangle and assess the precise impact of the scaling factor $\beta$ on the approximation error.

In contrast, our method follows a fundamentally different line of reasoning, based on an observation about the root distribution of Hermite polynomials. Take $H_{2N+2}^{(\alpha + \frac{1}{2})}(\beta x)$ as an example. Let $\{x_j^{(\alpha)}\}_{j=0}^N$ denote its positive roots arranged in ascending order, then from \eqref{eq:relaion-Laguerre-Hermite} we know that $ \bigl(\beta\, x_j^{(\alpha)}\bigr)^2,\, 0\leqslant j \leqslant N$
are roots of $\mathscr{L}_{N+1}^{(\alpha)}(x)$. Hence, by Theorem 8.9.2 of~\cite{szego1975orthogonal},
\begin{equation}\label{eq:shen2011-7.42}
  x_j^{(\alpha)}=\frac{(j+1) \pi+O(1)}{2\beta \sqrt{N+1}}, \text { for } x_j^{(\alpha)} \in(0, \eta/\beta],\; \alpha>-1,
\end{equation}
where $\eta > 0$ is a fixed constant. Moreover, by Theorem 6.31.3 of~\cite{szego1975orthogonal},
\begin{equation}\label{eq:shen2011-7.43}
  \left(x_j^{(\alpha)}\right)^2<(j+(\alpha+3) / 2) \frac{2 j+\alpha+3+\sqrt{(2 j+\alpha+3)^2+1 / 4-\alpha^2}}{\beta^2 \left(N+(\alpha+3) / 2\right)},
\end{equation}
for all $0 \leqslant j \leqslant N$ and $\alpha > -1$. In particular,
\begin{equation}\label{eq:shen2011-7.44}
  x_N^{(\alpha)}=\frac{1}{\beta}\sqrt{4 N+2 \alpha+6+O\left(N^{1 / 3}\right)}.
\end{equation}
Overall, the zeros of $H_N^{(\mu)}(\beta x)$ lie within the interval
$\left[-\mathcal{O}\left(\sqrt{N}/\beta\right), \mathcal{O}\left(\sqrt{N}/\beta\right)\right]$, and the smallest distance between neighboring zeros is typically on the order of $\frac{1}{\beta \sqrt{N}}$.

As already observed in \cite{tang1993hermite}, because the zeros of $H_N^{(\mu)}(\beta x)$—and thus the sampling nodes for Hermite interpolation—are confined to an interval of size ${\sqrt{N}}/{\beta}$, a good approximation based on the first $N+1$ scaled Hermite polynomials $H_n^{(\mu)}(\beta x)$ can only be expected if the target function has decayed sufficiently outside this interval. We refer to ${\sqrt{N}}/{\beta}$ as the spatial bandwidth, and we call the error caused by insufficient decay of the target function outside this spatial bandwidth the \emph{spatial truncation error}.

Our analysis begins with the additional observation that, because the spacing between neighboring interpolation points is approximately $\frac{1}{\beta \sqrt{N}}$, any components of the objective function with frequencies exceeding $\beta \sqrt{N}$ cannot be accurately approximated. This situation is comparable to aliasing in the Shannon sampling theorem, where high-frequency components are inaccurately captured. We therefore refer to $\beta\sqrt{N}$ as the frequency bandwidth, and we define the error caused by the insufficient decay of the Fourier transform of the objective function outside this bandwidth as the \emph{frequency truncation error}.

We conclude that the error in generalized Hermite approximation can be controlled by the spatial truncation error, the frequency truncation error, and an exponentially decaying spectral error (see Theorem \ref{thm:bigthm}). The case of Laguerre approximation can be transformed and analyzed in terms of generalized Hermite approximation.

\section{Main result: projection error estimate}
\label{sec:main}
We begin by defining the generalized Laguerre and Hermite projections. We then establish the equivalence between approximations based on these two orthogonal systems. Finally, we present an error estimation theorem for scaled Hermite approximation. Owing to this equivalence, the corresponding error bounds for Laguerre approximation follow immediately via a straightforward transformation.

\subsection{Definition of projections}
Let $P_N$ denote the set of polynomials of degree at most $N$. We define $\widehat{P}_N^{\beta,L}$ as
\begin{equation}\label{eq:def-PNHat-L}
  \widehat{P}_N^{\beta,L}:=\bigl\{\phi: \phi=e^{- \beta x / 2} \psi, \quad \forall\, \psi \in P_N\bigr\} .
\end{equation}
We define projection operator $\widehat{\Pi}_{N}^{(\alpha,\beta,L)}: L^2_{\hat{\omega}_\alpha^+}(\mathbb{R}_+) \rightarrow \widehat{P}_N^{\beta,L}$ by
\begin{equation}\label{eq:def-projHat-L}
  \left(u-\widehat{\Pi}_{N}^{(\alpha,\beta,L)} u, v_N\right)_{x^\alpha}=0, \quad \forall\, v_N \in \widehat{P}_N^{\beta,L}.
\end{equation}
Then, we have
\begin{equation}\label{eq:0709-1}
  \widehat{\Pi}_{N}^{(\alpha,\beta,L)} u=\sum_{n=0}^N c_n \widehat{\mathscr{L}}_n^{(\alpha)}(\beta x), \ c_n=\frac{1}{\bigl\|\widehat{\mathscr{L}}_n^{(\alpha)}(\beta x)\bigr\|_{x^\alpha}^2} \int_{\mathbb{R}_{+}} u(x) \widehat{\mathscr{L}}_n^{(\alpha)}(\beta x) x^\alpha d x.
\end{equation}
That is to say, $\widehat{\Pi}_{N}^{(\alpha,\beta,L)} u$ is the sum of the first
$N+1$ terms of the Laguerre expansion.

Similarly, we define $\widehat{P}_N^{\beta,H}$ as
\begin{equation}\label{eq:def-PNHat-H}
  \widehat{P}_N^{\beta,H}:=\left\{\phi: \phi=e^{- \beta^2 x^2 / 2} \psi, \quad \forall\, \psi \in P_N\right\} .
\end{equation}
Let $\omega_\mu = |x|^{2\mu}$, we can define 
projection operator $\widehat{\Pi}_{N}^{(\mu,\beta)}: L^2_{\omega_\mu}(\mathbb{R}) \rightarrow \widehat{P}_N^{\beta,H}$ by
\begin{equation}\label{eq:def-projHat-H}
  \left(u-\widehat{\Pi}_{N}^{(\mu,\beta)} u, v_N\right)_{\omega_\mu}=0, \quad \forall\, v_N \in \widehat{P}_N^{\beta,H}.
\end{equation}
So, we have
\begin{equation}\label{eq:0709-2}
  \widehat{\Pi}_{N}^{(\mu,\beta)} u=\sum_{n=0}^N c_n \widehat{H}_n^{(\mu)}(\beta x), \ 
  c_n=\frac{1}{\bigl\|\widehat{H}_n^{(\mu)}(\beta x)\bigr\|_{|x|^{2\mu}}^2} \int_{\mathbb{R}} u(x) \widehat{H}_n^{(\mu)}(\beta x) |x|^{2\mu} d x.
\end{equation}
That is to say, $\widehat{\Pi}_{N}^{(\mu,\beta)} u$ is the sum of the first
$N+1$ terms of the Hermite expansion.

By simply changing the variables, we can prove the following lemma:
\begin{lemma}\label{lem:equiv}
  If $v(x) \in L^2_{x^{\alpha}}(\mathbb{R}_+)$, define $u(x) \in L^2_{|x|^{2\mu}}(\mathbb{R})$ by $u(x)=v(x^2)$, where $\alpha=\mu-1/2$, then
  \begin{equation}\label{eq:equiv}
    \left\|u-\widehat{\Pi}_{2N}^{(\mu,\beta)} u\right\|_{|x|^{2 \mu}}
    =\left\|v-\widehat{\Pi}_{N}^{(\alpha,\beta^2,L)} v \right\|_{x^{\alpha}}.
  \end{equation}
\end{lemma}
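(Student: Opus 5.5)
The plan is to show that the Hermite projection of the even function $u$ is exactly the Laguerre projection of $v$ composed with $x\mapsto x^2$, that is,
\begin{equation*}
\bigl(\widehat{\Pi}_{2N}^{(\mu,\beta)} u\bigr)(x)=\bigl(\widehat{\Pi}_{N}^{(\alpha,\beta^2,L)} v\bigr)(x^2),
\end{equation*}
and then to transfer norms by the substitution $t=x^2$.

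\textbf{Step 1: norm identity.} For any $f$ on $\mathbb{R}_+$, evenness and $t=x^2$, $dx=dt/(2\sqrt t)$ give
\begin{equation*}
\int_{\mathbb{R}}|f(x^2)|^2|x|^{2\mu}\,dx=2\int_0^\infty |f(x^2)|^2x^{2\mu}\,dx=\int_0^\infty|f(t)|^2t^{\mu-1/2}\,dt .
\end{equation*}
Hence $\|f(x^2)\|_{|x|^{2\mu}}=\|f\|_{x^\alpha}$ with $\alpha=\mu-1/2$. By polarization the same holds for inner products. In particular $u\in L^2_{|x|^{2\mu}}(\mathbb{R})$ whenever $v\in L^2_{x^\alpha}(\mathbb{R}_+)$.

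\textbf{Step 2: parity splitting.} Since $u$ is even and $|x|^{2\mu}$ is even, the coefficients $c_n$ in \eqref{eq:0709-2} vanish for odd $n$. This is because $\widehat{H}_n^{(\mu)}(\beta x)$ has the parity of $n$, by \eqref{eq:relaion-Laguerre-Hermite}. So $\widehat{\Pi}_{2N}^{(\mu,\beta)}u$ is the orthogonal projection of $u$ onto
\begin{equation*}
\mathrm{span}\{\widehat{H}_{2n}^{(\mu)}(\beta x)\}_{n=0}^N .
\end{equation*}
By \eqref{eq:relaion-Laguerre-Hermite} and \eqref{eq:Hermite-func}, $\widehat{H}_{2n}^{(\mu)}(\beta x)$ is a constant multiple of
\begin{equation*}
e^{-\beta^2x^2/2}\mathscr{L}_n^{(\alpha)}(\beta^2x^2)=\widehat{\mathscr{L}}_n^{(\alpha)}(\beta^2 x^2).
\end{equation*}
Therefore the even part of $\widehat{P}_{2N}^{\beta,H}$ equals $\{\phi(x^2):\phi\in\widehat{P}_N^{\beta^2,L}\}$.

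\textbf{Step 3: identify the projection.} Set $w=\widehat{\Pi}_{N}^{(\alpha,\beta^2,L)}v$, defined by \eqref{eq:def-projHat-L}. I will check that $w(x^2)$ satisfies the defining relation \eqref{eq:def-projHat-H} for $\widehat{\Pi}_{2N}^{(\mu,\beta)}u$.

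For odd test functions $v_N\in\widehat{P}_{2N}^{\beta,H}$, the integrand $(u-w(x^2))\,v_N\,|x|^{2\mu}$ is odd, so the orthogonality holds trivially.

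For even test functions $v_N=\phi(x^2)$ with $\phi\in\widehat{P}_N^{\beta^2,L}$, Step 1 gives
\begin{equation*}
\bigl(u-w(x^2),\phi(x^2)\bigr)_{|x|^{2\mu}}=(v-w,\phi)_{x^\alpha}=0 .
\end{equation*}
Uniqueness of the orthogonal projection then gives $\widehat{\Pi}_{2N}^{(\mu,\beta)}u=w(x^2)$.

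\textbf{Step 4: conclusion.} Apply the norm identity of Step 1 to $f=v-w$. This yields \eqref{eq:equiv}.

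None of these steps is a real obstacle. The only point needing care is Step 2: one must confirm that $\widehat{P}_{2N}^{\beta,H}$ decomposes into even and odd parts, and that its even part is exactly the image of $\widehat{P}_N^{\beta^2,L}$ under $x\mapsto x^2$. This is immediate because even polynomials of degree at most $2N$ are precisely the polynomials of degree at most $N$ in $x^2$.
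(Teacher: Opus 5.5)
Your proof is correct and follows the route the paper indicates; the paper's only justification is that the lemma follows ``by simply changing the variables.'' The substitution $t=x^2$ turns $\|\cdot\|_{|x|^{2\mu}}$ on even functions into $\|\cdot\|_{x^{\alpha}}$, and the relation $H_{2n}^{(\mu)}\propto\mathscr{L}_n^{(\mu-1/2)}(x^2)$ identifies the even part of $\widehat{P}_{2N}^{\beta,H}$ with $\widehat{P}_N^{\beta^2,L}$ composed with $x\mapsto x^2$, so the two projections correspond. You supply the details the paper leaves out, and your Step 3 on its own (testing against even and odd elements, then using uniqueness) already suffices, so the coefficient-vanishing argument in Step 2 is not strictly needed.
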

From \eqref{eq:0709-1}, \eqref{eq:0709-2} and Lemma \ref{lem:equiv} we know that the
generalized Laguerre approximation is equivalent to the generalized Hermite
approximation. Hence, we can understand the behavior of the Laguerre approximation by
equivalently, considering the Hermite case. 

\subsection{Projection error estimate}
In the next theorem, we give an error estimate when using $N+1$
truncated terms of scaled generalized Hermite functions 
$\widehat{H}^{\left(\mu\right)}_n(\beta x)$ to approximate a function $u$.
This error estimate has three different components: spatial truncation
error, frequency truncation error, and spectral error.

\begin{theorem}\label{thm:bigthm}
  If $u \cdot\bigl(1+x^2\bigr)^{{\mu}/{2}} \in L^2$,
  where $0 \leqslant \mu \in \mathbb{R}$, let $\widehat{\Pi}_N^\beta u$
  denote $\widehat{\Pi}_{N}^{(\mu,\beta)}$ defined in \eqref{eq:def-projHat-H}, and
  $\omega = |x|^{2\mu}$. Let $M = \frac{\sqrt{N}}{2\sqrt{3}}/\beta,\, B = \frac{\sqrt{N}}{2\sqrt{3}} \beta$  
  being the spatial and frequency bandwidths, respectively, that can be resolved when approximating
  $u$ by $\widehat{\Pi}_N^\beta u$. Then we have
  \begin{equation}\label{eq:bigthm}
    \begin{aligned}
    \left\|u-\widehat{\Pi}_N^\beta u\right\|_\omega & 
    \lesssim_\mu \bigl\|u \cdot \mathbb{I}_{\left\{|x|>M\right\}}\bigr\|_\omega 
     +B^{-\mu+\frac{1}{2}} \bigl\| \mathcal{F}[u]\left(B \xi\right) \bigr\|_{H^{\mu}\left(\mathbb{R}\backslash [-1,1]\right)} \\
    &{} \quad +e^{-\frac{N}{24}}\left(\|u\|_\omega+\|u\| \cdot M^\mu\right),
    \end{aligned}
  \end{equation}
  where $f \lesssim_\mu g$ means $f \leqslant C_{\mu} g$,  $C_{\mu}$ is a constant that depends only on $\mu$.
\end{theorem}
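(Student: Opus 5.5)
The plan is to reduce to $\beta=1$ and then use the best-approximation property of the orthogonal projection. Substituting $x\mapsto x/\beta$ turns $\widehat H_n^{(\mu)}(\beta x)$ into $\widehat H_n^{(\mu)}(x)$. It also maps the spatial bandwidth $M=\sqrt N/(2\sqrt3\beta)$ and the frequency bandwidth $B=\sqrt N\beta/(2\sqrt3)$ to the common value $\sqrt N/(2\sqrt3)$. The three terms of \eqref{eq:bigthm} transform consistently: the factor $B^{-\mu+\frac12}$ is exactly the Jacobian produced by rescaling $\mathcal F[u]$ together with the weight $|x|^{2\mu}$. So I will take $\beta=1$ and $M=B=\sqrt N/(2\sqrt3)$.

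Since $\widehat\Pi_N$ is an $\omega$-orthogonal projection, for any auxiliary $g$ we have
\[
\|u-\widehat\Pi_N u\|_\omega\le \|u-g\|_\omega+\|g-\widehat\Pi_N g\|_\omega .
\]
I will take $g$ to be a phase-space localization of $u$. Concretely, $g=\chi_M\cdot(\mathcal F^{-1}[\chi_B\mathcal F u])$, where $\chi_M$ and $\chi_B$ are smooth cutoffs equal to $1$ on $[-M/2,M/2]$ (resp.\ $[-B/2,B/2]$) and vanishing outside $[-M,M]$ (resp.\ $[-B,B]$); the exact constants will be tuned.

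The term $\|u-g\|_\omega$ splits into three pieces:
\begin{itemize}
\item $(1-\chi_M)u$, which is bounded by the spatial tail $\|u\,\mathbb I_{\{|x|>M\}}\|_\omega$ (up to adjusting cutoff constants);
\item $\chi_M\,\mathcal F^{-1}[(1-\chi_B)\mathcal F u]$, on whose support $|x|^{2\mu}\lesssim$ a power of $M$;
\item the commutator between the two cutoffs.
\end{itemize}
For the second piece I will not use the crude bound. Instead I will use that multiplication by $|x|^{\mu}$ corresponds to $\mu$ fractional derivatives on the Fourier side. This gives $\|\mathcal F^{-1}[(1-\chi_B)\mathcal Fu]\|_\omega\lesssim_\mu\|(1-\chi_B)\mathcal F u\|_{H^\mu}$. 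Rescaling $\xi\mapsto B\xi$ then yields exactly the term $B^{-\mu+\frac12}\|\mathcal F[u](B\xi)\|_{H^\mu(\mathbb R\setminus[-1,1])}$. The hypothesis $u(1+x^2)^{\mu/2}\in L^2$ guarantees that all of these norms are finite.

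For $\|g-\widehat\Pi_N g\|_\omega^2=\sum_{n>N}|(g,\widehat H_n^{(\mu)})_\omega|^2$ I will use Parseval together with Lemma~\ref{lem:transform}. The coefficient $(g,\widehat H_n)_\omega$ equals the pairing of $\mathcal F[g]$, which is essentially supported in $|\xi|\le B$, with $\mathcal F[e^{-x^2/2}|x|^{2\mu}H_n^{(\mu)}]$. By Lemma~\ref{lem:transform}, the latter is an explicit multiple of $\xi^n e^{-\xi^2/2}\Phi(-\mu,\cdot\,;\xi^2/4)$ after rescaling. For $|\xi|\le B$ and $n>N=12B^2$, the factor $\xi^ne^{-\xi^2/2}$, divided by the normalization $\sqrt{\gamma^{(\mu)}_{n,H}}$ (which behaves like $\sqrt{n!}$), is bounded via Stirling by $(e\xi^2/n)^{n/2}$. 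This gives geometric decay in $n$, and summing over $n>N$ yields $e^{-cN}$; I will aim for $c=1/24$ by tuning the cutoffs. The Kummer factor grows at most polynomially (like $\xi^{2\mu}$), which produces the extra factors bounded by $\|g\|\,M^\mu$ and $\|g\|_\omega$, and these are in turn controlled by $\|u\|M^\mu+\|u\|_\omega$. The cutoff $\chi_M$ enters only to keep these weighted norms of $g$ comparable to those of $u$ and to control the leakage $\mathcal F[\chi_M]$ beyond $B$. That leakage is Schwartz-small, hence also absorbed into the $e^{-N/24}$ term.

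I expect the main obstacle to be this last step for non-integer $\mu>0$. The weight $|x|^{2\mu}$ breaks the Fourier self-duality of Hermite functions, so the Kummer functions $\Phi(-\mu,m+\frac12;\xi^2)$ have to be bounded uniformly in $m$ on $|\xi|\le B$. One also has to check that the product of the polynomial growth in $\xi$ with $(e\xi^2/n)^{n/2}$ still sums to $e^{-N/24}$. I would first try the crude bound $|\Phi(-\mu,m+\frac12;t)|\lesssim_\mu(1+t/m)^{\mu}$, which follows from the series for $\Phi$ because it has positive coefficient ratios of order $t/m$. The odd-$n$ case is handled the same way using the second identity of Lemma~\ref{lem:transform}.
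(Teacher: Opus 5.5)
\textbf{There is a genuine gap in the spectral step.} Your scaling reduction, the best-approximation inequality, and your treatment of $\|u-g\|_\omega$ essentially reproduce the paper's $E_s$ and $E_f$ estimates. Three small corrections there:
\begin{itemize}
\item No commutator piece is needed, since $u-g=(1-\chi_M)u+\chi_M\mathcal{F}^{-1}[(1-\chi_B)\mathcal{F}u]$ exactly.
\item Rescale $y=Bx$ \emph{before} passing to $H^\mu$, so that $|y|^{2\mu}\le(1+y^2)^\mu$ yields exactly $B^{-\mu+\frac12}$. Rescaling an inhomogeneous $H^\mu$ norm afterwards gives the wrong power of $B$ on the low-order part.
\item Take the cutoffs $\equiv1$ on $[-M,M]$ and $[-B,B]$, so you get the stated norms.
\end{itemize}

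The failing step is the bound on $\|g-\widehat\Pi_N g\|_\omega$.
\begin{itemize}
\item Lemma~\ref{lem:transform} gives the Fourier transform of $e^{-x^2}|x|^{2\mu}H_n^{(\mu)}(x)$, not of $e^{-x^2/2}|x|^{2\mu}H_n^{(\mu)}(x)$. No rescaling converts one into the other, because rescaling changes the argument of $H_n^{(\mu)}$.
\item Already for $\mu=0$, $\mathcal{F}[\widehat H_n^{(0)}]=(-i)^n\widehat H_n^{(0)}$. This oscillates with amplitude of order $n^{-1/4}$ throughout $|\xi|\le B\ll\sqrt{2n}$, so pairing it with $\mathcal{F}[g]$ produces no factor $(e\xi^2/n)^{n/2}$.
\item This is not a technicality: frequency localization alone cannot make $(g,\widehat H_n^{(\mu)})_\omega$ small for $n>N$. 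For $\mu=0$, the band-limited function $g=\mathcal{F}^{-1}[\chi_B\,\mathcal{F}\widehat H_n^{(0)}]$ satisfies $(g,\widehat H_n^{(0)})=\int\chi_B|\widehat H_n^{(0)}|^2\,d\xi$, which is not exponentially small.
\end{itemize}
The smallness comes from \emph{joint} localization of $g$ in the phase-space region $x^2+\xi^2\lesssim N$, whereas $\widehat H_n^{(\mu)}$ lives near $x^2+\xi^2\approx 2n$. In your plan $\chi_M$ plays no role in this step, so the estimate cannot close.

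\textbf{The missing idea is a Gaussian wave-packet decomposition, which is how the paper proceeds.}
\begin{itemize}
\item Take $T_M^s=\mathbb{I}_{[-2M,2M]}*\frac{1}{\sqrt{2\pi}}e^{-x^2/2}$, with $\mathcal{F}[u^B]$ supported in $|\xi|\le 2B$. Then exactly $u^BT_M^s=\frac{1}{2\pi}\int_{|\xi|\le 2B,\,|s|\le 2M}\mathcal{F}[u^B](\xi)\,g_{\xi,s}\,d\xi\,ds$, where $g_{\xi,s}(x)=e^{-(x-s)^2/2+i\xi x}$.
\item In $(g_{\xi,s},\widehat H_n^{(\mu)})_\omega$, the packet's Gaussian and that of $\widehat H_n^{(\mu)}$ combine to $e^{-x^2}$. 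The shift $s$ becomes the imaginary part of the frequency $z=\xi-is$. Lemma~\ref{lem:transform} then gives the coefficients in closed form.
\item Since $|z^2/4|\le m/3$, the Kummer factor is bounded uniformly in $m$, so the growth issue you anticipated does not arise. One gets $|c_n|^2\lesssim_\mu e^{-(\xi^2+s^2)/2}\,n^\mu\bigl(\tfrac{\xi^2+s^2}{2}\bigr)^n/n!$, a Poisson tail in the phase-space radius.
\item Summing over $n>N$, then applying Minkowski and Cauchy--Schwarz in $(\xi,s)$, gives $e^{-N/24}M^\mu\|u\|$.
\end{itemize}

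\textbf{A secondary defect in your choice of $g$.} A compactly supported cutoff $\chi_M$ cannot have an exponentially decaying Fourier transform, by Paley--Wiener. So the leakage of $\mathcal{F}[g]$ beyond $B$ is only subexponentially small in $MB\sim N$ (about $e^{-c\sqrt N}$ for the standard bump), not $e^{-N/24}$. The Gaussian-mollified indicator together with the exact wave-packet representation avoids this.
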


\begin{remark}
  This theorem can be regarded as an analogue of the Nyquist-Shannon sampling theorem: 
  When using the first $N+1$ scaled Hermite functions $\widehat{H}_n^{(\mu)}(\beta x)$, the capturable bandwidth ranges in the spatial and frequency domains are $M = \mathcal{O}\bigl(\sqrt{N}/\beta\bigr)$ and $B = \mathcal{O}\bigl(\sqrt{N}\beta\bigr)$, respectively. 
  The approximation error arises from three sources: 
  \begin{enumerate}
    \item $\left\|u \cdot \mathbb{I}_{\left\{|x|>M\right\}}\right\|_\omega$:
    The insufficient decay of components beyond the spatial bandwidth.
    \item $B^{-\mu+\frac{1}{2}} \left\| \mathcal{F}[u]\left(B \xi\right) \right\|_{H^{\mu}\left(\mathbb{R}\backslash [-1,1]\right)}$:
    the high-frequency components beyond the frequency bandwidth.
    \item $e^{-\frac{N}{24}}\left(\|u\|_\omega+\|u\| \cdot M^\mu\right)$:
    The exponentially decaying error that is attributed to the approximation 
    within the truncated bandwidth range.
  \end{enumerate}
  The following proof strategy is built upon this intuitive understanding.
\end{remark}

\begin{proof}
  Let $v(x) = u(x/\beta)$, then
  \begin{equation}\label{eq:0505-2}
    \left\|u-\widehat{\Pi}_N^\beta u\right\|_\omega=\beta^{-\mu-\frac{1}{2}}\left\|v-\widehat{\Pi}_N^1 v\right\|_\omega.
  \end{equation}
  Similarly, after replacing $u$ with $v$ and $\beta$ with $1$, 
  each term on the right-hand side of \eqref{eq:bigthm} 
  will also acquire the same additional factor $\beta^{-\mu-\frac{1}{2}}$,
  hence it is sufficient to prove \eqref{eq:bigthm} for $\beta = 1$.
  In this case, we have $M = B = \frac{\sqrt{N}}{2\sqrt{3}}$.

  Let us define the spatial truncation function as
  \begin{equation}\label{eq:0505-3}
    T_M^s=\mathbb{I}_{[-2 M, 2 M]} * \frac{1}{\sqrt{2 \pi}} e^{-\frac{1}{2} x^2}.
  \end{equation}
  Then define
  \begin{equation}\label{eq:0505-3.5}
    u_M(x) = u(x) \cdot T_M^s(x).
  \end{equation}
  Let
  \begin{equation}\label{eq:0505-4}
    \alpha(x)=\left\{\begin{array}{cc}
    e^{\frac{1}{x^2-1}} & |x|<1 \\
    0 & |x| \geqslant 1,
    \end{array}\right.
  \end{equation}
  and
  \begin{equation}\label{eq:0505-5}
    \gamma(x)=\left(\int_\mathbb{R} \alpha(x) d x\right)^{-1} \alpha(x) ; \quad \gamma_{\varepsilon}(x)=\varepsilon^{-1} \gamma\left(\frac{x}{\varepsilon}\right).
  \end{equation}
  Then, define the frequency truncation function
  \begin{equation}\label{eq:0505-6}
    T_B^f=\mathbb{I}_{\left[-\frac{3}{2} B, \frac{3}{2} B\right]} * \gamma_{\frac{B}{2}}.
  \end{equation}
  Further define
  \begin{equation}\label{eq:0505-7}
    u^B(x)=\frac{1}{\sqrt{2 \pi}} \int \mathcal{F}[u](\xi) \cdot T_B^f(\xi) \cdot e^{i \xi x} d \xi,  
  \end{equation}
  and
  \begin{equation}
  u_M^B=u^B \cdot T_M^S.
  \end{equation}
  With the above preparation, we can perform the following estimation. Since $\beta = 1$, we
  will ignore the superscript $\beta$ in $\widehat{\Pi}_N^\beta$ for simplicity.
  \begin{equation}\label{eq:0505-9}
    \begin{aligned}
    \left\|u-\widehat{\Pi}_N u\right\|_\omega & \leqslant\left\|u-u_M^B\right\|_\omega+\left\|u_M^B-\widehat{\Pi}_N u_M^B\right\|_\omega+\left\|\widehat{\Pi}_N\left(u_M^B-u\right)\right\|_\omega \\
    & \lesssim\left\|u-u_M^B\right\|_\omega+\left\|u_M^B-\widehat{\Pi}_N u_M^B\right\|_\omega \\
    & \lesssim\left\|u-u_M\right\|_\omega+\left\|u_M-u_M^B\right\|_\omega+\left\|u_M^B-\widehat{\Pi}_N u_M^B\right\|_\omega \\
    & \triangleq E_s+E_f+E_H.
    \end{aligned}
  \end{equation}
  Notice that
  \begin{equation}\label{eq:0505-10}
    \begin{aligned}
    E_s^2 & =\int\left(u-u_M\right)^2 \cdot \omega \\
    & =\int_{|x| \leqslant M}\left(u-u_M\right)^2 \cdot \omega+\int_{|x|>M}\left(u-u_M\right)^2 \cdot \omega.
    \end{aligned}
  \end{equation}
  For the first term,
  \begin{equation}\label{eq:0505-11}
    \int_{|x| \leqslant M}(u-u_M)^2 \cdot \omega=\int_{|x| \leqslant M} u^2\left(1-T_M^s\right)^2 \cdot \omega.
  \end{equation}
  Since for $|x| \leqslant M$
  \begin{equation}\label{eq:0505-12}
    \begin{aligned}
    \left|1-T_M^s\right| & =\left(\int_{2 M-x}^{+\infty}+\int_{-\infty}^{-2 M-x}\right) \frac{1}{\sqrt{2 \pi}} e^{-\frac{1}{2} t^2} d t \\
    & \lesssim \int_M^{+\infty} \frac{1}{\sqrt{2 \pi}} e^{-\frac{1}{2} t^2} d t \\
    & \lesssim e^{-\frac{1}{2} M^2},
    \end{aligned}
  \end{equation}
  putting \eqref{eq:0505-12} into \eqref{eq:0505-11} yields
  \begin{equation}\label{eq:0505-13}
    \begin{aligned}
    \int_{|x| \leqslant M}\left(u-u_M\right)^2 \cdot \omega & \lesssim e^{-M^2} \int u^2 \cdot \omega =e^{-M^2}\|u\|_\omega^2.
    \end{aligned}
  \end{equation}
  For the second term of \eqref{eq:0505-10} we have
  \begin{equation}\label{eq:0505-14}
    \int_{|x|>M}\left(u-u_M\right)^2 \cdot \omega=\int_{|x|>M} u^2\left(1-T_M^s\right)^2 \cdot \omega.
  \end{equation}
  Notice that $0 \leqslant T_M^s \leqslant 1$, we have
  \begin{equation}\label{eq:0505-15}
    \begin{aligned}
    \int_{|x|>M}\left(u-u_M\right)^2 \cdot \omega & \leqslant \int_{|x|>M} u^2 \cdot \omega 
     =\left\|u \cdot \mathbb{I}_{\{|x|>M\}}\right\|_\omega^2.
    \end{aligned}
  \end{equation}
  Putting \eqref{eq:0505-13}, \eqref{eq:0505-15} back into \eqref{eq:0505-10}
  yields
  \begin{equation}\label{eq:0505-16}
    \begin{aligned}
    E_s & \lesssim e^{-\frac{1}{2} M^2}\|u\|_\omega+\|u \cdot \mathbb{I}_{\{|x|>M\}}\|_\omega \\
    & =e^{-\frac{N}{24}}\|u\|_\omega+\left\|u \cdot \mathbb{I}_{\left\{|x|>M\right\}}\right\|_\omega.
    \end{aligned}
  \end{equation}
  For $E_f$ defined in \eqref{eq:0505-9} we have
  \begin{equation}\label{eq:0505-17}
    \begin{aligned}
    E_f^2 & =\int\left|u_M-u_M^B\right|^2 \cdot \omega \\
    & =\int\left|u-u^B\right|^2 \cdot\left(T_M^s\right)^2 \cdot \omega \\
    & \leqslant \int\left|u-u^B\right|^2 \cdot \omega.
    \end{aligned}
  \end{equation}
  The inequality in the above formula is due to $0 \leqslant T_M^s \leqslant 1$.
  Recall that the weight function $\omega = |x|^{2\mu}$, let
  \begin{equation}\label{eq:1111-0.5}
    y=B x, \quad d(y)=u(x)-u^B(x),
  \end{equation}
  then 
  \begin{equation}\label{eq:1111-1}
    \begin{aligned}
     \int\left|u-u^B\right|^2 \cdot \omega d x 
    &=  B^{-2 \mu-1} \int|d(y)|^2|y|^{2 \mu} d y \\
    &\leqslant  B^{-2 \mu-1} \int|d(y)|^2\bigl(1+y^2\bigr)^\mu d y.
    \end{aligned}
  \end{equation}
  Combining this with the definition of the $H^\mu$ norm, we have
  \begin{equation}\label{eq:1111-2}
    \int\left|u-u^B\right|^2 \cdot \omega d x \lesssim B^{-2 \mu-1}\bigl\|\mathcal{F}[d](\xi)\bigr\|_{H^\mu}^2.
  \end{equation}
  From \eqref{eq:0505-7} we have
  \begin{equation}\label{eq:0505-22}
    \mathcal{F}\bigl[u^B\bigr](k)=\mathcal{F}[u](k) \cdot T_B^f(k),
  \end{equation}
  then combining \eqref{eq:1111-0.5} yields
  \begin{equation}\label{eq:1111-3}
    \mathcal{F}[d](\xi)=B \cdot \mathcal{F}[u](B \xi) \cdot \left(1-T_1^f(\xi)\right).
  \end{equation}
  Putting \eqref{eq:1111-3} back into \eqref{eq:1111-2}, we have
  \begin{equation}\label{eq:1111-4}
    \int \left|u-u^B\right|^2 \cdot \omega d x \lesssim B^{-2 \mu+1} \left\| \mathcal{F}[u](B \xi)\left(1-T_1^f(\xi)\right)\right\|_{H^\mu}^2.
  \end{equation}
  Let $g(\xi) = \mathcal{F}[u](B\xi)$, we assert that the following conclusion holds:
  \begin{equation}\label{eq:1111-5}
    \left\|g(\xi)\left(1-T_1^f(\xi)\right)\right\|_{H^\mu(\mathbb{R})} \lesssim_\mu \bigl\|g(\xi)\bigr\|_{H^{\mu}\left(\mathbb{R}\backslash [-1,1]\right)}.
  \end{equation}
  By the definition of $T_B^f(\xi)$ \eqref{eq:0505-6} we have
  \begin{equation}\label{eq:0505-23}
    0 \leqslant T_1^f(\xi) \leqslant 1; \quad T_1^f(\xi) \equiv 1,\ \forall\,|\xi| \leqslant 1.
  \end{equation}
  Hence for $\mu \in \mathbb{N}$, we have
  \begin{equation}\label{eq:1111-6}
    \left\|g\left(1-T_1^f\right)\right\|_{H^\mu(\mathbb{R})}^2=\sum_{j=0}^\mu \int_{\mathbb{R} \backslash[-1,1]}\left|\partial^j\left(g\left(1-T_1^f\right)\right)\right|^2 d \xi.
  \end{equation}
  Combining
  \begin{equation}\label{eq:1111-7}
    \begin{aligned}
    \left|\partial^j\left(g\left(1-T_1^f\right)\right)\right|^2 & =\left|\sum_{r=0}^j\binom{j}{r}\left(\partial^r g\right) \cdot\left(\partial^{j-r}\left(1-T_1^f\right)\right)\right|^2 \\
    & \lesssim_\mu \sum_{r=0}^\mu\left|\partial^r g\right|^2
    \end{aligned}
  \end{equation}
  yields
  \begin{equation}\label{eq:1111-8}
    \left\|g\left(1-T_1^f\right)\right\|_{H^\mu(\mathbb{R})}^2 \lesssim_\mu \sum_{j=0}^\mu \int_{\mathbb{R} \backslash[-1,1]}\bigl|\partial^j g\bigr|^2 d \xi.
  \end{equation}
  That is to say, \eqref{eq:1111-5} holds for $\mu \in \mathbb{N}$.
  It then follows from the interpolation theory of Sobolev spaces that \eqref{eq:1111-5} holds for any $\mu \geqslant 0$.
  Substituting \eqref{eq:1111-5} into \eqref{eq:1111-4} then into \eqref{eq:0505-17}, we obtain
  \begin{equation}\label{eq:1111-9}
    E_f \lesssim_\mu B^{-\mu+\frac{1}{2}} \bigl\|\mathcal{F}[u](B \xi)\bigr\|_{H^{\mu}\left(\mathbb{R}\backslash [-1,1]\right)}.
  \end{equation}
  Next we consider the estimation of $E_H$ defined in \eqref{eq:0505-9}.
  Let
  \begin{equation}\label{eq:0505-33}
    g_{\xi, s}(x)=e^{-\frac{1}{2}(x-s)^2+i \xi x},
  \end{equation}
  then
  \begin{equation}\label{eq:0505-34}
    \begin{aligned}
    u_M^B & =u^B \cdot T_M^s \\
    & =\frac{1}{2 \pi} \int_{|\xi| \leqslant 2 B,|s| \leqslant 2 m} \mathcal{F}\bigl[u^B\bigr](\xi) g_{\xi, s}(x) d \xi d s.
    \end{aligned}
  \end{equation}
  From \ref{lem:transform} we know:
  let $z = \xi - is$, then the generalized Hermite expansion of $g_{\xi,s}$
  \begin{equation}\label{eq:0505-35}
    g_{\xi, s}(x)=\sum_n c_n \widehat{H}_n^{(\mu)}
  \end{equation}
  satisfy: when $n = 2m,\, m \in \mathbb{N}$,
  \begin{equation}\label{eq:0505-36}
    \begin{aligned}
    c_{2 m} & =\sqrt{\pi} \times(-1)^m \times \sqrt{\frac{\Gamma\left(m+\mu+\frac{1}{2}\right)}{m!}} \times \frac{1}{\Gamma\left(m+\frac{1}{2}\right)} \times\left(\frac{z}{2}\right)^{2 m} \\
    &{}\qquad \times e^{-\frac{z^2}{4}-\frac{1}{2} s^2} \times \phi\left(-\mu, m+\frac{1}{2} ; \frac{z^2}{4}\right),
    \end{aligned}
  \end{equation}
  when $n = 2m+1,\, m \in \mathbb{N}$,
  \begin{equation}\label{eq:0505-37}
    \begin{aligned}
    c_{2 m+1} & =\sqrt{\pi} \times(-1)^m \times \sqrt{\frac{\Gamma\left(m+\mu+\frac{3}{2}\right)}{m!}} \times \frac{1}{\Gamma\left(m+\frac{3}{2}\right)} \times i \times\left(\frac{z}{2}\right)^{2 m+1} \\
    &{}\qquad \times e^{-\frac{z^2}{4}-\frac{1}{2} s^2} \times \phi\left(-\mu, m+\frac{3}{2} ; \frac{z^2}{4}\right).
    \end{aligned}
  \end{equation}
  Here, $\phi\left(a,c;x\right)$ in \eqref{eq:0505-36} and \eqref{eq:0505-37} is the so-called
  Kummer's confluent hypergeometric function, which is defined as
  \begin{equation}\label{eq:0505-38}
    \phi(a, c ; x)=1+\frac{a}{c} \frac{x}{1!}+\frac{a(a+1)}{c(c+1)} \frac{x^2}{2!}+\ldots
  \end{equation}
  Since $z = \xi - is$, when $|\xi|,|s| \leqslant 2 B=2 M=\sqrt{\frac{N}{3}}$, we have
  \begin{displaymath}
    \left|\frac{z^2}{4}\right| \leqslant \frac{N}{6}.
  \end{displaymath}
  Then for $n > N,\, m=\left[\frac{n}{2}\right]$ we have
  \begin{equation}\label{eq:0505-39}
    \left|\frac{z^2}{4}\right| \leqslant \frac{m}{3}.
  \end{equation}
  Let $a = -\mu,\,c = m+\frac{1}{2}$ or $c = m+\frac{3}{2},\,x=\frac{z^2}{4}$,
  from \eqref{eq:0505-39} we know $|x / c| \leqslant \frac{1}{3}$, hence
  \begin{equation}\label{eq:0505-40}
    \begin{aligned}
    \left|\frac{a \cdots(a+k-1)}{c \cdots(c+k-1)} \frac{x^k}{k!}\right| 
    & =  \left|\frac{(-\mu) (-\mu+1) \cdots (-\mu+k-1)}{1 \cdot 2  \cdots  k}\right| \cdot\left|\frac{x}{c}\right| \cdot\left|\frac{x}{c+1}\right|  \ldots \left|\frac{x}{c+k-1}\right| \\
    & \leqslant  \frac{1}{3^k}.
    \end{aligned}
  \end{equation}
  From \eqref{eq:0505-40} we know that for $z=\xi-i s$ with $|\xi|,|s| \leqslant 2 B=2 M=\sqrt{\frac{N}{3}}$
  and $m=\left[\frac{n}{2}\right]$ with $n>N$, the following inequality holds:
  \begin{equation}\label{eq:0505-41}
    \left|\phi\left(-\mu, m+\frac{1}{2} ; \frac{z^2}{4}\right)\right|,\left|\phi\left(-\mu, m+\frac{3}{2} ; \frac{z^2}{4}\right)\right| \leqslant \sum_{k\geqslant 0} \frac{1}{3^k} = \frac{3}{2}.
  \end{equation}
  When $n>N$, combining \eqref{eq:0505-36}, \eqref{eq:0505-37} and
  \eqref{eq:0505-41} yields
  \begin{equation}\label{eq:0505-42}
    \left|c_n\right|^2 \lesssim e^{-\frac{1}{2} \xi^2-\frac{1}{2} s^2} \cdot \frac{\Gamma\left(\left[\frac{n+1}{2}\right]+\mu+\frac{1}{2}\right)}{\left[\frac{n}{2}\right]!} \cdot \frac{1}{\left\{\Gamma\left(\left[\frac{n+1}{2}\right]+\frac{1}{2}\right)\right\}^2} \cdot \left(\frac{|z|}{2}\right)^{2 n}.
  \end{equation}
  Further applying Stirling's formula yields
  \begin{equation}\label{eq:0505-43}
    \left|c_n\right|^2 \lesssim_\mu e^{-\frac{1}{2}\left(\xi^2+s^2\right)} \cdot n^\mu \cdot \frac{1}{n!} \cdot\left(\frac{\xi^2+s^2}{2}\right)^n.
  \end{equation}
  From \eqref{eq:0505-35} and \eqref{eq:0505-43} we know: when $|\xi|,|s| \leqslant 2 B=2 M=\sqrt{\frac{N}{3}}$,
  \begin{equation}\label{eq:0505-44}
    \begin{aligned}
    \left\|g_{\xi, s}-\widehat{\Pi}_N g_{\xi, s}\right\|_\omega^2 & =\sum_{n>N}\left|c_n\right|^2 \\
    & \lesssim_\mu \sum_{n>N} e^{-\frac{1}{2}\left(\xi^2+s^2\right)} \cdot n^\mu \cdot \frac{1}{n!} \cdot\left(\frac{\xi^2+s^2}{2}\right)^n \\
    & \lesssim_\mu \frac{\left(\frac{\xi^2+s^2}{2}\right)^{N+1}}{(N+1)!} \cdot N^\mu.
    \end{aligned}
  \end{equation}
  From \eqref{eq:0505-34} we have
  \begin{equation}\label{eq:0505-45}
    u_M^B-\widehat{\Pi}_N u_M^B=\frac{1}{2 \pi} \int_{|\xi| \leqslant 2 B,|s| \leqslant 2 M} \mathcal{F}\left[u^B\right](\xi)\left(g_{\xi, s}-\widehat{\Pi}_N g_{\xi, s}\right) d \xi d s.
  \end{equation}
  Applying the triangle inequality yields
  \begin{equation}\label{eq:0505-46}
    \left\|u_M^B-\widehat{\Pi}_N u_M^B\right\|_\omega \leqslant \frac{1}{2 \pi} \int_{|\xi| \leqslant 2 B,|s| \leqslant 2 M}\left|\mathcal{F} \left[ u^B \right](\xi)\right|\left\|g_{\xi, s}-\widehat{\Pi}_N g_{\xi,s}\right\|_\omega.
  \end{equation}
  Further applying the Cauchy-Schwarz inequality yields
  \begin{equation}\label{eq:0505-47}
    \begin{aligned}
    \left\|u_M^B-\widehat{\Pi}_N u_M^B\right\|_\omega^2 & \lesssim\left(\int_{|\xi| \leqslant 2 B,|s| \leqslant 2 M}\left|\mathcal{F}\left[u^B\right](\xi)\right|^2 d \xi d s\right) 
    \times\left(\int\left\|g_{\xi, s}-\widehat{\Pi}_N g_{\xi, s}\right\|_\omega^2 d \xi d s\right) \\
    & \lesssim 4 M\|u\|^2 \int_{|\xi| \leqslant 2 B,|s| \leqslant 2 M}\left\|g_{\xi, s}-\widehat{\Pi}_N g_{\xi, s}\right\|_\omega^2.
    \end{aligned}
  \end{equation}
  From \eqref{eq:0505-44} we have
  \begin{equation}\label{eq:0505-48}
    \begin{aligned}
    & \int_{|\xi| \leqslant 2 B,|s| \leqslant 2 M}\left\|g_{\xi, s}-\widehat{\Pi}_N g_{\xi, s}\right\|_\omega^2 d \xi d s \\
    &\lesssim_\mu \int_{|\xi| \leqslant 2 B,|s| \leqslant 2 M} \frac{1}{(N+1)!} \cdot\left(\frac{\xi^2+s^2}{2}\right)^{N+1} \cdot N^\mu d \xi d s \\
    & \leqslant \frac{N^\mu}{(N+1)!} \cdot \int_{\left(\xi^2+s^2\right) \leqslant 4\left(B^2+M^2\right)}\left(\frac{\xi^2+s^2}{2}\right)^{N+1} d \xi d s \\
    &= \frac{N^\mu}{(N+1)!} \cdot 2 \pi \cdot 2^{-(N+1)} \cdot \frac{N^{N+2}}{2 N+4} \cdot\left(\frac{2}{3}\right)^{N+2} \\
    &\lesssim_\mu N^{\mu-\frac{1}{2}} e^{-\frac{N}{12}}.
    \end{aligned}
  \end{equation}
  Putting \eqref{eq:0505-48} back into \eqref{eq:0505-47} we know
  $E_H$ defined in \eqref{eq:0505-9} satisfies:
  \begin{equation}\label{eq:0505-49}
    \begin{aligned}
    E_H&=\left\|u_M^B-\widehat{\Pi}_N u_M^B\right\|_\omega \\
    &\lesssim_\mu e^{-\frac{N}{24}} M^\mu \|u\|.
    \end{aligned}
  \end{equation}
  Combined the estimation of $E_s$ in \eqref{eq:0505-16}, the estimation
  of $E_f$ in \eqref{eq:1111-9}, the estimation of $E_H$ in \eqref{eq:0505-49}
  with \eqref{eq:0505-9}, we prove \eqref{eq:bigthm} under the condition $\beta = 1$.
  As discussed before, by further applying \eqref{eq:0505-2}, we can prove the general
  case when $\beta \neq 1$, this completes our proof.
\end{proof}

Combining Lemma \ref{lem:equiv} with Theorem \ref{thm:bigthm} yields the 
error of the projection onto the space spanned by the first 
$N+1$ generalized Laguerre functions:
\begin{corollary}\label{cor:Laguerre-proj}
  For $v(x)$ defined on $\mathbb{R}_+$, define $u(x)$
  on $\mathbb{R}$ by $u(x)= v(x^2)$. Let $M = \sqrt{\frac{N}{6\beta}},\, B = \sqrt{\frac{\beta N}{6}}$,
  if $u(x) \cdot (1+x^2)^{\frac{\mu}{2}+\frac{1}{4}} \in L^2\left(\mathbb{R}\right)$, then
  \begin{equation}\label{eq:1111-10}
    \begin{aligned}
    \left\|v-\widehat{\Pi}_N^{(\mu, \beta, L)} v\right\|_{x^\mu} & \lesssim_\mu\left\|u \cdot \mathbb{I}_{\{|x|>M\}}\right\|_{|x|^{2 \mu + 1}} \\
    &{}\quad +B^{-\mu}\left\|\mathcal{F}[u](B \xi)\right\|_{H^{\mu+\frac{1}{2}}(\mathbb{R} \backslash [-1,1])} \\
    &{}\quad +e^{-\frac{N}{24}}\left(\|u\|_{|x|^{2 \mu+1}}+M^{\mu+\frac{1}{2}}\|u\|\right).
    \end{aligned}
  \end{equation}
\end{corollary}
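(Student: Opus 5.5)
The plan is to reduce the corollary to Theorem~\ref{thm:bigthm} via Lemma~\ref{lem:equiv}, with the Hermite parameter shifted by one half and the degree doubled. The Laguerre parameter in the corollary is $\mu$, so Lemma~\ref{lem:equiv} (which pairs Laguerre parameter $\alpha$ with Hermite parameter $\alpha+\tfrac12$) is applied with Hermite parameter $\mu' := \mu+\tfrac12$. The Laguerre scaling $\beta$ corresponds to a Hermite scaling $\sqrt{\beta}$. Since $u(x)=v(x^2)$, Lemma~\ref{lem:equiv} gives
\begin{equation*}
\bigl\|v-\widehat{\Pi}_N^{(\mu,\beta,L)}v\bigr\|_{x^\mu}=\bigl\|u-\widehat{\Pi}_{2N}^{(\mu+\frac12,\sqrt{\beta})}u\bigr\|_{|x|^{2\mu+1}}.
\end{equation*}
The hypothesis $u(1+x^2)^{\mu/2+1/4}\in L^2$ is exactly the hypothesis $u(1+x^2)^{\mu'/2}\in L^2$ of Theorem~\ref{thm:bigthm}, and $\mu'\ge 0$ holds whenever $\mu\ge-\tfrac12$. (The corollary implicitly assumes this range; I would state it explicitly.)

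Next I would apply Theorem~\ref{thm:bigthm} with $N$ replaced by $2N$, parameter $\mu'$, scaling $\sqrt\beta$, and weight $\omega=|x|^{2\mu+1}$. The bandwidths become $M=\frac{\sqrt{2N}}{2\sqrt3}/\sqrt\beta=\sqrt{\frac{N}{6\beta}}$ and $B=\sqrt{\frac{\beta N}{6}}$, matching the statement. The three terms translate directly:
\begin{itemize}
\item the spatial term is $\|u\,\mathbb{I}_{\{|x|>M\}}\|_{|x|^{2\mu+1}}$;
\item the frequency term is $B^{-\mu'+\frac12}\|\mathcal F[u](B\xi)\|_{H^{\mu'}}=B^{-\mu}\|\mathcal F[u](B\xi)\|_{H^{\mu+\frac12}(\mathbb R\setminus[-1,1])}$;
\item the spectral term is $e^{-2N/24}\bigl(\|u\|_{|x|^{2\mu+1}}+M^{\mu+\frac12}\|u\|\bigr)$, which is bounded by the stated $e^{-N/24}(\cdots)$ term.
\end{itemize}
The constant depends only on $\mu'$, hence only on $\mu$.

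The only delicate point is to check that the scaling in Lemma~\ref{lem:equiv} matches. That lemma is stated with Hermite scaling $\beta$ and Laguerre scaling $\beta^2$, so one must substitute $\beta\to\sqrt\beta$ correctly. One must also check that the norm identity carries no stray constant factor. Using $x\mapsto x^2$, we have $\int_{\mathbb R}|f(x^2)|^2|x|^{2\mu+1}\,dx = \int_0^\infty |f(t)|^2 t^{\mu}\,dt$: the factor $2$ from the two half-lines cancels the $\tfrac12$ from $dx=dt/(2\sqrt t)$. It also helps to note that even $u$ has only even Hermite modes, so the truncation at index $2N$ (or $2N+1$) captures exactly the Laguerre modes $0,\dots,N$. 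With these checks done, the corollary follows without further estimation.
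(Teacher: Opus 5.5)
Your proposal is correct and follows the paper's own argument, which simply combines Lemma~\ref{lem:equiv} with Theorem~\ref{thm:bigthm}. Your substitutions (Hermite parameter $\mu+\frac12$, Hermite scaling $\sqrt{\beta}$, degree $2N$) reproduce the stated $M$, $B$ and all three error terms, with $e^{-N/12}\le e^{-N/24}$ handling the spectral term, and your remark that $\mu\ge-\frac12$ is needed for Theorem~\ref{thm:bigthm} to apply is a legitimate assumption the paper leaves implicit.
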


Next, we extend the projection error estimates to cover a broader range of cases, including those involving derivatives, interpolation errors, quadrature errors, and error estimates for model equations.

\section{Estimates of various approximation errors}
\label{sec:general}
The objective of this section is to demonstrate that the aforementioned projection error estimates can be extended to a wide range of approximation error estimates.

Since extending the projection error to these results follows a proof framework already established in the literature, we place the lengthy proofs in the appendix to enhance readability. 

\subsection{Projection error with derivatives}
The following theorem shows how to get estimates of projection error with derivatives from the convergence rate of projection approximation, which can be obtained from Theorem \ref{thm:bigthm}.
\begin{theorem}\label{thm:0826-2}
  Assume that
  \begin{equation}\label{eq:0826-18}
    \left\|u-\widehat{\Pi}_N^{(\mu, \beta)} u\right\|_{|x|^{2 \mu}} \lesssim_\mu\left(\int_N^{+\infty}|f(x)|^2 d x\right)^{1 / 2},
  \end{equation}
  then
  \begin{equation}\label{eq:0826-19}
    \left\|\partial_x^l\left(u-\widehat{\Pi}_N^{(\mu, \beta)} u\right)\right\|_{|x|^{2 \mu}} \lesssim_{\mu, l} \beta^l \left(\int_{N}^{+\infty} \left|f(x)\right|^2 \cdot x^l d x\right)^{1 / 2}.
  \end{equation}
\end{theorem}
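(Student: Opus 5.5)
The plan is to work directly with the Hermite coefficients. We then convert the hypothesis \eqref{eq:0826-18} into a bound on weighted tail sums of the coefficients, and handle derivatives one at a time with Lemma~\ref{lem:derivative recurrence}. I read \eqref{eq:0826-18} as holding for every truncation level $K\geqslant N$ with the same $f$; this is how Theorem~\ref{thm:bigthm} produces it. Write $u=\sum_n c_n\,\beta^{\mu+1/2}\widehat{H}_n^{(\mu)}(\beta x)$, normalized so that these functions are orthonormal in $L^2_{|x|^{2\mu}}$. Put $e_K^2:=\sum_{n>K}|c_n|^2=\|u-\widehat{\Pi}_K^{(\mu,\beta)}u\|_{|x|^{2\mu}}^2\lesssim_\mu\int_K^\infty|f|^2$.

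\textbf{Step 1 (Abel summation).} The key intermediate inequality is
\begin{equation*}
\sum_{n>N} n^{l}|c_n|^2 \lesssim_l \int_N^{\infty}|f(x)|^2x^l\,dx .
\end{equation*}
To obtain it, write $n^l=N^l+\sum_{N<k\leqslant n}(k^l-(k-1)^l)$ and exchange the sums. This gives $\sum_{n>N}n^l|c_n|^2=N^le_N^2+\sum_{k>N}(k^l-(k-1)^l)\,e_{k-1}^2$. We then insert the hypothesis, use $k^l-(k-1)^l\lesssim k^{l-1}$, and compare the sum with $\int_N^\infty\bigl(\int_t^\infty|f|^2\bigr)lt^{l-1}dt\leqslant\int_N^\infty|f|^2x^l$.

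\textbf{Step 2 (one derivative).} Since $\partial_x[\widehat{H}_n(\beta x)]=\beta(\partial\widehat{H}_n)(\beta x)$, it suffices to treat $\beta=1$ and then multiply by $\beta$. I would show that for any tail $w=\sum_{n>N}c_n\widehat H_n^{(\mu)}$,
\begin{equation*}
\|\partial_x w\|_{|x|^{2\mu}}^2\lesssim_\mu\sum_{n>N}n|c_n|^2 .
\end{equation*}
The two-neighbour terms in \eqref{eq:0823-6}--\eqref{eq:0823-7} have coefficients $O(\sqrt n)$ and each index receives contributions from at most two $n$. Orthonormality therefore bounds their contribution directly.

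The main obstacle is the extra sum in \eqref{eq:0823-7} for odd $n$, because it spreads onto all even indices $2k\leqslant n-1$, including those below $N$. Set $a_n=\sqrt{((n-1)/2)!/\Gamma(n/2+\mu+1)}\sim n^{-(\mu+1/2)/2}$ and $b_k=\sqrt{\Gamma(k+\mu+\frac12)/k!}\sim k^{(\mu-1/2)/2}$, using Stirling. The coefficient of $\widehat H_{2k}$ produced by this sum is $2\mu\, b_k\sum_{n\text{ odd},\,n>\max(N,2k)}\pm a_nc_n$. Its squared $\ell^2$ norm is bounded by the discrete Hardy inequality
\begin{equation*}
\sum_k k^{p}\Bigl|\sum_{n>2k}x_n\Bigr|^2\lesssim_p\sum_n n^{p+2}|x_n|^2,\qquad p>-1,
\end{equation*}
applied with $p=\mu-\frac12>-1$ (valid since $\mu\geqslant0$) and $x_n=a_nc_n$. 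The right-hand side becomes $\sum_n n^{\mu+3/2}n^{-\mu-1/2}|c_n|^2=\sum_n n|c_n|^2$, which is exactly what is needed. If the alternating signs cause trouble, I would bound by absolute values first; Hardy's inequality is insensitive to this. I expect this step to require the most care.

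\textbf{Step 3 (iteration).} For general $l$, note that $\partial_x w$ is again an expansion whose coefficients $d_m$ satisfy a weighted bound. More precisely, the same argument with weight $n^{j}$ in place of $n^0$ shows $\sum_m m^{j}|d_m|^2\lesssim_{\mu,j}\sum_n n^{j+1}|c_n|^2$. For the Hardy step this uses $p=\mu-\frac12+j$, which is still $>-1$. Applying this $l$ times gives $\|\partial_x^l w\|^2_{|x|^{2\mu}}\lesssim_{\mu,l}\beta^{2l}\sum_{n>N}n^l|c_n|^2$. Combining with Step 1 and taking square roots yields \eqref{eq:0826-19}.
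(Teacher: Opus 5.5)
Your argument is correct, but it organizes the $l$-derivative step differently from the paper. Your Step 2 is essentially the paper's Lemma~\ref{lem:0826-1}. There the non-local part of \eqref{eq:0823-7} is handled by a weighted Cauchy--Schwarz (Schur-type) bound built on $|w_{n,2k+1}|^2\lesssim_\mu (k+1)^{-1/2-\mu}(1+n)^{-1/2+\mu}$, which is the standard proof of your discrete Hardy inequality at $p=\mu-\frac12$.

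The two arguments part ways after that. The paper proves only the unweighted bound $\sum_n|d_n|^2\lesssim_\mu\sum_n(n+1)|c_n|^2$ and then inducts on $l$. It splits the coefficients of $\partial_x^{l-1}(u-\widehat{\Pi}_N u)$ at index $N+l$. The low part is controlled by the induction hypothesis at level $N$. The high part is controlled by Abel summation, which invokes the induction hypothesis at every shifted level $n-l+1$ and uses the fact that $\partial_x^{l-1}\widehat{\Pi}_{n-l+1}u$ involves only indices $\le n$.

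You instead prove the weighted bound $\sum_m(m+1)^j|d_m|^2\lesssim_{\mu,j}\sum_n(n+1)^{j+1}|c_n|^2$ for every $j$, via Hardy with $p=\mu-\frac12+j>-1$. Iterating it gives the $f$-independent estimate $\|\partial_x^l(u-\widehat{\Pi}_Nu)\|_{|x|^{2\mu}}^2\lesssim_{\mu,l}\beta^{2l}\sum_{n>N}n^l|c_n|^2$, after which you apply Abel summation once, to the tail coefficients of $u$ itself. This avoids the shifted-level bookkeeping and the low/high split. The paper's route, in exchange, only ever needs the $j=0$ version of the lemma. Both proofs need \eqref{eq:0826-18} at every truncation level $K\ge N$; you state this explicitly, while the paper uses it implicitly.

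One small fix is needed. After the first derivative, the coefficient sequences are no longer tails: the non-local term and the map $\widehat{H}_0^{(\mu)}\mapsto\widehat{H}_1^{(\mu)}$ populate low indices. So the weights must be $(m+1)^j$, $(n+1)^{j+1}$, and $(k+1)^p$ in the Hardy inequality. With $m^j$ and $n^{j+1}$ the iterated bound fails for $c=\delta_0$: the right side is $0$ while $d_1\neq0$.
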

See Apppendix~\ref{appendix:A} for its proof.

\begin{remark}
  Theorem \ref{thm:0826-2} can be intuitively understood as follows.
  Assume 
  \begin{displaymath}
    u = \sum_n c_n  \beta^{\mu+\frac{1}{2}} \widehat{H}_n^{(\mu)}(\beta x),
  \end{displaymath}
  then 
  \begin{displaymath}
    \bigl\|u-\widehat{\Pi}_N^{(\mu, \beta)} u\bigr\|_{|x|^{2 \mu}} = \sum_n |c_n|^2.
  \end{displaymath}
  Combining with \eqref{eq:0826-18}, it can be assumed $c_n \approx f(n)$.
  Moreover, according to Lemma~\ref{lem:derivative recurrence}, $\partial_x \widehat{H}_n^{(\mu)}(\beta x)$
  can be expressed as a summation containing terms like $\beta \sqrt{n+1} \widehat{H}_{n+1}^{(\mu)}(\beta x)$,
  hence $\partial_x u$ contains terms like $\beta \sqrt{n+1} c_{n+1} \times \left(\beta^{\mu+\frac{1}{2}} \widehat{H}_{n+1}^{(\mu)}(\beta x)\right)$.
  In other words, a single $\partial_x$ amplifies $c_n$ to $\beta \sqrt{n} c_n$, and since
  $c_n \approx f(n)$, this implies that $f(x)$ is amplified to $\beta \sqrt{x} f(x)$. 
  This is why, in \eqref{eq:0826-19}, after applying differentiation 
  $l$ times to the projection error, the error estimate is multiplied by an amplification factor of
  $\left(\beta \sqrt{x}\right)^l$.
\end{remark}

\begin{remark}
  When the projection error exhibits exponential convergence, i.e.,
  when $f(x) = e^{-c|x|^\alpha}$, from \eqref{eq:0826-18} we have 
  \begin{displaymath}
    \left\|u-\widehat{\Pi}_N^{(\mu, \beta)} u\right\|_{|x|^{2 \mu}} \lesssim_\mu e^{-cN^\alpha}.
  \end{displaymath}
  In this case, differentiation does not affect the convergence rate of the error, as can be seen from
  \eqref{eq:0826-19}, 
  \begin{displaymath}
    \left\|\partial_x^l \left(u-\widehat{\Pi}_N^{(\mu, \beta)} u\right)\right\|_{|x|^{2 \mu}} \lesssim_{\mu,\beta,l} e^{-cN^\alpha}.
  \end{displaymath}
  However, when the projection error exhibits only an algebraic convergence rate, i.e.,
  when $f(x) = |x|^{-h}$, from \eqref{eq:0826-18} we have
  \begin{displaymath}
    \left\|u-\widehat{\Pi}_N^{(\mu, \beta)} u\right\|_{|x|^{2 \mu}} \lesssim_\mu N^{-h+\frac{1}{2}}.
  \end{displaymath}
  Under this condition, if $\beta$ is a fixed constant, it follows from 
  \eqref{eq:0826-19} that each additional differentiation reduces the convergence order by
  $\frac{1}{2}$, i.e.,
  \begin{displaymath}
    \left\|\partial_x^l \left(u-\widehat{\Pi}_N^{(\mu, \beta)} u\right)\right\|_{|x|^{2 \mu}} \lesssim_{\mu,\beta,l} N^{-h+\frac{1+l}{2}}.
  \end{displaymath}
  This is consistent with the conclusions from classical theory for functions with finite smoothness, indicating that our results form a more general framework that encompasses the classical theory.
\end{remark}

For the Laguerre case, let $u(x) = v(x^2) = v(y),\, \tilde{\partial}_y = 2\sqrt{y}\partial_y$, thanks to the equivalence
\begin{equation}\label{eq:0627-2}
  \left\|\partial_x^l\left(u-\widehat{\Pi}_{N}^{(\mu,\beta)} u\right)\right\|_{|x|^{2 \mu}}=\left\|\tilde{\partial}_y^l\left(v-\widehat{\Pi}_{N}^{(\mu-\frac{1}{2}, \beta^2, L)} v\right)\right\|_{y^{\mu-\frac{1}{2}}},
\end{equation}
we can obtain the error estimate for $\left\|\tilde{\partial}_y^l\left(v-\widehat{\Pi}_{N}^{(\mu-\frac{1}{2}, \beta^2, L)} v\right)\right\|_{y^{\mu-\frac{1}{2}}}$
by combining Theorem \ref{thm:0826-2} and \eqref{eq:0627-2}.
\begin{corollary}
  Assume that
  \begin{equation}\label{eq:1111-11}
    \left\|v-\widehat{\Pi}_N^{(\mu, \beta, L)} v\right\|_{x^{\mu}} \lesssim_\mu\left(\int_{2N}^{+\infty}|f(x)|^2 d x\right)^{1 / 2}.
  \end{equation}  
  Let $\tilde{\partial}_x = 2\sqrt{x}\partial_x$, then
  \begin{equation}\label{eq:1111-12}
    \left\|\tilde{\partial}_x^l\left(v-\widehat{\Pi}_N^{(\mu, \beta, L)} v\right)\right\|_{x^{\mu}} \lesssim_{\mu, l} 
    \beta^{\frac{l}{2}}\left(\int_{2N}^{+\infty} \left|f(x)\right|^2 \cdot x^l d x\right)^{1 / 2}.
  \end{equation}
\end{corollary}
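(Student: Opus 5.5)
The plan is to reduce the corollary to Theorem~\ref{thm:0826-2} through the change of variables $y\mapsto x=\sqrt{y}$ used in Lemma~\ref{lem:equiv} and in \eqref{eq:0627-2}. Given $v$ on $\mathbb{R}_+$, set $u(x)=v(x^2)$ on $\mathbb{R}$. Take the Hermite parameter $\mu'=\mu+\frac12$, so that the Laguerre index $\mu'-\frac12$ equals $\mu$. Take the Hermite scaling $\beta'=\sqrt{\beta}$, so that $\beta'^2=\beta$.

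\textbf{Step 1: rewrite the hypothesis.} By Lemma~\ref{lem:equiv}, with $2N$ Hermite modes,
\begin{displaymath}
\left\|u-\widehat{\Pi}_{2N}^{(\mu',\beta')}u\right\|_{|x|^{2\mu'}}=\left\|v-\widehat{\Pi}_N^{(\mu,\beta,L)}v\right\|_{x^{\mu}}.
\end{displaymath}
Assumption \eqref{eq:1111-11} therefore says exactly that the Hermite projection error with $2N$ modes is $\lesssim_\mu\bigl(\int_{2N}^{\infty}|f|^2\bigr)^{1/2}$. This is hypothesis \eqref{eq:0826-18} with $N$ replaced by $2N$. The implicit constant depends on $\mu'$, which is determined by $\mu$.

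\textbf{Step 2: apply Theorem~\ref{thm:0826-2}.} Applying it with $2N$ in place of $N$ gives
\begin{displaymath}
\left\|\partial_x^l\left(u-\widehat{\Pi}_{2N}^{(\mu',\beta')}u\right)\right\|_{|x|^{2\mu'}}\lesssim_{\mu,l}\beta'^{\,l}\left(\int_{2N}^{\infty}|f(x)|^2x^l\,dx\right)^{1/2},
\end{displaymath}
and here $\beta'^{\,l}=\beta^{l/2}$.

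\textbf{Step 3: transfer back to $v$ via \eqref{eq:0627-2}.} Since $x=\sqrt{y}$, we have $\partial_x=2\sqrt{y}\,\partial_y=\tilde{\partial}_y$. The weights match because $|x|^{2\mu'}\,dx=y^{\mu+\frac12}\,\frac{dy}{2\sqrt{y}}=\frac12 y^{\mu}\,dy$, and evenness of $u$ gives the other factor of $2$. Hence the left-hand side equals $\bigl\|\tilde\partial_y^l\bigl(v-\widehat{\Pi}_N^{(\mu,\beta,L)}v\bigr)\bigr\|_{y^{\mu}}$. Renaming $y$ as $x$ yields \eqref{eq:1111-12}.

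\textbf{Main obstacle.} The delicate point is the bookkeeping in \eqref{eq:0627-2}: it must hold with the projection $\widehat{\Pi}_{2N}$ (not $\widehat{\Pi}_N$) on the Hermite side, for the index shift $\mu'=\mu+\frac12$ and the scaling $\beta'=\sqrt\beta$.

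This requires that the Hermite projection of an even function keeps only even modes. By \eqref{eq:relaion-Laguerre-Hermite}, these are the functions $\widehat{\mathscr L}_k^{(\mu)}(\beta x^2)$ with $k\le N$, so $\widehat{\Pi}_{2N}^{(\mu',\beta')}u(x)=\bigl(\widehat{\Pi}_N^{(\mu,\beta,L)}v\bigr)(x^2)$.

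The identity is then applied to the difference $u-\widehat{\Pi}_{2N}u$, which is a function of $x^2$. Because $\partial_x$ acting on functions of $x^2$ is exactly $\tilde\partial_y$, iterating $l$ times is legitimate, and the norms agree up to an absolute constant by the weight computation in Step 3.
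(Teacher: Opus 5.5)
Your proposal is correct and is exactly the paper's argument: apply Theorem~\ref{thm:0826-2} at Hermite index $2N$ with parameters $\mu+\frac12$ and $\sqrt{\beta}$, then transfer back via \eqref{eq:0627-2}. Your index bookkeeping is in fact more careful than the paper's display, which writes $\widehat{\Pi}_N$ on both sides.

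Neither you nor the paper spells out two small points. First, the proof of Theorem~\ref{thm:0826-2} uses its hypothesis at every index $n\geq N$ in the Abel summation, whereas here it is only given at even indices; this is harmless, because $\widehat{\Pi}_{2k+1}u=\widehat{\Pi}_{2k}u$ for even $u$, so odd indices inherit the preceding even-index bound and the summation still closes with lower limit $2N$. Second, $\partial_x^l u(x)=(\tilde{\partial}_y^l v)(x^2)$ holds on $x>0$, and the half-line $x<0$ follows from the parity of $\partial_x^l u$, not from $\partial_x u$ again being a function of $x^2$ (it is odd).
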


\subsection{Interpolation error and quadrature error}
\label{sec: interpolation and quadrature}
We begin by defining generalized Laguerre and generalized Hermite interpolation.
Let $\xi_{G, N, j}^{(\mu, \beta)}$ and $\xi_{R, N, j}^{(\mu, \beta)},\, 0\leqslant j \leqslant N$,
be the zeros of $\widehat{\mathscr{L}}^{(\mu)}_{N+1}(\beta x)$ and
$x\widehat{\mathscr{L}}_N^{(\mu+1)}(\beta x)$, respectively. 
They are arranged in ascending order. Then we can define Laguerre--Gauss and Laguerre--Gauss--Radau interpolation operator $\widehat{I}_{Z, N}^{(\mu,\beta)}: C[0,+\infty) \rightarrow \widehat{P}_N^{\beta,L}$,
where $Z=G, R$ represents Laguerre--Gauss or Laguerre--Gauss--Radau as ($\widehat{P}_N^{\beta,L}$ is defined in \eqref{eq:def-PNHat-L})
\begin{equation}\label{eq:Laguerre interpolation def}
  \left(\widehat{I}_{Z, N}^{(\mu,\beta)} u\right) \left(\xi_{Z, N, j}^{(\mu, \beta)}\right) = u\left(\xi_{Z, N, j}^{(\mu, \beta)}\right), \quad 0\leqslant j \leqslant N.
\end{equation}
Denote $\widehat{\omega}_{Z, N, j}^{(\mu, \beta)}, 0 \leqslant j \leqslant N, Z = G,R$ the corresponding
quadrature weights such that
\begin{equation}\label{Laguerre weights def}
  \int_{0}^\infty \phi(x) x^\mu d x=\sum_{j=0}^N \phi\left(\xi_{Z, N, j}^{(\mu, \beta)}\right) \widehat{\omega}_{Z, N, j}^{(\mu, \beta)} \quad \forall \phi \in \widehat{P}_{2 N+\lambda_Z}^{\beta, L},
\end{equation}
where $\lambda_z = 1$ and $0$ for $Z=G$ and $R$ respectively.
Similarly, we can define $\omega_{Z, N, j}^{(\mu, \beta)}, 0 \leqslant j \leqslant N, Z = G,R$ such that
\begin{equation}\label{Laguerre poly weights def}
  \int_{0}^\infty \phi(x) e^{-\beta x} x^\mu d x=\sum_{j=0}^N \phi\left(\xi_{Z, N, j}^{(\mu, \beta)}\right) \omega_{Z, N, j}^{(\mu, \beta)} \quad \forall \phi \in P_{2 N+\lambda_Z},
\end{equation}
where $\lambda_z = 1$ and $0$ for $Z=G$ and $R$ respectively, $P_k$ denotes polynomials of degree at most $k$.
From \eqref{Laguerre weights def} and \eqref{Laguerre poly weights def}, it is easy to see
\begin{equation}\label{eq:0903-12}
  \widehat{\omega}_{Z, N, j}^{(\mu, \beta)}=e^{\beta \xi_{Z, N, j}^{(\mu, \beta)}} \omega_{Z, N, j}^{(\mu, \beta)}.
\end{equation}

As for the generalized Hermite interpolation, let $\left\{x_{N,j}^{(\mu,\beta)}\right\}_{j=0}^N$
be the zeros of $\widehat{H}_{N+1}^{(\mu)}(\beta x)$ which are arranged in ascending order. 
We define the Hermite interpolation operator as
$\widehat{I}_{N}^{(\mu,\beta)}: C(-\infty,+\infty) \rightarrow \widehat{P}_N^{\beta,H}$,
where $\widehat{P}_N^{\beta,H}$ is defined in \eqref{eq:def-PNHat-H},
such that
\begin{equation}\label{eq:0710-3}
  \left(\widehat{I}_{N}^{(\mu,\beta)} u\right)\left(x_{N,j}^{(\mu,\beta)}\right)=u\left(x_{N,j}^{(\mu,\beta)}\right), \quad 0 \leqslant j \leqslant N.
\end{equation}
Denote $\widehat{\omega}_{N, j}^{(\mu, \beta)}, 0 \leqslant j \leqslant N, Z = G,R$ the corresponding
quadrature weights such that
\begin{equation}\label{Hermite weights def}
  \int_{-\infty}^\infty \phi(x) |x|^{2\mu} d x=\sum_{j=0}^N \phi\left(x_{N, j}^{(\mu, \beta)}\right) \widehat{\omega}_{N, j}^{(\mu, \beta)} \quad \forall \phi \in \widehat{P}_{2 N+1}^{\beta, H}.
\end{equation}

We now present the interpolation error estimates. The next theorem
shows that the generalized Hermite interpolation error can be controlled by
the projection error in some way.

\begin{theorem}\label{thm:0810-1}
  Let $\omega = |x|^{2\mu}$, for $\widehat{I}_{2 N}^{(\mu, \beta)}$, we have
  \begin{equation}\label{eq:0810-12}
    \begin{aligned}
    \left\|u-\widehat{I}_{2 N}^{(\mu, \beta)} u\right\|_{\omega} 
    & \lesssim_\mu N^{\frac{1}{2}}\left\|u-\widehat{\Pi}_{2 N}^{(\mu, \beta)} u\right\|_{\omega}+ \beta^{-1}\left\|\partial\left(u-\widehat{\Pi}_{2 N}^{(\mu, \beta)} u\right)\right\|_{\omega} \\
    &{}\quad +(\beta \sqrt{N})^{-\mu-\frac{1}{2}}\left|u(0)-\left(\widehat{\Pi}_{2 N}^{(\mu, \beta)} u\right)(0)\right|.
    \end{aligned}
  \end{equation}
  For $\widehat{I}_{2 N + 1}^{(\mu, \beta)}$, we have
  \begin{equation}\label{eq:0810-13}
    \left\|u-\widehat{I}_{2 N+1}^{(\mu, \beta)} u\right\|_{\omega} \lesssim_\mu N^{\frac{1}{2}}\left\|u-\widehat{\Pi}_{2 N+1}^{(\mu, \beta)} u\right\|_{\omega}+ \beta^{-1} \left\|\partial\left(u-\widehat{\Pi}_{2 N+1}^{(\mu, \beta)} u\right)\right\|_{\omega}.
  \end{equation}
\end{theorem}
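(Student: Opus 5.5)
Following the classical route for interpolation estimates (as in Shen--Tang--Wang), I would use the projection as an intermediate object. Since $\widehat{I}_{K}^{(\mu,\beta)}$ reproduces $\widehat{P}_K^{\beta,H}$, we have $u-\widehat{I}_K u=(u-\widehat{\Pi}_K u)-\widehat{I}_K(u-\widehat{\Pi}_K u)$ for $K=2N$ or $2N+1$. It therefore suffices to bound $\|\widehat{I}_K w\|_\omega$ with $w=u-\widehat{\Pi}_K u$. By scaling $x\mapsto x/\beta$, exactly as in \eqref{eq:0505-2}, I reduce to $\beta=1$: every term in \eqref{eq:0810-12}--\eqref{eq:0810-13} picks up the same factor $\beta^{-\mu-1/2}$, and this also accounts for the $\beta^{-1}$ and $(\beta\sqrt N)^{-\mu-1/2}$ factors.

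For $\beta=1$, exactness of the Gauss quadrature \eqref{Hermite weights def} on $\widehat{P}_{2K+1}^{1,H}$ gives
\[
\|\widehat{I}_K w\|_\omega^2=\sum_{j=0}^K w(x_{K,j})^2\,\widehat{\omega}_{K,j},
\]
since $(\widehat I_K w)^2\in\widehat{P}_{2K}^{1,H}$ up to the Gaussian factor. The problem is now a discrete-to-continuous (Sobolev-type) inequality for the sampled values. I would need the asymptotics of nodes and weights. Using \eqref{eq:relaion-Laguerre-Hermite}, the nodes are $\pm\sqrt{}$ of Laguerre--Gauss nodes, with spacing $\Delta_j\sim K^{-1/2}$ in the bulk by \eqref{eq:shen2011-7.42}--\eqref{eq:shen2011-7.44}. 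For the weights I expect $\widehat\omega_{K,j}\sim |x_{K,j}|^{2\mu}\Delta_j$, which follows from the Laguerre weight asymptotics for the functions (Shen--Tang--Wang Thm 7.? analog), up to the usual behaviour near the turning point $\sqrt{4K}$.

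The key step is a one-dimensional inequality on each cell $I_j$ around $x_{K,j}$, of length $\sim\Delta_j$:
\[
w(x_j)^2\Delta_j|x_j|^{2\mu}\lesssim \int_{I_j}w^2|x|^{2\mu}\,dx+\Delta_j^2\int_{I_j}(w')^2|x|^{2\mu}\,dx .
\]
This comes from $w(x_j)^2\le 2\,\Delta_j^{-1}\!\int_{I_j}w^2+2\Delta_j\!\int_{I_j}(w')^2$, with $|x|^{2\mu}$ comparable to $|x_j|^{2\mu}$ on $I_j$ whenever $I_j$ stays away from $0$. Summing with $\Delta_j\gtrsim K^{-1/2}$ in the bulk would give $\|w\|_\omega+K^{-1/2}\|w'\|_\omega$. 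Near the largest nodes the spacing shrinks to $\sim K^{-1/6}$ but the weights shrink as well, and I would control this with the weight asymptotics. The statement allows the cruder factor $N^{1/2}\|w\|_\omega+\|\partial w\|_\omega$, so I will not try to optimize: taking $\Delta_j$ uniformly bounded above by $1$ and below by $K^{-1/2}$, the $\Delta_j^{-1}$ from the first term yields the $N^{1/2}$ factor, and the second term yields $\|\partial w\|_\omega$. This slack is what makes the turning-point region harmless.

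The main obstacle is the cell containing or nearest $0$, where $|x|^{2\mu}$ is not comparable to a constant. For $K=2N+1$ (an odd number of nodes) the node $x=0$ is a zero of $\widehat H_{2N+1}$, a polynomial with $x$ factor, and its weight vanishes when $\mu>0$. Alternatively, the nodes nearest $0$ sit at distance $\sim K^{-1/2}$, where a weighted Hardy-type estimate $w(x_j)^2|x_j|^{2\mu+1}\lesssim\int_0^{2x_j}(w^2+x^2w'^2)|x|^{2\mu}$ can be applied; this explains why \eqref{eq:0810-13} has no point-value term. For $K=2N$ the nodes are $\pm$ Laguerre nodes of parameter $\mu-1/2$. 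The smallest node is $\sim N^{-1/2}$ with weight $\sim N^{-\mu-1/2}$, and I cannot absorb it without control at $0$, so I will write $w(x_0)=w(0)+\int_0^{x_0}w'$. The first piece gives exactly $(\sqrt N)^{-\mu-1/2}|w(0)|$. For the integral piece I will use Cauchy--Schwarz against $|x|^{-2\mu}$, which is integrable only for $\mu<1/2$. For larger $\mu$ I would instead compare with a node-free cell at distance $\sim N^{-1/2}$, which is the delicate point I expect to need the most care.
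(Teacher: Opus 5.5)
Your overall architecture matches the paper's proof in Appendix~\ref{appendix:B}: split off $e=u-\widehat\Pi_K u$, use quadrature exactness, use the weights $\widehat\omega_{K,j}\sim|x_j|^{2\mu}(|x_j|-|x_{j'}|)$ (Lemma~\ref{lem:0810-3}), and apply a local Sobolev inequality. The genuine gap is your treatment of the origin, because you have the two parities reversed. $\widehat I_K$ interpolates at the $K+1$ zeros of $\widehat H^{(\mu)}_{K+1}(\beta x)$, as your own sum $\sum_{j=0}^K$ indicates.

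\begin{itemize}
\item For $K=2N$ the nodes are the zeros of $H^{(\mu)}_{2N+1}\propto x\,\mathscr L^{(\mu+1/2)}_N(x^2)$, so $x_{2N,N}=0$ is a node.
\item For $K=2N+1$ the nodes are the zeros of $H^{(\mu)}_{2N+2}\propto\mathscr L^{(\mu-1/2)}_{N+1}(x^2)$, i.e.\ $\pm$ square roots of Laguerre--Gauss nodes with parameter $\mu-\frac12$, and $0$ is not a node.
\end{itemize}

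Moreover, your claim that the weight at the origin vanishes for $\mu>0$ is false, since Gauss weights are positive. Via $\widehat\omega_{2N,N}=\widehat\omega^{(\mu-1/2,\beta^2)}_{R,N,0}$ and the Radau weight formula,
\[
\widehat\omega_{2N,N}=\frac{(\mu+\frac12)\Gamma^2(\mu+\frac12)\Gamma(N+1)}{\beta^{2\mu+1}\Gamma(N+\mu+\frac32)}\sim_\mu(\beta\sqrt N)^{-2\mu-1}.
\]
The single term $\widehat\omega_{2N,N}|e(0)|^2$ is exactly the point-value term in \eqref{eq:0810-12}. It cannot in general be discarded, because for $\mu\ge\frac12$ the weighted $H^1$ norm does not control $e(0)$. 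Your plan, applied to the actual $K=2N$ node set, drops it.

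Conversely, in both cases the nonzero nodes nearest $0$ lie at distance $\sim(\beta\sqrt N)^{-1}$ and carry weights $\sim|x_j|^{2\mu+1}$. They are absorbed by the Sobolev inequality on an interval to the right of $x_j$, where $|x|^{2\mu}\ge|x_j|^{2\mu}$ for $\mu\ge0$; the paper uses $[x_j,x_j+(\beta\sqrt N)^{-1}]$. This is just your own Hardy-type estimate restricted to $[x_j,2x_j]$. So your claim that the smallest node ``cannot be absorbed without control at $0$'' is wrong. The step you flag as delicate, bounding $\int_0^{x_0}e'$ when $\mu\ge\frac12$, is both unresolved and unnecessary. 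Applied to the real $K=2N+1$ node set, it would even produce a spurious $|e(0)|$ term that \eqref{eq:0810-13} does not contain.

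Once the node sets are identified correctly, the proof closes as in the paper:
\begin{itemize}
\item The origin node, present only for $K=2N$, gives the $(\beta\sqrt N)^{-\mu-\frac12}|e(0)|$ term directly.
\item All other nodes are handled uniformly. Bound $\sup_j|x_j|^{2\mu}|e(x_j)|^2\lesssim\beta\sqrt N\|e\|_\omega^2+(\beta\sqrt N)^{-1}\|e'\|_\omega^2$, then telescope $\sum_j(|x_j|-|x_{j'}|)=x_{\max}\lesssim\sqrt N/\beta$. This is where the $N^{1/2}$ and $\beta^{-1}$ factors come from, so your finer cellwise summation is not needed.
\end{itemize}
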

See Appendix~\ref{appendix:B} for its proof.

A further treatment of $\Bigl\|\partial\bigl(u-\widehat{\Pi}_{2 N}^{(\mu, \beta)} u\bigr)\Bigr\|_{\omega}$
and $\Bigl|u(0)-\bigl(\widehat{\Pi}_{2 N}^{(\mu, \beta)} u\bigr)(0)\Bigr|$ in Theorem~\ref{thm:0810-1} yields the following result.
\begin{theorem}\label{thm:0916-1}
  Let $\omega = |x|^{2\mu}$. Assume that
  \begin{equation}\label{eq:0916-1}
    \left\|u-\widehat{\Pi}_N^{(\mu, \beta)} u\right\|_{\omega} \lesssim_\mu\left(\int_N^{+\infty} \left|f(x)\right|^2 d x\right)^{1 / 2}.
  \end{equation}
  Then, for $\widehat{I}_{2 N}^{(\mu, \beta)} u$, we have
  \begin{equation}\label{eq:0916-2}
    \left\|u-\widehat{I}_{2 N}^{(\mu, \beta)} u\right\|_\omega \lesssim_\mu\left(\int_{2 N}^{+\infty} \left|f(x)\right|^2 \cdot \left(x + \beta^{-2\mu}\left(\frac{x}{N}\right)^{\mu+1}\right) d x\right)^{1 / 2},
  \end{equation}
  for $\widehat{I}_{2 N+1}^{(\mu, \beta)} u$, we have
  \begin{equation}\label{eq:0916-3}
    \left\|u-\widehat{I}_{2 N+1}^{(\mu, \beta)} u\right\|_\omega \lesssim_\mu \quad\left(\int_{2 N+1}^{+\infty} \left|f(x)\right|^2 \cdot x d x\right)^{1 / 2}.
  \end{equation}
\end{theorem}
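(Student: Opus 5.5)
We start from Theorem~\ref{thm:0810-1} and bound its three terms one by one, using the assumed projection bound \eqref{eq:0916-1} with $N$ replaced by $2N$ (or $2N+1$). Write $e = u-\widehat{\Pi}_{2N}^{(\mu,\beta)}u$.

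\textbf{First term.} We have $\|e\|_\omega \lesssim_\mu (\int_{2N}^\infty |f|^2)^{1/2}$. Since $x\geqslant 2N$ on the range of integration, the extra factor $N^{1/2}$ can be absorbed into the integrand:
\begin{displaymath}
N^{\frac12}\|e\|_\omega \lesssim_\mu \Bigl(\int_{2N}^\infty |f(x)|^2\, x\, dx\Bigr)^{1/2}.
\end{displaymath}

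\textbf{Derivative term.} Apply Theorem~\ref{thm:0826-2} with $l=1$ at level $2N$. This gives $\|\partial e\|_\omega \lesssim_\mu \beta(\int_{2N}^\infty |f|^2 x\,dx)^{1/2}$. After multiplying by $\beta^{-1}$ it is again the $x$-weighted integral.

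This already settles \eqref{eq:0916-3}, since for $\widehat{I}_{2N+1}^{(\mu,\beta)}$ only these two terms appear in \eqref{eq:0810-13}.

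\textbf{Point value at the origin.} This is the main obstacle, because the assumption controls coefficients only in the weighted $L^2$ sense.

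\emph{Reduction to coefficients.} Expand $e=\sum_{n>2N} c_n \beta^{\mu+\frac12}\widehat{H}_n^{(\mu)}(\beta x)$ with $\sum_{n>K}|c_n|^2 \lesssim \int_K^\infty |f|^2$ for every $K\geqslant 2N$; the bound at level $K$ comes from \eqref{eq:0916-1}, and the paper treats $c_n\approx f(n)$ in this sense. Only even $n=2m$ contribute at $x=0$. By \eqref{eq:relaion-Laguerre-Hermite}, $\mathscr{L}_m^{(\mu-1/2)}(0)=\Gamma(m+\mu+\frac12)/(m!\,\Gamma(\mu+\frac12))$, and combined with the normalization \eqref{eq:gamma-n-H} and Stirling this gives
\begin{displaymath}
|\widehat{H}_{2m}^{(\mu)}(0)| \asymp_\mu m^{\frac{\mu}{2}-\frac14} \asymp n^{\frac{\mu}{2}-\frac14}.
\end{displaymath}
Hence $|e(0)|\lesssim_\mu \beta^{\mu+\frac12}\sum_{n>2N}|c_n|\, n^{\frac{\mu}{2}-\frac14}$.

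\emph{Weighted Cauchy--Schwarz.} Write $|c_n| n^{\frac{\mu}{2}-\frac14} = \bigl(|c_n| n^{\frac{\mu+1}{2}}\bigr)\cdot n^{-\frac34}$. Cauchy--Schwarz then gives
\begin{displaymath}
|e(0)|\lesssim_\mu \beta^{\mu+\frac12}\Bigl(\sum_{n>2N}|c_n|^2 n^{\mu+1}\Bigr)^{1/2}\Bigl(\sum_{n>2N} n^{-3/2}\Bigr)^{1/2}.
\end{displaymath}
The second factor is $\lesssim N^{-1/4}$. For the first, Abel summation (summation by parts against the tail bounds $\sum_{n>K}|c_n|^2\lesssim\int_K^\infty|f|^2$) turns it into $\int_{2N}^\infty |f(x)|^2 x^{\mu+1}dx$ up to constants, exactly as in the proof of Theorem~\ref{thm:0826-2}.

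\emph{Collecting powers.} Multiplying by the prefactor $(\beta\sqrt N)^{-\mu-\frac12}$ from Theorem~\ref{thm:0810-1}, the powers of $\beta$ cancel:
\begin{displaymath}
(\beta\sqrt N)^{-\mu-\frac12}\,|e(0)| \lesssim_\mu N^{-\frac{\mu}{2}-\frac12}\Bigl(\int_{2N}^\infty |f(x)|^2\, x^{\mu+1}\,dx\Bigr)^{1/2}.
\end{displaymath}
This matches the target term $\beta^{-2\mu}(x/N)^{\mu+1}$ in \eqref{eq:0916-2} only up to the $\beta$-power and a possible slack in $N$. So I expect to have to redo the bookkeeping carefully. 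First I would recheck the scaling normalization of the expansion, i.e.\ whether the coefficients carry $\beta^{\mu+\frac12}$ or the functions are normalized otherwise; this may produce the $\beta^{-2\mu}$. If needed I would also sharpen the Cauchy--Schwarz split, choosing the weights so that the remaining sum is of size $N^{-1/2}$ and the $N$-exponent becomes $-(\mu+1)$.

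Finally, summing the three bounds gives \eqref{eq:0916-2}.
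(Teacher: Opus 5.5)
Your argument is the paper's argument step for step: the three terms of Theorem~\ref{thm:0810-1}, absorbing $N^{1/2}$ into the weight $x$, Theorem~\ref{thm:0826-2} with $l=1$, $|\widehat{H}_{2m}^{(\mu)}(0)|\sim m^{\mu/2-1/4}$, Cauchy--Schwarz against $n^{-3/2}$, and Abel summation. Everything before your final paragraph is correct, including \eqref{eq:0916-3}. The discrepancy you flag at the end is not a gap in your proof, and neither proposed repair is needed. There is no slack in $N$: squaring your bound gives $N^{-(\mu+1)}\int_{2N}^{\infty}|f(x)|^2x^{\mu+1}\,dx=\int_{2N}^{\infty}|f(x)|^2(x/N)^{\mu+1}\,dx$, which is exactly the $N$-dependence in \eqref{eq:0916-2}. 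The bound $\sum_{n>2N}n^{-3/2}\lesssim N^{-1/2}$ is already the one you used, so do not re-split the Cauchy--Schwarz step. Nor will rechecking the normalization produce $\beta^{-2\mu}$: $\beta^{\mu+\frac12}\widehat{H}_n^{(\mu)}(\beta x)$ is the correct orthonormal basis of $L^2_\omega$, and your cancellation of $\beta$ is right.

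The factor $\beta^{-2\mu}$ in the statement comes from a slip in the paper's proof at \eqref{eq:0916-9}. There the coefficients refer to the unnormalized $\widehat{H}_n^{(\mu)}(\beta x)$, whose squared $\omega$-norm is $\beta^{-2\mu-1}$. The tail bound should therefore be $\sum_{n>N}|c_n|^2\lesssim\beta^{2\mu+1}\int_N^{\infty}|f|^2$, not $\beta\int_N^\infty|f|^2$. With this correction, \eqref{eq:0916-15} becomes $|e(0)|\lesssim N^{-1/4}\beta^{\mu+\frac12}\bigl(\int_{2N}^\infty|f|^2x^{\mu+1}\,dx\bigr)^{1/2}$, and the powers of $\beta$ cancel exactly as in your computation. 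The same slip in \eqref{eq:0826-25} and \eqref{eq:0826-35} is harmless, because it cancels there.

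A scaling check confirms this. Set $v(y)=u(y/\beta)$. Both the projection and interpolation errors of $u$ equal $\beta^{-\mu-\frac12}$ times those of $v$ at $\beta=1$. So any valid weight transfers to $\beta=1$ for every $\beta$ at once, and the printed $\beta^{-2\mu}$ would let you delete the origin term entirely. That is false for $\mu>3/2$:
\begin{itemize}
\item Take $u=\widehat{H}_{2M}^{(\mu)}(\beta x)$ and $|f|^2=\beta^{-2\mu-1}\mathbb{I}_{[2M-1,2M]}$ with $M\gg N$; this $f$ satisfies \eqref{eq:0916-1} at every level.
\item Then $\|u-\widehat{I}_{2N}^{(\mu,\beta)}u\|_\omega\geqslant(\widehat{\omega}_{2N,N}^{(\mu,\beta)})^{1/2}|u(0)|-\|u\|_\omega\geqslant\beta^{-\mu-\frac12}\bigl(c_\mu N^{-\frac{\mu}{2}-\frac14}M^{\frac{\mu}{2}-\frac14}-1\bigr)$.
\item Meanwhile, $\beta^{\mu+\frac12}$ times the printed right-hand side is at most $\bigl(2M+\beta^{-2\mu}(2M/N)^{\mu+1}\bigr)^{1/2}\to\sqrt{2M}$ as $\beta\to\infty$.
\item Since $\frac{\mu}{2}-\frac14>\frac12$, this is a contradiction for large $M$.
\end{itemize}
So your argument proves \eqref{eq:0916-2} with weight $x+(x/N)^{\mu+1}$. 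That implies the printed bound only when $\beta\leqslant1$ (for $\mu\geqslant0$). State this corrected form as your conclusion rather than trying to recover the printed factor.
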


\begin{remark}
  Similar to classical theory, the interpolation error includes an additional factor compared to the projection error, which stems from the stability estimate of interpolation.
  The results we present here are more general. The classical theory only addresses target functions with finite smoothness, which corresponds to $f(x) = |x|^{-h}$.
  In this case, from \eqref{eq:0916-1} we have
  \begin{displaymath}
    \left\|u-\widehat{\Pi}_N^{(\mu, \beta)} u\right\|_{\omega} \lesssim_\mu N^{-h+\frac{1}{2}}.
  \end{displaymath}
  For fixed $\beta$, applying \eqref{eq:0916-2} and \eqref{eq:0916-3} yields
  \begin{displaymath}
    \left\|u-\widehat{I}_{N}^{(\mu, \beta)} u\right\|_\omega \lesssim_{\mu,\beta} N^{-h+1}.
  \end{displaymath}
  That is, the estimate of the interpolation error includes an additional factor 
  $N^{\frac{1}{2}}$ compared to that of the projection error.
  This aligns with the classical results~\cite{aguirre_hermite_2005}, although we do not focus on minimizing this additional factor as much as possible here.
  Because our primary focus is on extending the new projection error estimates to the interpolation error case, 
  thereby demonstrating that interpolation error exhibits behavior similar to projection error: both can be controlled by spatial/frequency truncation error and spectral error.
\end{remark}

\begin{proof}
  To prove \eqref{eq:0916-2}, we need to handle the three terms on the right-hand side of \eqref{eq:0810-12}.
  For the first term, from \eqref{eq:0916-1} we know
  \begin{equation}\label{eq:0916-5}
    N^{\frac{1}{2}}\left\|u-\widehat{\Pi}_{2 N}^{(\mu, \beta)} u\right\|_\omega \lesssim_\mu \quad\left(\int_{2 N}^{+\infty} \left|f(x)\right|^2 \cdot x d x\right)^{1 / 2}.
  \end{equation}
  For the second term, applying Theorem \ref{thm:0826-2} yields
  \begin{equation}\label{eq:0916-6}
    \beta^{-1}\left\|\partial\left(u-\widehat{\Pi}_{2 N}^{(\mu, \beta)} u\right)\right\|_\omega \lesssim_\mu \quad\left(\int_{2 N}^{+\infty} \left|f(x)\right|^2 \cdot x d x\right)^{1 / 2}.
  \end{equation}
  For the third term, let $u(x) = \sum_n c_n \widehat{H}_n^{(\mu)}(\beta x)$,
  then
  \begin{equation}\label{eq:0916-6.5}
    u(0)-\left(\widehat{\Pi}_{2 N}^{(\mu, \beta)} u\right)(0) = \sum_{n > N} c_{2n} \widehat{H}_{2n}^{(\mu)}(0).
  \end{equation}
  Applying equation (7.6) of~\cite{shen2011spectral}, \eqref{eq:relaion-Laguerre-Hermite} and \eqref{eq:Hermite-func} yields
  \begin{equation}\label{eq:0916-7}
    \widehat{H}_{2 n}^{(\mu)}(0)=\frac{(-1)^n}{\Gamma\left(\mu+\frac{1}{2}\right)} \sqrt{\frac{\Gamma\left(n+\mu+\frac{1}{2}\right)}{n!}} \sim_\mu n^{\frac{1}{2}\left(\mu-\frac{1}{2}\right)}.
  \end{equation}
  Hence
  \begin{equation}\label{eq:0916-8}
    \begin{aligned}
    \left|u(0)-\bigl(\widehat{\Pi}_{2 N}^{(\mu, \beta)} u\right)(0)\bigr|^2 
    & =\Bigl|\sum_{n>N} c_{2 n} \widehat{H}_{2 n}^{(\mu)}(0)\Bigr|^2 \\
    & \lesssim_\mu\Bigl(\sum_{n>N}\left|c_{2 n}\right|^2 \cdot n^{\mu+1}\Bigr)\Bigl(\sum_{n>N} n^{-\frac{3}{2}}\Bigr) \\
    & \lesssim_\mu N^{-\frac{1}{2}} \sum_{n > 2N}\left|c_n\right|^2 \cdot n^{\mu+1}.
    \end{aligned}
  \end{equation}
  Combining \eqref{eq:0916-1} with $u(x) = \sum_n c_n \widehat{H}_n^{(\mu)}(\beta x)$ we have
  \begin{equation}\label{eq:0916-9}
    \sum_{n>N}\left|c_n\right|^2 \lesssim_\mu \beta \cdot \int_N^{+\infty} \left|f(x)\right|^2 d x.
  \end{equation}
  By applying Abel's transformation and \eqref{eq:0916-9}, one can prove that
  \begin{equation}\label{eq:0916-10}
    \sum_{n>2 N}\left|c_n\right|^2 \cdot n^{\mu+1} \lesssim_\mu \beta \cdot \int_{2 N}^{+\infty} \left|f(x)\right|^2 \cdot x^{\mu+1} d x.
  \end{equation}
  Substituting \eqref{eq:0916-10} back into \eqref{eq:0916-8} yields
  \begin{equation}\label{eq:0916-15}
    \Bigl|u(0)-\left(\widehat{\Pi}_{2 N}^{(\mu, \beta)} u\right)(0)\Bigr| 
    \lesssim N^{-\frac{1}{4}} \beta^{\frac{1}{2}}\left(\int_{2 N}^{+\infty} \left|f(x)\right|^2 \cdot x^{\mu+1} d x\right)^{1 / 2}.
  \end{equation}
  Combining \eqref{eq:0916-5}, \eqref{eq:0916-5}, \eqref{eq:0916-15} with
  \eqref{eq:0810-12}, we prove \eqref{eq:0916-2}. The proof for \eqref{eq:0916-3}
  is analogous.
\end{proof}

Let $u(x) = v(x^2) = v(y)$,
from the relationship between Laguerre and Hermite function \eqref{eq:relaion-Laguerre-Hermite} it is not 
difficult to deduce
\begin{equation}\label{eq:0624-17}
  \begin{aligned}
  \left\|\left(u-\widehat{I}_{2 N+1}^{(\mu,\beta)} u\right)\right\|_{|x|^{2\mu}}&=\left\|\left(v-\widehat{I}_{G,N}^{(\mu-\frac{1}{2},\beta^2)} v\right)\right\|_{y^{\mu-\frac{1}{2}}}, \\
  \left\|\left(u-\widehat{I}_{2 N}^{(\mu,\beta)} u\right)\right\|_{|x|^{2 \mu}}&=\left\|\left(v-\widehat{I}_{R,N}^{(\mu-\frac{1}{2},\beta^2)} v\right)\right\|_{y^{\mu-\frac{1}{2}}}.
  \end{aligned}
\end{equation}
Combining estimates for generalized Hermite interpolation error in Theorem~\ref{thm:0916-1} with \eqref{eq:0624-17} yields the following Laguerre interpolation error estimate.
\begin{corollary}
  Assume that
  \begin{equation}\label{eq:1111-13}
    \left\|v-\widehat{\Pi}_N^{(\mu, \beta, L)} v\right\|_{x^\mu} \lesssim_\mu\left(\int_{2N}^{+\infty} \left|f(x)\right|^2 d x\right)^{1 / 2}.
  \end{equation}
  Then, for Gauss--Radau interpolation $\widehat{I}_{R, N}^{(\mu, \beta)} v$, we have
  \begin{equation}\label{eq:1111-15}
    \left\|v-\widehat{I}_{R, N}^{(\mu, \beta)} v\right\|_{x^\mu} \lesssim_\mu\left(\int_{2 N}^{+\infty} \left|f(x)\right|^2 \cdot \left(x + \beta^{-\mu-\frac{1}{2}}\left(\frac{x}{N}\right)^{\mu+\frac{3}{2}}\right) d x\right)^{1 / 2}.
  \end{equation}
  For Gauss interpolation $\widehat{I}_{G,N}^{(\mu, \beta)} v$, we have
  \begin{equation}\label{eq:1111-14}
    \left\|v-\widehat{I}_{G,N}^{(\mu, \beta)} v\right\|_{x^\mu} \lesssim_\mu \quad\left(\int_{2 N}^{+\infty} \left|f(x)\right|^2 \cdot x d x\right)^{1 / 2}.
  \end{equation}
\end{corollary}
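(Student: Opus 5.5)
The plan is to reduce to the Hermite case via the substitution $u(x)=v(x^2)$ with Hermite parameter $\mu_H=\mu+\frac12$ and scaling $\sqrt{\beta}$, and then apply Theorem~\ref{thm:0916-1}. The relevant identities are Lemma~\ref{lem:equiv} and \eqref{eq:0624-17}, read with $\mu$ replaced by $\mu+\frac12$ and $\beta$ by $\sqrt\beta$:
\begin{equation*}
\bigl\|v-\widehat{\Pi}_N^{(\mu,\beta,L)}v\bigr\|_{x^\mu}=\bigl\|u-\widehat{\Pi}_{2N}^{(\mu+\frac12,\sqrt\beta)}u\bigr\|_{|x|^{2\mu+1}},\qquad
\bigl\|v-\widehat{I}_{R,N}^{(\mu,\beta)}v\bigr\|_{x^\mu}=\bigl\|u-\widehat{I}_{2N}^{(\mu+\frac12,\sqrt\beta)}u\bigr\|_{|x|^{2\mu+1}}.
\end{equation*}
The Gauss analogue holds with $\widehat{I}_{G,N}$ and $\widehat{I}_{2N+1}$.

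\textbf{Step 1: transfer the hypothesis.} Hypothesis \eqref{eq:1111-13} controls only the even-index projection errors $\|u-\widehat{\Pi}_{2N}u\|$. Theorem~\ref{thm:0916-1}, by contrast, needs \eqref{eq:0916-1} for every index $K$ (it is used through \eqref{eq:0916-9} and the Abel summation, and through Theorem~\ref{thm:0826-2}). Since $u$ is even, all odd Hermite coefficients vanish, so $\widehat{\Pi}_{2N+1}u=\widehat{\Pi}_{2N}u$. Consequently, for any $K$, writing $N'=\lfloor K/2\rfloor$, we get
\[
\|u-\widehat{\Pi}_K u\|\lesssim\Bigl(\int_{2N'}^\infty|f|^2\Bigr)^{1/2}.
\]
Because $2N'\ge K-1$, this differs from $\int_K^\infty$ only by a unit shift of the lower limit. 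I would absorb that shift, either by replacing $f$ with a slightly shifted majorant or by noting that Theorem~\ref{thm:0916-1} only ever uses tails starting at $2N$ or at indices larger than $2N$. In fact, in its proof the assumption enters only through $\sum_{n>2N}|c_n|^2 n^{k}$, and for even functions this sum runs over even indices alone. So \eqref{eq:0916-1} holds with $\mu_H=\mu+\frac12$ and scaling $\sqrt\beta$, for all the indices that matter.

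\textbf{Step 2: apply Theorem~\ref{thm:0916-1}.} For the Radau case, \eqref{eq:0916-2} gives the weight
\[
x+(\sqrt\beta)^{-2\mu_H}(x/N)^{\mu_H+1}=x+\beta^{-\mu-\frac12}(x/N)^{\mu+\frac32},
\]
integrated from $2N$. This is exactly \eqref{eq:1111-15}. For the Gauss case, \eqref{eq:0916-3} gives the weight $x$ integrated from $2N+1$, and this is bounded by the integral from $2N$, which yields \eqref{eq:1111-14}.

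\textbf{Main obstacle.} The one genuinely delicate point is Step 1: matching the index ranges between Laguerre degree $N$ and Hermite degree $2N$, and checking that the internal uses of \eqref{eq:0916-1} inside Theorem~\ref{thm:0916-1}, including the derivative estimate via Theorem~\ref{thm:0826-2}, only need the even-index tails that the hypothesis supplies. My first approach would be to re-read those internal steps coefficientwise, using that $c_{2m+1}=0$, so that every tail sum is indexed at even $n\ge 2N$.
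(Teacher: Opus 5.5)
Your proposal is correct and follows the paper's route. The paper sets $u(x)=v(x^2)$, uses \eqref{eq:0624-17} and Lemma~\ref{lem:equiv} with Hermite parameter $\mu+\frac12$ and scaling $\sqrt{\beta}$, and reads the weights off Theorem~\ref{thm:0916-1}. Your Step 1 adds care the paper omits. You note that $\widehat{\Pi}_{2N+1}u=\widehat{\Pi}_{2N}u$ for even $u$, and you track that only indices $\geqslant 2N$ are used; this is the right way to absorb the one-index shift, whereas a global shifted majorant would lower the integration limit to $2N-1$. This correctly yields both bounds with lower limit $2N$.
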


\subsection{Numerical quadrature error}
We demonstrate that for general Gauss quadrature, the quadrature error can be transformed into  
interpolation errors for estimation, thereby enabling the application of the 
previously established scaled Hermite/Laguerre interpolation error estimates to 
derive corresponding quadrature error estimates.

Specifically, consider a weighted integral over $\mathcal{X}$
\begin{equation}\label{eq:0113-1}
  Q(\phi) = \int_{\mathcal{X}} \phi(x) \omega(x) dx.
\end{equation}
The corresponding Gauss quadrature is defined as
\begin{equation}\label{eq:0113-2}
  Q_N(\phi) = \sum_{j=0}^{N} \omega_{N,i}\phi(x_{N,i}),
\end{equation}
where $x_{N,i}$ and $\omega_{N,i}$ are the numerical quadrature nodes and weights, respectively.
Next, define the interpolation operator at the quadrature nodes as $I_{N}: C(\mathcal{X}) \rightarrow P_N$
satisfying
\begin{equation}\label{eq:0113-3}
  I_N[\phi](x_{N,i}) = \phi(x_{N,i}),\ \forall\, 0 \leqslant i \leqslant N.
\end{equation}
Then, we have the following result.
\begin{lemma}\label{lem:quadrature}
  Assuming the numerical quadrature \eqref{eq:0113-2} satisfies the following exactness property:
  \begin{equation}\label{eq:0113-4}
    Q_N(\phi) = Q(\phi),\, \forall\, \phi \in P_{2N}.
  \end{equation}
 Let $\phi=\phi_1 \phi_2$, $\psi_1=I_{N} \phi_1, \,\psi_2=I_{N} \phi_2$.
  Then the quadrature error can be controlled in terms of the interpolation error
  $\phi_1-\psi_1,\,\phi_2-\psi_2$, i.e.,
  \begin{equation}\label{eq:0113-5}
    \begin{aligned}
    \left|Q(\phi)-Q_{N}(\phi)\right| & \leqslant\left(\left\|\phi_1\right\|_\omega+\left\|\phi_2\right\|_\omega+\left\|\psi_1\right\|_\omega+\left\|\psi_2\right\|_\omega\right) \\
    &\quad \times\left(\left\|\phi_1-\psi_1\right\|_\omega+\left\|\phi_2-\psi_2\right\|_\omega\right).
    \end{aligned}
  \end{equation}
\end{lemma}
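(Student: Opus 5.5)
The idea is to replace $\phi$ by the product of the two interpolants, $\psi_1\psi_2$, which is a polynomial of degree at most $2N$. Exactness \eqref{eq:0113-4} then gives $Q_N(\psi_1\psi_2)=Q(\psi_1\psi_2)$. On the other hand, $\psi_1\psi_2$ coincides with $\phi_1\phi_2$ at every node $x_{N,i}$, because $\psi_k(x_{N,i})=\phi_k(x_{N,i})$. Since $Q_N$ only sees nodal values, $Q_N(\phi)=Q_N(\psi_1\psi_2)=Q(\psi_1\psi_2)$. Hence
\begin{equation*}
Q(\phi)-Q_N(\phi)=Q(\phi_1\phi_2-\psi_1\psi_2)=\int_{\mathcal{X}}\bigl(\phi_1\phi_2-\psi_1\psi_2\bigr)\,\omega\,dx .
\end{equation*}

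Next I split the difference of products in the standard way:
\begin{equation*}
\phi_1\phi_2-\psi_1\psi_2=\phi_1(\phi_2-\psi_2)+\psi_2(\phi_1-\psi_1).
\end{equation*}
Applying the Cauchy--Schwarz inequality in $L^2_\omega(\mathcal{X})$ to each term gives
\begin{equation*}
|Q(\phi)-Q_N(\phi)|\leqslant \|\phi_1\|_\omega\|\phi_2-\psi_2\|_\omega+\|\psi_2\|_\omega\|\phi_1-\psi_1\|_\omega .
\end{equation*}
Each coefficient $\|\phi_1\|_\omega$ and $\|\psi_2\|_\omega$ is bounded by the sum $\|\phi_1\|_\omega+\|\phi_2\|_\omega+\|\psi_1\|_\omega+\|\psi_2\|_\omega$. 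Each error factor is bounded by $\|\phi_1-\psi_1\|_\omega+\|\phi_2-\psi_2\|_\omega$. Therefore the right-hand side is at most the product in \eqref{eq:0113-5}; no constant is lost, because the two terms together are dominated by a single product of sums.

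There is no real obstacle. The only point needing care is that the identity $Q_N(\phi)=Q_N(\psi_1\psi_2)$ uses nothing but the nodal values. For the Hermite and Laguerre function settings of the paper, $P_N$ and $P_{2N}$ are read as $\widehat{P}_N^{\beta,\cdot}$ and $\widehat{P}_{2N}^{\beta,\cdot}$. The product of two elements of $\widehat{P}_N^{\beta,H}$ carries the factor $e^{-\beta^2x^2}$ instead of $e^{-\beta^2x^2/2}$, so one should arrange the weights as in \eqref{eq:0903-12}, or absorb the extra exponential into $\omega$, so that the exactness hypothesis applies to $\psi_1\psi_2$. In the abstract polynomial setting of the lemma this issue does not arise.
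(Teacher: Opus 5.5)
Your proof is correct and is essentially the paper's. Exactness on $\psi_1\psi_2\in P_{2N}$ plus nodal agreement gives $Q_N(\phi)=Q(\psi_1\psi_2)$, and the product difference is then split and bounded by Cauchy--Schwarz; the paper merely uses the mirror-image splitting $(\phi_1-\psi_1)\phi_2+\psi_1(\phi_2-\psi_2)$.

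Your closing caveat is also sound: products of elements of $\widehat{P}_N^{\beta,H}$ lie in $e^{-\beta^2x^2}P_{2N}$ rather than $\widehat{P}_{2N}^{\beta,H}$. So the paper's Hermite and Laguerre quadrature theorems should be read as this lemma applied with the exponential absorbed into the weight, which the paper leaves implicit.
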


\begin{proof}
  From the definition of $I_{N}$ \eqref{eq:0113-3}, we know $\psi_i\left(x_{N,j}\right)=\phi_i\left(x_{N,j}\right), \,0 \leqslant j \leqslant N, \,i=1,2$.
  Since $\psi_1 \psi_2 \in P_{2 N}$, thanks to the
  exactness,
  \begin{equation}\label{eq:0925-4}
    \begin{aligned}
    Q\left(\psi_1 \psi_2\right) & =\int_{\mathcal{X}} \omega(x) \psi_1 \psi_2 d x \\
    & =\sum_{j=0}^N\left(\psi_1 \psi_2\right)\left(x_{N,j}\right) \omega_{N,j} \\
    & =\sum_{j=0}^N\left(\phi_1 \phi_2\right)\left(x_{N,j}\right) \omega_{N,j} \\
    & =Q_{N}(\phi).
    \end{aligned}
  \end{equation}
  Hence
  \begin{equation}\label{eq:0925-5}
    \begin{aligned}
    \left|Q(\phi)-Q_{N}(\phi)\right| & =\left|Q\left(\phi_1 \phi_2\right)-Q\left(\psi_1 \psi_2\right)\right| \\
    &\quad \leqslant \left|Q\left(\left(\phi_1-\psi_1\right) \phi_2\right)\right|+\left|Q\left(\psi_1\left(\phi_2-\psi_2\right)\right)\right|.
    \end{aligned}
  \end{equation}
  By applying the Cauchy--Schwarz inequality, we obtain
  \begin{equation}\label{eq:0925-5.3}
    \begin{aligned}
    \left|Q\left(\left(\phi_1-\psi_1\right) \phi_2\right)\right| & =\left|\int_{\mathcal{X}}\left(\phi_1-\psi_1\right) \phi_2 \omega(x) d x\right| \\
    &\quad \leqslant\left\|\phi_1-\psi_1\right\|_\omega\left\|\phi_2\right\|_\omega.
    \end{aligned}
  \end{equation}
  Similarly,
  \begin{equation}\label{eq:0925-5.6}
    \left|Q\left(\psi_1\left(\phi_2-\psi_2\right)\right)\right| \leqslant\left\|\psi_1\right\|_\omega\left\|\phi_2-\psi_2\right\|_\omega.
  \end{equation}
  Combining \eqref{eq:0925-5.3}, \eqref{eq:0925-5.6} with \eqref{eq:0925-5}
  yields \eqref{eq:0113-5}.
\end{proof}

\subsubsection{Laguerre quadrature error}
We denote the integral $\int_0^{\infty} \phi(x) x^\mu d x$
by $Q^{(\mu)}(\phi)$, and its approximation using Laguerre quadrature
$\sum_{j=0}^N \phi\left(\xi_{Z, N, j}^{(\mu, \beta)}\right) \widehat{\omega}_{Z, N, j}^{(\mu, \beta)}$
by $Q_{Z, N}^{(\mu, \beta)}(\phi)$. Then from \eqref{Laguerre weights def}
we have
\begin{equation}\label{eq:0925-2}
  Q^{(\mu)}(\phi)=Q_{Z, N}^{(\mu, \beta)}(\phi), \quad \forall\, \phi \in \widehat{P}_{2 N+\lambda z}^{\beta, L}.
\end{equation}
Prepared with the preceding notation, we now state the following theorem on the estimation of Laguerre quadrature error,
which can be derived from Theorem~\ref{lem:quadrature}.

\begin{theorem}\label{thm:Laguerre quadrature error}
  Assume $\phi=\phi_1 \phi_2$, let $\psi_1=\widehat{I}_{Z, N}^{(\mu, \beta)} \phi_1, \,\psi_2=\widehat{I}_{Z, N}^{(\mu, \beta)} \phi_2, \,\omega=x^\mu$.
  Then Laguerre quadrature error can be controlled in terms of the interpolation error
  $\phi_1-\psi_1,\,\phi_2-\psi_2$, more precisely,
  \begin{equation}\label{eq:0925-3}
    \begin{aligned}
    \left|Q^{(\mu)}(\phi)-Q_{Z, N}^{(\mu, \beta)}(\phi)\right| & \leqslant\left(\left\|\phi_1\right\|_\omega+\left\|\phi_2\right\|_\omega+\left\|\psi_1\right\|_\omega+\left\|\psi_2\right\|_\omega\right) \\
    &\quad \times\left(\left\|\phi_1-\psi_1\right\|_\omega+\left\|\phi_2-\psi_2\right\|_\omega\right).
    \end{aligned}
  \end{equation}
\end{theorem}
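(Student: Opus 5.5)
Theorem~\ref{thm:Laguerre quadrature error} is an instance of Lemma~\ref{lem:quadrature}, with the polynomial space $P_N$ replaced by the weighted space $\widehat{P}_N^{\beta,L}$. Rather than quoting the lemma verbatim, whose hypothesis is phrased in terms of $P_{2N}$, I would repeat its short argument in the Laguerre setting. The set-up is $\mathcal{X}=\mathbb{R}_+$, $\omega=x^\mu$, nodes $\xi_{Z,N,j}^{(\mu,\beta)}$, weights $\widehat{\omega}_{Z,N,j}^{(\mu,\beta)}$, and interpolation operator $\widehat{I}_{Z,N}^{(\mu,\beta)}$ from \eqref{eq:Laguerre interpolation def}.

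\textbf{Step 1: the product of interpolants lies in the exactness class.} Since $\psi_i\in\widehat{P}_N^{\beta,L}$, we can write $\psi_i=e^{-\beta x/2}p_i$ with $p_i\in P_N$. Then
\[
\psi_1\psi_2=e^{-\beta x}p_1p_2 = e^{-\beta x/2}\bigl(e^{-\beta x/2}p_1p_2\bigr).
\]
This is not literally of the form $e^{-\beta x/2}\times$polynomial, so this is the point that needs care. The clean fix is to use the polynomial-weight quadrature \eqref{Laguerre poly weights def} together with the weight relation \eqref{eq:0903-12}. We have
\[
Q^{(\mu)}(\psi_1\psi_2)=\int_0^\infty p_1p_2\,e^{-\beta x}x^\mu\,dx,
\]
and $p_1p_2\in P_{2N}\subset P_{2N+\lambda_Z}$. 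By \eqref{Laguerre poly weights def} this integral equals
\[
\sum_j p_1p_2(\xi_j)\,\omega_{Z,N,j}^{(\mu,\beta)}
=\sum_j (\psi_1\psi_2)(\xi_j)\,e^{\beta\xi_j}\,\omega_{Z,N,j}^{(\mu,\beta)}
=\sum_j(\psi_1\psi_2)(\xi_j)\,\widehat{\omega}_{Z,N,j}^{(\mu,\beta)},
\]
using \eqref{eq:0903-12} in the last equality. The right-hand side is $Q_{Z,N}^{(\mu,\beta)}(\psi_1\psi_2)$. I expect this reconciliation of the exactness class (as written in \eqref{eq:0925-2}) with the product $e^{-\beta x}p_1p_2$ to be the only real obstacle, and routing through \eqref{Laguerre poly weights def} and \eqref{eq:0903-12} resolves it.

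\textbf{Step 2: replace the interpolants by the original functions at the nodes.} Because $\psi_i(\xi_j)=\phi_i(\xi_j)$ at every node, the discrete sum above equals $Q_{Z,N}^{(\mu,\beta)}(\phi_1\phi_2)=Q_{Z,N}^{(\mu,\beta)}(\phi)$. Hence
\[
Q^{(\mu)}(\phi)-Q_{Z,N}^{(\mu,\beta)}(\phi)=Q^{(\mu)}(\phi_1\phi_2-\psi_1\psi_2).
\]

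\textbf{Step 3: split and apply Cauchy--Schwarz.} Write
\[
\phi_1\phi_2-\psi_1\psi_2=(\phi_1-\psi_1)\phi_2+\psi_1(\phi_2-\psi_2).
\]
Applying the Cauchy--Schwarz inequality in $L^2_{x^\mu}(\mathbb{R}_+)$ to each term gives the bound
\[
\|\phi_1-\psi_1\|_\omega\|\phi_2\|_\omega+\|\psi_1\|_\omega\|\phi_2-\psi_2\|_\omega,
\]
exactly as in \eqref{eq:0925-5.3}--\eqref{eq:0925-5.6}. This is trivially dominated by the product of the two sums on the right-hand side of \eqref{eq:0925-3}, which completes the proof.
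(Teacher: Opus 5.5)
Your proof is correct and follows the paper's route: the paper derives the theorem from Lemma~\ref{lem:quadrature}, whose argument is the same three steps (exactness on $\psi_1\psi_2$, agreement of $\psi_i$ with $\phi_i$ at the nodes, then Cauchy--Schwarz on the split $(\phi_1-\psi_1)\phi_2+\psi_1(\phi_2-\psi_2)$). Your Step~1 is more careful than the paper: since $\psi_1\psi_2=e^{-\beta x}p_1p_2$ does not lie in $\widehat{P}_{2N+\lambda_Z}^{\beta,L}$, the exactness class stated in \eqref{eq:0925-2} does not literally cover this product, and routing through \eqref{Laguerre poly weights def} and \eqref{eq:0903-12} is the right justification of $Q^{(\mu)}(\psi_1\psi_2)=Q_{Z,N}^{(\mu,\beta)}(\psi_1\psi_2)$.
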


\subsubsection{Hermtie quadrature error}
We denote the integral $\int_{-\infty}^{\infty} \phi(x)|x|^{2 \mu} d x$
by $Q_H^{(\mu)}(\phi)$, and its approximation using Hermite quadrature
$\sum_{j=0}^N \phi\left(x_{N, j}^{(\mu, \beta)}\right) \widehat{\omega}_{N, j}^{(\mu, \beta)}$
by $Q_{H, N}^{(\mu, \beta)}(\phi)$. Then from \eqref{Hermite weights def}
we have
\begin{equation}\label{eq:0926-7}
  Q_H^{(\mu)}(\phi)=Q_{H, N}^{(\mu, \beta)}(\phi), \quad \forall\, \phi \in \widehat{P}_{2 N+1}^{\beta, H}.
\end{equation}
Similar to Theorem~\ref{thm:Laguerre quadrature error}, we have
\begin{theorem}\label{thm:Hermite quadrature error}
  Assume $\phi=\phi_1 \phi_2$, let $\psi_1=\widehat{I}_{N}^{(\mu, \beta)} \phi_1, \,\psi_2=\widehat{I}_{N}^{(\mu, \beta)} \phi_2, \,\omega=|x|^{2\mu}$.
  Then the error of generalized Hermite quadrature can be controlled in terms of the interpolation error
  $\phi_1-\psi_1,\,\phi_2-\psi_2$, more precisely,
  \begin{equation}\label{eq:0926-8}
    \begin{aligned}
    \left|Q_H^{(\mu)}(\phi)-Q_{H, N}^{(\mu, \beta)}(\phi)\right| & \leqslant\left(\left\|\phi_1\right\|_\omega+\left\|\phi_2\right\|_\omega+\left\|\psi_1\right\|_\omega+\left\|\psi_2\right\|_\omega\right) \\
    & \times\left(\left\|\phi_1-\psi_1\right\|_\omega+\left\|\phi_2-\psi_2\right\|_\omega\right).
    \end{aligned}
  \end{equation}
\end{theorem}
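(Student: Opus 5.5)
The plan is to obtain Theorem~\ref{thm:Hermite quadrature error} directly from Lemma~\ref{lem:quadrature}, copying its proof almost line for line. The only change is that the polynomial space $P_N$ is replaced by the scaled Hermite function space $\widehat{P}_N^{\beta,H}$, and the exactness class $P_{2N}$ by $\widehat{P}_{2N+1}^{\beta,H}$.

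The one structural point to settle first is closure under multiplication. Take $\psi_1,\psi_2\in\widehat{P}_N^{\beta,H}$ and write $\psi_i=e^{-\beta^2x^2/2}p_i$ with $p_i\in P_N$. Then $\psi_1\psi_2=e^{-\beta^2x^2}p_1p_2$, and the weight factor is $e^{-\beta^2x^2}$ rather than $e^{-\beta^2x^2/2}$. Strictly, this product is therefore not of the form in \eqref{eq:def-PNHat-H}. The quadrature exactness \eqref{Hermite weights def} has to be read in the sense in which Gauss rules are actually built. The weights $\widehat{\omega}_{N,j}^{(\mu,\beta)}$ equal $e^{\beta^2 x_{N,j}^2}$ times the classical Gauss weights for $|x|^{2\mu}e^{-\beta^2x^2}$. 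This is the analogue of \eqref{eq:0903-12}, and it also holds for the Laguerre weights. The rule is exact for every $\phi$ such that $\phi\,e^{\beta^2x^2}\in P_{2N+1}$, that is, for $\phi=e^{-\beta^2x^2}q$ with $q\in P_{2N+1}$.

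With that reading fixed, $\psi_1\psi_2=e^{-\beta^2x^2}p_1p_2$ with $p_1p_2\in P_{2N}\subset P_{2N+1}$. So $Q_H^{(\mu)}(\psi_1\psi_2)=Q_{H,N}^{(\mu,\beta)}(\psi_1\psi_2)$. I expect this bookkeeping about which exactness class is meant to be the only genuine obstacle, and I would state it explicitly as a preliminary observation.

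Next I use the interpolation property \eqref{eq:0710-3}: $\psi_i(x_{N,j}^{(\mu,\beta)})=\phi_i(x_{N,j}^{(\mu,\beta)})$. This gives $Q_{H,N}^{(\mu,\beta)}(\psi_1\psi_2)=Q_{H,N}^{(\mu,\beta)}(\phi_1\phi_2)=Q_{H,N}^{(\mu,\beta)}(\phi)$. Hence
\begin{equation*}
Q_H^{(\mu)}(\phi)-Q_{H,N}^{(\mu,\beta)}(\phi)=Q_H^{(\mu)}(\phi_1\phi_2-\psi_1\psi_2).
\end{equation*}

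Finally I split $\phi_1\phi_2-\psi_1\psi_2=(\phi_1-\psi_1)\phi_2+\psi_1(\phi_2-\psi_2)$. Applying Cauchy--Schwarz in $L^2_\omega$ with $\omega=|x|^{2\mu}$ to each piece gives the bound $\|\phi_1-\psi_1\|_\omega\|\phi_2\|_\omega+\|\psi_1\|_\omega\|\phi_2-\psi_2\|_\omega$. Each factor $\|\phi_2\|_\omega$ and $\|\psi_1\|_\omega$ is dominated by the full sum $\|\phi_1\|_\omega+\|\phi_2\|_\omega+\|\psi_1\|_\omega+\|\psi_2\|_\omega$. Likewise each interpolation error is dominated by $\|\phi_1-\psi_1\|_\omega+\|\phi_2-\psi_2\|_\omega$. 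This yields \eqref{eq:0926-8}, and the argument for Theorem~\ref{thm:Laguerre quadrature error} is identical.
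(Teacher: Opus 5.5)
Your proposal is correct and follows the same route as the paper, which obtains this theorem from Lemma~\ref{lem:quadrature}: use the interpolation property and exactness to get $Q_{H,N}^{(\mu,\beta)}(\phi)=Q_H^{(\mu)}(\psi_1\psi_2)$, then split the difference and apply Cauchy--Schwarz. Your remark that the exactness class must be read as $e^{-\beta^2x^2}P_{2N+1}$, rather than literally as $\widehat{P}_{2N+1}^{\beta,H}$, is right. It fills in a step the paper leaves implicit, namely that $\psi_1\psi_2$ actually lies in the class where the rule is exact.
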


\subsubsection{Application: optimality of scaled Hermite/Laguerre quadrature}
\label{subsec:optimality}
In the literature, Hermite and Laguerre quadratures have been found to be less efficient compared to quadratures on bounded domains.
For instance, Trefethen~\cite{trefethen2022exactness} pointed out that when computing
\begin{equation}\label{eq:0113-6}
  \int_{-\infty}^{+\infty} e^{-x^2} cos(x^3) dx,
\end{equation}
Gauss--Legendre, Clenshaw--Curtis, and trapezoidal
quadrature can achieve a convergence rate of $\exp(-CN^{2/3})$, while numerical result shows
Gauss--Hermite quadrature can only achieve $\exp(-C\sqrt{N})$ rate.

However, such a comparison is not entirely fair. 
Quadrature rules on bounded domains are typically applied to a properly truncated region, 
whereas Hermite quadrature does not distribute its quadrature points over the same region through scaling.
Once this is achieved, by applying the error analysis established in this paper, 
it can be demonstrated that Hermite quadrature will attain the same level of accuracy as quadrature rules on bounded domains.

Specifically, let $\phi_1(x) = e^{-\frac{x^2}{2}}$, $\phi_2(x) = e^{-\frac{x^2}{2}} \cos(x^3)$, 
by Theorem \ref{thm:Hermite quadrature error}, the Hermite quadrature error when computing
\eqref{eq:0113-6} can be controlled by $\bigl\|\phi_1-\widehat{I}_N^{(\beta)} \phi_1\bigr\|$ and $\bigl\|\phi_2-\widehat{I}_N^{(\beta)} \phi_2\bigr\|$.
Furthermore, using Theorem \ref{thm:0916-1}, it can be concluded that 
the interpolation error exhibits behavior similar to the projection error. 
It is only necessary to analyze the optimal scaling factor for the projection error.

By the Paley--Winner theorem~\cite{paley1934fourier},
\begin{equation}\label{eq:0113-7}
  \left|\mathcal{F}[\phi_2](\xi)\right| \lesssim e^{-c|\xi|}.
\end{equation}
Hence, let $M = \frac{\sqrt{N}}{2\sqrt{3}}\beta,\, B = \frac{\sqrt{N}}{2\sqrt{3}}/\beta$, we have
\begin{equation}\label{eq:0113-8}
  \bigl\|\phi_2 \cdot \mathbb{I}_{\{|x|>M\}}\bigr\| \lesssim e^{-c N / \beta^2}, 
  \quad B^{\frac{1}{2}} \left\| \mathcal{F}\left[\phi_2\right](B\xi) \right\|_{\mathbb{R} \backslash[-1,1]} \lesssim e^{-c \sqrt{N} \beta}.
\end{equation}
Letting $\beta = N^{\frac{1}{6}}$ to balance the frequency and spatial truncation errors, and applying Theorem \ref{thm:bigthm}, we obtain:
\begin{equation}\label{eq:0113-9}
  \bigl\|\phi_2-\widehat{\Pi}_N^\beta \phi_2\bigr\| \lesssim e^{-cN^{\frac{2}{3}}}.
\end{equation}
Similarly, it can be concluded that the same convergence accuracy holds for $\phi_1$ as well.
Based on the above analysis, with the optimal choice of $\beta = N^{\frac{1}{6}}$, the following quadrature error estimate holds:
\begin{equation}\label{eq:0113-10}
  \left|Q_H\left(e^{-x^2}cos(x^3)\right)-Q_{H, N}^{(\beta)}\left(e^{-x^2}cos(x^3)\right)\right| \lesssim e^{-cN^{\frac{2}{3}}}.
\end{equation}
This indicates that the scaling optimized Hermite quadrature achieves accuracy comparable to quadrature rules on bounded domains.
Laguerre quadrature exhibits similar inefficiency, which can likewise be overcome by selecting an appropriate scaling factor to balance the spatial and frequency truncation errors.

\subsection{Model problem}\label{subsec:model}
Consider the model problem on the half-line
\begin{equation}\label{eq:0624-18}
  -u_{x x}+\gamma u=f, \quad x \in \mathbb{R}_{+}, \quad \gamma>0 ; \quad u(0)=0, \quad \lim _{x \rightarrow+\infty} u(x)=0 .
\end{equation}
Denote
\begin{displaymath}
  \widehat{P}_{N,\beta}^0=\bigl\{\phi \in \widehat{P}_N^{\beta,L}: \phi(0)=0\bigr\} .
\end{displaymath}
The weak formulation for \eqref{eq:0624-18} is
\begin{equation}\label{eq:0624-19}
  \left\{\begin{array}{l}
  \text { Find } u \in H_0^1\left(\mathbb{R}_{+}\right) \text {such that } \\
  a(u, v):=\left(u^{\prime}, v^{\prime}\right)+\gamma(u, v)=(f, v), \quad \forall v \in H_0^1\left(\mathbb{R}_{+}\right).
  \end{array}\right.
\end{equation}
The corresponding Laguerre--Galerkin approximation to \eqref{eq:0624-18} is
\begin{equation}\label{eq:0624-20}
  \left\{\begin{array}{l}
  \text { Find } u_{N,\beta} \in \widehat{P}_{N,\beta}^0 \text { such that } \\
  a\left(u_{N,\beta}, v_{N,\beta}\right)=\bigl(\widehat{I}_{N,\beta} f, v_{N,\beta}\bigr), 
  \quad \forall \,v_{N,\beta} \in \widehat{P}_{N,\beta}^0.
  \end{array}\right.
\end{equation}
Similar to Theorem 7.11 of~\cite{shen2011spectral}, we define projection
operator $\widehat{\Pi}_{N, \beta}^{1,0}: H_0^1\left(\mathbb{R}_{+}\right) \longrightarrow \widehat{P}_{N, \beta}^0$ satisfying
\begin{equation}\label{eq:0626-29}
  \left(\left(u-\widehat{\Pi}_{N, \beta}^{1,0} u\right)^{\prime}, v_{N, \beta}^{\prime}\right)+\frac{\beta^2}{4}\left(u-\widehat{\Pi}_{N, \beta}^{1,0} u , v_{N, \beta}\right)=0, \quad \forall v_{N, \beta} \in \widehat{P}_{N, \beta}^0.
\end{equation}
Following the proof of Theorem 7.10, Theorem 7.11 of~\cite{shen2011spectral}, 
we have
\begin{theorem}\label{thm:0626-thm2}
  For any $u \in H_0^1\left(\mathbb{R}_{+}\right)$, use $\widehat{\Pi}_N^\beta$
  to represent $\widehat{\Pi}_{N}^{(0,\beta,L)}$ defined in \eqref{eq:def-projHat-L}, we have
  \begin{equation}
    \left\|u-\widehat{\Pi}_{N, \beta}^{1,0} u\right\|_1 \leqslant\left(2+\frac{2}{\beta}\right)\left\|u^{\prime}-\widehat{\Pi}_{N-1}^\beta u^{\prime}\right\|+(\beta+1)\left\|u-\widehat{\Pi}_{N-1}^\beta u\right\|.
  \end{equation}
\end{theorem}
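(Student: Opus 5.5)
Since $\widehat{\Pi}_{N,\beta}^{1,0}$ is the orthogonal projection onto $\widehat{P}_{N,\beta}^0$ in the inner product $a_\beta(w,v)=(w',v')+\frac{\beta^2}{4}(w,v)$, I will use a best-approximation argument, as in Theorem 7.10/7.11 of~\cite{shen2011spectral}. For every $\phi\in\widehat{P}_{N,\beta}^0$,
\begin{displaymath}
\bigl\|(u-\widehat{\Pi}_{N,\beta}^{1,0}u)'\bigr\|^2+\tfrac{\beta^2}{4}\bigl\|u-\widehat{\Pi}_{N,\beta}^{1,0}u\bigr\|^2\leqslant \|(u-\phi)'\|^2+\tfrac{\beta^2}{4}\|u-\phi\|^2 .
\end{displaymath}
Converting back to the $H^1$ norm costs a factor depending on $\min(1,\beta/2)$. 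It therefore suffices to construct a good $\phi$ and keep track of the constants.

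The natural candidate integrates the projection of the derivative:
\begin{displaymath}
\phi(x)=\int_0^x \widehat{\Pi}_{N-1}^\beta u'(t)\,dt .
\end{displaymath}
Then $\phi(0)=0$ and $\phi'=\widehat{\Pi}_{N-1}^\beta u'$, so $\|(u-\phi)'\|=\|u'-\widehat{\Pi}_{N-1}^\beta u'\|$. However, $\phi\in \widehat{P}_N^{\beta,L}$ only if $\int_0^\infty \widehat{\Pi}_{N-1}^\beta u'\,dt=0$. Otherwise $\phi$ tends to a nonzero constant at infinity.

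I will therefore use a more robust construction. Every $\psi\in\widehat{P}_{N-1}^{\beta,L}$ decomposes as $\psi=\chi'+\frac{\beta}{2}\chi$ with a unique $\chi\in\widehat{P}_{N-1}^{\beta,L}$. This holds because $(\partial_x+\beta/2)(e^{-\beta x/2}p)=e^{-\beta x/2}p'$, and $p\mapsto p'$ is a bijection from polynomials vanishing at $0$ onto $P_{N-1}$. Accordingly I write $u'+\frac{\beta}{2}u=g$ and set
\begin{displaymath}
\phi=(\partial_x+\tfrac{\beta}{2})^{-1}\widehat{\Pi}_{N-1}^\beta g, \qquad \phi(0)=0,
\end{displaymath}
that is, $\phi(x)=\int_0^x e^{-\beta(x-t)/2}\,\widehat{\Pi}_{N-1}^\beta g(t)\,dt$. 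This $\phi$ lies in $\widehat{P}_{N}^{\beta,L}$ and vanishes at $0$. Since $u(0)=0$, the error $e=u-\phi$ satisfies
\begin{displaymath}
e'+\tfrac{\beta}{2}e=g-\widehat{\Pi}_{N-1}^\beta g, \qquad e(0)=0 .
\end{displaymath}
Multiplying by $e$ and integrating gives $\|e'\|^2+\frac{\beta^2}{4}\|e\|^2=\|g-\widehat{\Pi}_{N-1}^\beta g\|^2$, because the cross term $\beta\int e e'=\frac{\beta}{2}[e^2]_0^\infty=0$. Finally, the triangle inequality gives
\begin{displaymath}
\|g-\widehat{\Pi}_{N-1}^\beta g\|\leqslant\|u'-\widehat{\Pi}_{N-1}^\beta u'\|+\tfrac{\beta}{2}\|u-\widehat{\Pi}_{N-1}^\beta u\| .
\end{displaymath}

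The last step combines the pieces. From $\|w'\|^2+\frac{\beta^2}{4}\|w\|^2$ we get $\|w\|_1\leqslant \max(1,2/\beta)\bigl(\|w'\|^2+\frac{\beta^2}{4}\|w\|^2\bigr)^{1/2}$ for $w=u-\widehat{\Pi}_{N,\beta}^{1,0}u$. Crude bounds such as $\max(1,2/\beta)\leqslant 1+2/\beta\leqslant 2+2/\beta$ and $\max(1,2/\beta)\cdot\frac{\beta}{2}\leqslant \beta+1$ then reproduce the stated constants.

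I expect the main obstacle to be checking that the decomposition really lands in the right space. Specifically, I must verify that $(\partial_x+\beta/2)^{-1}$ with zero initial value maps $\widehat{P}_{N-1}^{\beta,L}$ into $\widehat{P}_{N,\beta}^0$, with no stray $e^{-\beta x/2}$-type component. I must also confirm that $e^2$ vanishes at infinity for $u\in H_0^1$, so that the boundary term drops out.
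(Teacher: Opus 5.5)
Your proof is correct and is essentially the argument the paper invokes from Theorems 7.10--7.11 of \cite{shen2011spectral}: you use the bijection $\partial_x+\frac{\beta}{2}:\widehat{P}_{N,\beta}^0\to\widehat{P}_{N-1}^{\beta,L}$ to build $\phi$ with $(\partial_x+\frac{\beta}{2})\phi=\widehat{\Pi}_{N-1}^\beta(u'+\frac{\beta}{2}u)$, then combine the identity $\|e'+\frac{\beta}{2}e\|^2=\|e'\|^2+\frac{\beta^2}{4}\|e\|^2$ (valid for $e(0)=0$) with best approximation in the $a_\beta$-norm, which even gives the sharper constants $\max(1,2/\beta)$ and $\max(1,\beta/2)$. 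There are two small slips, and the obstacles you flag are immediate: the preimage $\chi$ should lie in $\widehat{P}_{N,\beta}^0$ rather than $\widehat{P}_{N-1}^{\beta,L}$, and the energy identity comes from squaring $e'+\frac{\beta}{2}e$ and integrating rather than multiplying by $e$; writing $\widehat{\Pi}_{N-1}^\beta g=e^{-\beta t/2}q$ gives $\phi=e^{-\beta x/2}\int_0^x q$, and $e\in H^1(\mathbb{R}_+)$ forces $e(x)\to 0$ at infinity.
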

Further, referring to the proof of Theorem 7.19 of~\cite{shen2011spectral}, and
combining it with Theorem~\ref{thm:0626-thm2}, we have
\begin{theorem}\label{thm:0626-thm3}
  For the solution $u$ to the model problem \eqref{eq:0624-19} and
  the solution $u_{N,\beta}$ to the Laguerre--Galerkin approximation
  problem \eqref{eq:0624-20}, we have
  \begin{equation}\label{eq:0626-thm3}
    \begin{aligned}
    \left\|u-u_{N,\beta}\right\|_1 & \lesssim\left(\frac{1}{\beta}+1\right)\left\|u^{\prime}-\widehat{\Pi}_{N-1}^\beta u^{\prime}\right\| \\
    &\quad +(1+\beta)\left\|u-\widehat{\Pi}_{N-1}^\beta u\right\| \\
    &\quad +\left\|f-\widehat{I}_{N}^\beta f\right\|.
    \end{aligned}
  \end{equation}
\end{theorem}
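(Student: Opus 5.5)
The plan is to follow the standard Strang-type argument used for Theorem 7.19 of~\cite{shen2011spectral}. We split the Galerkin error through the $H^1$-type projection, $u-u_{N,\beta}=(u-\widehat{\Pi}_{N,\beta}^{1,0}u)+(\widehat{\Pi}_{N,\beta}^{1,0}u-u_{N,\beta})$, and write $e_N:=\widehat{\Pi}_{N,\beta}^{1,0}u-u_{N,\beta}\in\widehat{P}_{N,\beta}^0$. The first piece is bounded directly by Theorem~\ref{thm:0626-thm2}, which produces exactly the terms $(2+2/\beta)\|u'-\widehat{\Pi}_{N-1}^\beta u'\|+(\beta+1)\|u-\widehat{\Pi}_{N-1}^\beta u\|$. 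It then remains to bound $\|e_N\|_1$ by the same quantities plus $\|f-\widehat{I}_N^\beta f\|$.

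For $e_N$, subtract \eqref{eq:0624-20} from \eqref{eq:0624-19} tested with $v=v_{N,\beta}\in\widehat{P}^0_{N,\beta}\subset H_0^1(\mathbb{R}_+)$. This gives the Galerkin orthogonality
\[
a(u-u_{N,\beta},v_{N,\beta})=(f-\widehat{I}_{N,\beta}f,v_{N,\beta}).
\]
Using the defining relation \eqref{eq:0626-29}, we have $a(u-\widehat{\Pi}^{1,0}_{N,\beta}u,v_{N,\beta})=(\gamma-\beta^2/4)(u-\widehat{\Pi}^{1,0}_{N,\beta}u,v_{N,\beta})$, because the derivative parts cancel. Hence
\[
a(e_N,v_{N,\beta})=(f-\widehat{I}_{N,\beta}f,v_{N,\beta})-(\gamma-\tfrac{\beta^2}{4})(u-\widehat{\Pi}^{1,0}_{N,\beta}u,v_{N,\beta}).
\]
Take $v_{N,\beta}=e_N$ and use coercivity $a(e_N,e_N)\ge\min(1,\gamma)\|e_N\|_1^2$ together with Cauchy--Schwarz. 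This yields
\[
\|e_N\|_1\lesssim\|f-\widehat{I}_{N,\beta}f\|+|\gamma-\beta^2/4|\,\|u-\widehat{\Pi}^{1,0}_{N,\beta}u\|,
\]
where the implied constant depends on $\gamma$, which is fixed.

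The main obstacle is the factor $|\gamma-\beta^2/4|$: if $\beta$ is large it seems to spoil the claimed $(1+\beta)$ dependence. I would handle this by bounding the $L^2$ norm $\|u-\widehat{\Pi}^{1,0}_{N,\beta}u\|$ more carefully rather than by the $H^1$ norm. First, from \eqref{eq:0626-29} with the energy norm $\|w'\|^2+\tfrac{\beta^2}{4}\|w\|^2$, the projection is the best approximation in that norm. Comparing with the competitor $w=\widehat{\Pi}_{N-1}^\beta$-based construction from the proof of Theorem~\ref{thm:0626-thm2} (namely $\int_0^x\widehat{\Pi}_{N-1}^\beta u'$ adjusted to lie in $\widehat{P}^0_{N,\beta}$) gives
\[
\tfrac{\beta}{2}\|u-\widehat{\Pi}^{1,0}_{N,\beta}u\|\lesssim (1+\tfrac1\beta)\|u'-\widehat{\Pi}_{N-1}^\beta u'\|+(1+\beta)\|u-\widehat{\Pi}_{N-1}^\beta u\|.
\]
Then $\beta^2\|u-\widehat{\Pi}^{1,0}u\|$ is controlled by $\beta$ times the right-hand side. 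If this still leaves an extra power of $\beta$, I would instead absorb the $\beta^2/4$ term by testing against the modified bilinear form $a_\beta(w,v)=(w',v')+\tfrac{\beta^2}{4}(w,v)$, or simply treat $\beta$ in a bounded range so that the constants are absorbed into $\lesssim$, as is implicit in~\cite{shen2011spectral}. Finally, the triangle inequality combines the two pieces into \eqref{eq:0626-thm3}.
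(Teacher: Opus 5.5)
\paragraph{Verdict: genuine gap, though the structure is right.}
Your architecture is the one the paper intends: the paper simply adapts the proof of Theorem 7.19 in \cite{shen2011spectral} and combines it with Theorem~\ref{thm:0626-thm2}. That means splitting through $\widehat{\Pi}_{N,\beta}^{1,0}u$, bounding $\tilde e_N:=u-\widehat{\Pi}_{N,\beta}^{1,0}u$ by Theorem~\ref{thm:0626-thm2}, and controlling $e_N=\widehat{\Pi}_{N,\beta}^{1,0}u-u_{N,\beta}$ by coercivity. The gap is in how you treat $a(\tilde e_N,e_N)$.

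\paragraph{Where it fails.}
By invoking \eqref{eq:0626-29} you replace $(\tilde e_N',e_N')$ with $-\tfrac{\beta^2}{4}(\tilde e_N,e_N)$. For $\beta=1$, as in the book, this costs only the harmless constant $\gamma-\tfrac14$. Here it produces the factor $|\gamma-\beta^2/4|$, and your repair does not remove it. Even granting your inequality
\[
\tfrac{\beta}{2}\|\tilde e_N\|\lesssim\left(1+\tfrac1\beta\right)\|u'-\widehat{\Pi}_{N-1}^\beta u'\|+(1+\beta)\|u-\widehat{\Pi}_{N-1}^\beta u\|,
\]
multiplying by $\beta/2$ to reach $\tfrac{\beta^2}{4}\|\tilde e_N\|$ leaves an extra factor $\beta$ in front of the right-hand side of \eqref{eq:0626-thm3}. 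So for large $\beta$ (e.g.\ $\beta=N^{1/2}$, as in the experiments) the stated bound is not obtained. Neither fallback closes this:
\begin{itemize}
\item There is no Galerkin orthogonality with respect to $a_\beta$, because \eqref{eq:0624-20} is posed with $a(\cdot,\cdot)$, i.e.\ with $\gamma$.
\item Restricting $\beta$ to a bounded range lets the constant depend on $\beta$. That makes the explicit factors $(1+1/\beta)$ and $(1+\beta)$ vacuous and excludes the $N$-dependent scalings the paper uses.
\end{itemize}

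\paragraph{The missing idea.}
Do not use \eqref{eq:0626-29} at this step; use continuity of $a$ directly. From \eqref{eq:0624-19}--\eqref{eq:0624-20},
\[
a(e_N,v)=(f-\widehat{I}_{N,\beta}f,v)-a(\tilde e_N,v)\quad\text{for all } v\in\widehat{P}^0_{N,\beta}.
\]
Take $v=e_N$. Combine the coercivity bound $a(e_N,e_N)\ge\min(1,\gamma)\|e_N\|_1^2$ with the continuity bound $|a(w,v)|\le\max(1,\gamma)\|w\|_1\|v\|_1$. This gives
\[
\|e_N\|_1\le\frac{\max(1,\gamma)}{\min(1,\gamma)}\|\tilde e_N\|_1+\frac{1}{\min(1,\gamma)}\|f-\widehat{I}_{N,\beta}f\|,
\]
with constants depending only on $\gamma$; this is just C\'ea's lemma. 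The triangle inequality, together with Theorem~\ref{thm:0626-thm2} applied to $\|\tilde e_N\|_1$, then yields \eqref{eq:0626-thm3} uniformly in $\beta$ and $N$.
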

Combining the projection and interpolation error estimates established
previously with Theorem~\ref{thm:0626-thm3}, we find that the error $\left\|u-u_{N,\beta}\right\|_1$
can be controlled by spatial/frequency truncation error and spectral error.

A similar analysis of the error for the Hermite--Galerkin method for the model problem defined on the whole real line can be found in our previous paper~\cite{hu2024scaling}.

\section{Numerical results and discussions}
\label{sec:experiments}
We verify our error estimates by comparing the numerical results with theoretical predictions. Some typical examples are selected to show the new insights gained from our estimates.

\subsection{The optimal scaling balances the spatial and frequency truncation errors}
From Theorem~\ref{thm:bigthm} we know the projection error can be controlled by spatial truncation error, frequency truncation error, and spectral error. The spectral error is bounded by $\|u\|e^{-cN}$ quasi-uniformly for the scaling factor. Hence, we only need to consider these truncation errors.

If the spatial truncation error and the frequency truncation error are imbalanced, i.e., one is much larger than the other (often by an order of magnitude), then by a proper scaling we can reduce the error of the dominant side and thus reduce the total error.

In general, finding the optimal scaling is equivalent to balancing the spatial and frequency truncation error. 
Through the above extended analysis in Section~\ref{sec:general}, we know that this principle also applies to
interpolation and Galerkin approximation. In the following examples, we will show how it works.

\subsection{Proper scaling recovers a geometric convergence}
\label{subsec: exp conv}
From (59), (60) of~\cite{boyd2014fourier}, we know that the Fourier transform of $u_H = e^{-x^{2n}}$ satisfies
\begin{displaymath}
  \begin{aligned}
  \Phi(k ; n) &\sim\left(\frac{k}{2 n}\right)^{1 /(2 n-1)} \exp \left(z \Psi\left(t_\sigma\right)\right) \frac{\sqrt{\pi}}{\sqrt{z}} \frac{1}{\sqrt{-P_2}} \\
  & =C_1 k^{(1-n) /(2 n-1)} \exp \bigl(-C_2 k^{2 n /(2 n-1)}\bigr) \cos \bigl(C_3 k^{2 n /(2 n-1)}-\xi_n\bigr),
  \end{aligned}
\end{displaymath}
where $C_1, C_2, C_3, \xi_n$ constants that depend on $n$.
If $\mu + \frac{1}{2} \in \mathbb{N}$, then for $r \in \mathbb{N},\, 0 \leqslant r \leqslant \mu + \frac{1}{2}$,  
\begin{displaymath}
  u_H(x) \lesssim e^{-x^{2n}}, \quad \partial^r \mathcal{F}[u_H](k) \lesssim_\mu e^{-c k^{2n/(2n-1)}}.
\end{displaymath}
By Theorem~\ref{cor:Laguerre-proj}, if $\beta = a$, where a is a constant, then
for $y = x^2,\,u(y) = u_H(x) = e^{-y^n}$, we have
\begin{equation}\label{eq:exp(-x2n)-beta1}
  \|u - \widehat{\Pi}_{N}^{(\mu,\beta,L)} u\|_{y^{\mu}} \lesssim_\mu e^{-c N^{n/(2n-1)}}.
\end{equation}
If $\mu + \frac{1}{2} \notin \mathbb{N}$, by interpolation inequality \eqref{eq:exp(-x2n)-beta1} also holds.
Taking $\beta = a \left(N\right)^{(n-1)/n}$ balances the spatial and frequency truncation error, we have
\begin{equation}\label{eq:exp(-x2n)-beta-opt}
  \|u - \widehat{\Pi}_{N}^{(\mu,\beta,L)} u\|_{y^{\mu}} \lesssim_\mu e^{-c N}.
\end{equation}

We now solve the model problem \eqref{eq:0624-18} by the Laguerre--Galerkin method defined in \eqref{eq:0624-20} with $\gamma = 1$ and true solution $u = e^{-x^n}-e^{-\frac{x^n}{2}},n\in \mathbb{N}^+$. 
With a constant scaling factor $\beta = 1$, the $L^2$  error $\|u - u_{N,\beta}\|$ is presented in \ref{fig:exp(-x2)-beta1}, where $N$ is taken from 10 to 300.

\begin{figure}[H]
  \centering
  \includegraphics[width=0.65\linewidth]{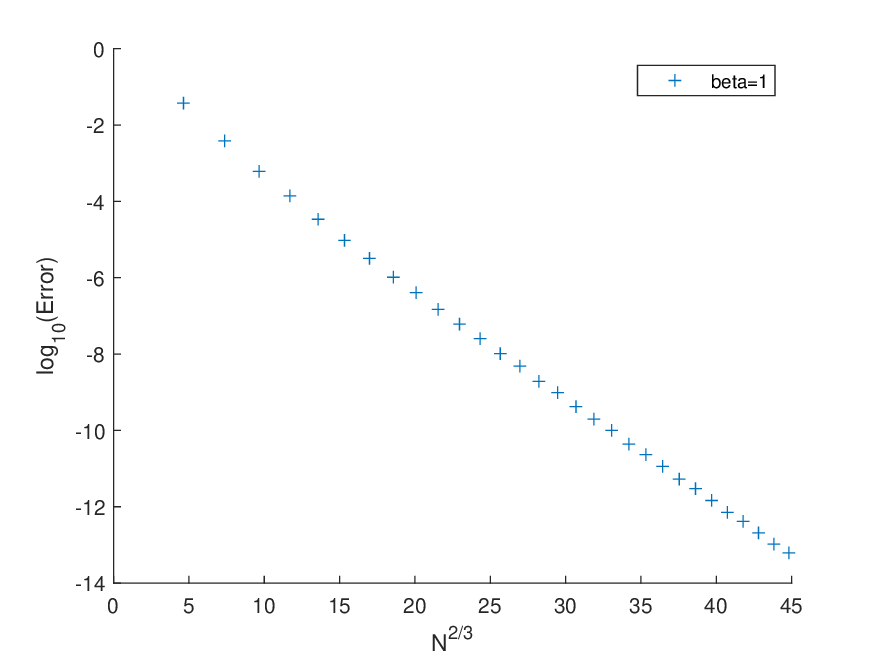}
  \caption{The $L^2$ error $\|u - u_{N,\beta}\|$ of the Laguerre--Galerkin method solving \eqref{eq:0624-18} with $u = e^{-x^2} - e^{-\frac{x^2}{2}}, \beta=1$.  }
  \label{fig:exp(-x2)-beta1}
\end{figure}

We observe the expected convergence order as in \eqref{eq:exp(-x2n)-beta1}.

The $L^2$ error $\|u - u_{N,\beta}\|$ corresponding to scaling $\beta = N^{1/2}$
is given in \ref{fig:exp(-x2)-beta-opt}, which verifies \eqref{eq:exp(-x2n)-beta-opt}.
Notice that in \ref{fig:exp(-x2)-beta1} $N$ is taken from 10 to 300 while
in \ref{fig:exp(-x2)-beta-opt} $N$ is only taken from 10 to 50,
a simple scaling significantly improves accuracy.

\begin{figure}[H]
  \centering
  \includegraphics[width=0.65\linewidth]{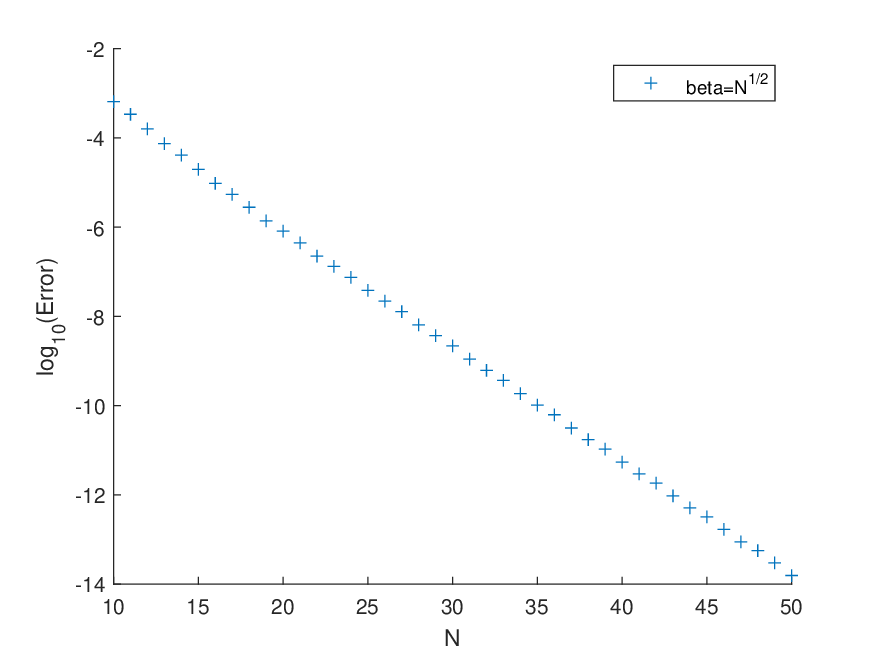}
  \caption{The $L^2$ error $\|u - u_{N,\beta}\|$ of the Laguerre--Galerkin method solving \eqref{eq:0624-18} with $u = e^{-x^2} - e^{-\frac{x^2}{2}}, \beta=N^{1/2}$.  }
  \label{fig:exp(-x2)-beta-opt}
\end{figure}

\subsection{Proper scaling doubles the convergence order}
\label{subsec: double conv}
In this subsection, we consider the approximation for
$u_H(x) = \left(1+x^2\right)^{-h}$, which stands for a class
of functions having algebraic decay in the spatial domain
and exponential decay in the frequency domain.

Since 
\begin{equation}\label{eq:0628-1}
  \mathcal{F}[u_H](k) = \frac{2^{1-h}|k|^{h-1 / 2} K_{h-\frac{1}{2}}(|k|)}{\Gamma(h)}.
\end{equation}
Here, $K_v(k)$ stands for the modified Bessel function of the second kind,
satisfying
\begin{equation}\label{eq:0628-2}
  K_v(k) \propto \sqrt{\frac{\pi}{2}} \frac{e^{-k}}{\sqrt{k}}\left(1+O\left(\frac{1}{k}\right)\right), \quad \text{as } |k| \rightarrow \infty.
\end{equation}
From \eqref{eq:0628-1} and \eqref{eq:0628-2} we know for $\mu + \frac{1}{2} \in \mathbb{N}$ and
$r \in \mathbb{N}, \, 0 \leqslant r \leqslant \mu + \frac{1}{2}$,
\begin{equation}\label{eq:1203-1}
  \partial^r \mathcal{F}[u_H](k) \lesssim_\mu e^{-c|k|}.
\end{equation}
By Corollary~\ref{cor:Laguerre-proj} we know if $\beta = a$, where $a$ is a constant, then
for $y = x^2,\,u(y) = u_H(x) = (1+y)^{-h}$, we have
\begin{equation}\label{eq:0628-3}
  \|u - \widehat{\Pi}_{N}^{(\mu,\beta,L)} u\|_{y^{\mu}} \lesssim_\mu N^{\frac{\mu+1}{2}-h}.
\end{equation}
If $\mu + \frac{1}{2} \notin \mathbb{N}$, by interpolation inequality \eqref{eq:0628-3} also holds.
$\beta = C h^2 (\ln N)^2 / N$ balances the spatial and frequency
error. Again, by Corollary~\ref{cor:Laguerre-proj}, we have
\begin{equation}\label{eq:0628-4}
  \|u - \widehat{\Pi}_{N}^{(\mu,\beta,L)} u\|_{y^{\mu}} \lesssim_\mu \left(N / \ln N\right)^{\mu+1-2h}.
\end{equation}
Next, we solve the model problem \eqref{eq:0624-18} by the Laguerre--Galerkin
method defined in \eqref{eq:0624-20} with $\gamma = 1$ and true solution
$u = (1+x)^{-h+1}-(1+x)^{-h} = x\cdot (1+x)^{-h}$.
Take $u = x/(1+x)^2$ as an example, with scaling factor
$\beta = 10$ and $\beta = 100/N$. Notice that the
scaling factor we choose here is slightly different from \eqref{eq:0628-4}.
If $\beta = C/N$, where $C$ is a large number, then by Corollary~\ref{cor:Laguerre-proj}
the frequency truncation error is negligible. In the pre-asymptotic regime, we have
\begin{equation}\label{eq:0628-5}
  \|u - \widehat{\Pi}_{N}^{(0,\beta,L)} u\| \lesssim N^{1-2h}.
\end{equation}
The $L^2$ error $\|u - u_{N,\beta}\|$ behaves like

\begin{figure}[H]
  \centering
  \includegraphics[width=0.65\linewidth]{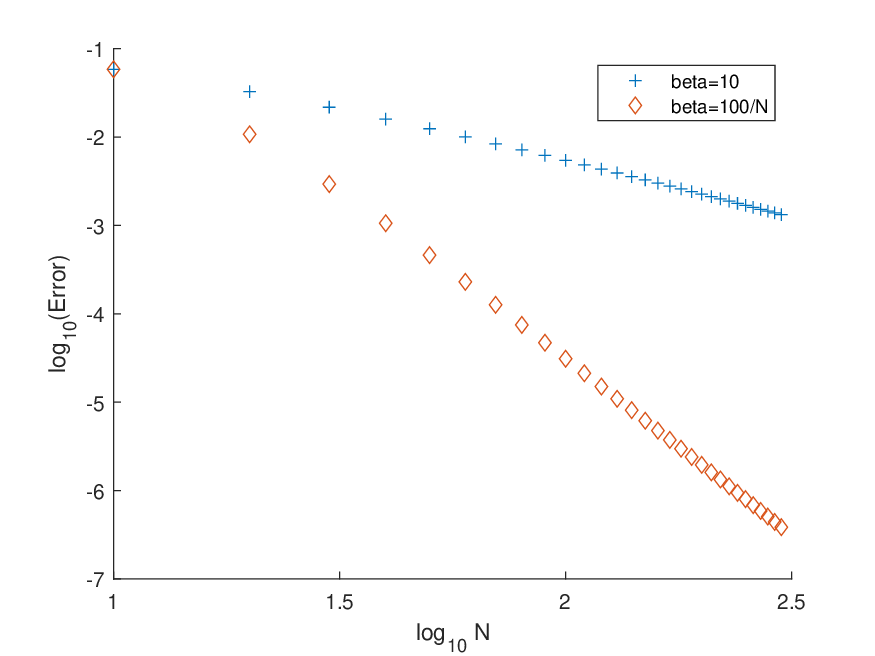}
  \caption{The $L^2$ error $\|u - u_{N,\beta}\|$ of the Laguerre--Galerkin method solving problem \eqref{eq:0624-18} with $u = x/(1+x)^2, \beta=10, \beta = 100/N$.  }
  \label{fig:x-2-beta-cmp}
\end{figure}

For $u = x/(1+x)^h$, taking different $h$, we list the convergence
orders of $\|u - u_{N,\beta}\|$ in Table~\ref{tab:conv-order}. Recall that we use \eqref{eq:0624-20}
to solve the model problem. Here we choose $\beta = 10$ and $\beta = 100/N$,
with $10 \leqslant N \leqslant 150$.

\begin{table}[h]
\caption{Convergence order of different scaling factor choices.}
\label{tab:conv-order}
\begin{tabular}{@{}lll@{}}
\toprule
    h & $\beta = 10$ & $\beta = 100/N$ \\ 
    \midrule
    1.6 & 0.732 & 2.76 \\
    1.8 & 0.914 & 3.14 \\ 
    2 & 1.10 & 3.52 \\ 
    2.2 & 1.28 & 3.90 \\ 
    2.4 & 1.46 & 4.29 \\
    2.6 & 1.64 & 4.67 \\
    \midrule
  \end{tabular}
\end{table}

It can be clearly seen from Table~\ref{tab:conv-order} that when $h$ increases 0.2, the convergence order for $\beta = 10$ increases about $0.2$, while for $\beta = 100/N$ it increases about $0.4$.
That is to say, a proper scaling doubles the convergence order. This fact holds for all functions having algebraic (exponential) decay in the spatial domain and exponential (algebraic) decay in the frequency domain.

\subsection{Why error in pre-asymptotic regime exhibits sub-geometric convergence}
\label{subsec: puzzling convergence}
When solving the model problem \eqref{eq:0624-18} with a true solution
$u = x(1+x)^{-h}$, \cite{shen2000stable} reported a sub-geometric convergence exp$(-c\sqrt{N})$.
This is considered puzzling since the classical error estimate
only predicts a convergence rate about $N^{-h}$.

However, through our insight, the sub-geometric convergence that occurs in the pre-asymptotic regime is
a natural result of Corollary~\ref{cor:Laguerre-proj}.
Let $u_H(x) = u(x^2) = x^2 (1+x^2)^{-h}$, notice that
\begin{displaymath}
  u_H(x) = \frac{1}{(1+x^2)^{h-1}} - \frac{1}{(1+x^2)^h},
\end{displaymath}
from \eqref{eq:1203-1} we know the frequency
truncation error is exp$(-c\sqrt{N})$. On the other hand, the spatial truncation error satisfies
\begin{equation}\label{eq:0704-2}
  \left\|u_H \cdot \mathbb{I}_{\left\{|x|>\frac{\sqrt{N}}{2 \sqrt{3}} \right\}}\right\|_{|x|^{2\mu}} \lesssim  N^{\frac{5}{4}+\frac{\mu}{2}-h}.
\end{equation}
Although asymptotically the spatial truncation error will be the dominant term, hence, the error has an order of about $N^{-h}$. In the pre-asymptotic regime, the frequency truncation error can be larger than the spatial truncation error, making the total error show a sub-geometric convergence order.

To make our argument more convincing, we will calculate the position where the error changes from a sub-geometric convergence $\exp(-c\sqrt{N})$ to an algebraic convergence about $N^{-h}$, then compare our results
with results of numerical experiments.

Let
\begin{equation}\label{eq:0704-3}
  E_s(M) = \left\|u_H \cdot \mathbb{I}_{\left\{|x|>M\right\}}\right\|_{|x|^{2\mu}},
\end{equation}
and
\begin{equation}\label{eq:0704-4}
  \begin{aligned}
    E_f(B) & =B^{-\mu+\frac{1}{2}} \left\| \mathcal{F}[u]\left(B \xi\right) \right\|_{H^{\lfloor\mu\rfloor}\left(\mathbb{R}\backslash [-1,1]\right)}^{{1}/{p}} \\
    &\quad \times \left\| \mathcal{F}[u]\left(B \xi\right) \right\|_{H^{\lceil\mu\rceil}\left(\mathbb{R}\backslash [-1,1]\right)}^{{1}/{q}}, \\
  \end{aligned}
\end{equation}
where $\frac{1}{p} \lfloor\mu\rfloor + \frac{1}{q} \lceil\mu\rceil = \mu$.
From Theorem \ref{thm:bigthm}, \eqref{eq:0704-3} and \eqref{eq:0704-4} we know that the spatial/frequency
truncation error can be recorded as $E_s\left(\frac{\sqrt{N}}{2\sqrt{3}}\right)$ and
$E_f\left(\frac{\sqrt{N}}{2\sqrt{3}}\right)$. Here, for computational convenience, 
we use the interpolation of integer-order Sobolev space norms as an approximation to fractional-order Sobolev space norms.
From the previous theoretical analysis, we know that the convergence rate changes from $\exp(-c\sqrt{N})$ to about $N^{-h}$ when spatial truncation error equals frequency truncation error, i.e.,
\begin{equation}\label{eq:0704-5}
  E_s\left(\frac{\sqrt{N}}{2\sqrt{3}}\right) = E_f\left(\frac{\sqrt{N}}{2\sqrt{3}}\right).
\end{equation}
Since the constant $\frac{1}{2\sqrt{3}}$ is not essential, we can only expect the transition
point $p$ where $E_f(p) = E_s(p)$ is proportional to $\sqrt{N}$.

Notice that for the Laguerre--Galerkin method \eqref{eq:0624-20} for problem with true solution
$u(x) = x(1+x)^{-h}$, we use Laguerre function $\widehat{\mathscr{L}_n}(x)$ to approximate $u(x)$, which is
equivalent to using $\widehat{H}_n^{(1/2)}(x)$ to approximate $u_H(x) = u(x^2) = x^2 (1+x^2)^{-h}$.
Hence for $E_s(M)$, $E_f(B)$ defined in \eqref{eq:0704-3} and \eqref{eq:0704-4}
with $\mu = \frac{1}{2}$ and $u_H(x) = x^2 (1+x^2)^{-h}$, we calculate the
transition point $p$ satisfying $E_f(p) = E_s(p)$.  Transition points $p$ 
with different $h$ are listed in Table~\ref{tab:transition-points}. Recall that according
to our analysis, $p$ should be proportional to $\sqrt{N}$.

\begin{table}[h]
\caption{Transition points $p$ satisfying $E_s(p)=E_f(p)$.}\label{tab:transition-points}
\begin{tabular}{@{}lllll@{}}
\hline 
$h$ & 3.0 & 3.5 & 4.0 & 4.5 \\ 
\hline
$p$ & 10.34 & 16.00 & 21.86 & 27.92 \\
\hline
\end{tabular}
\end{table}

Now, we solve the model problem \eqref{eq:0624-18} with $\gamma = 1$ and true solution $u(x) = x(1+x)^{-h}$
by the Laguerre--Galerkin method defined in \eqref{eq:0624-20} with $\beta = 1$. Taking different values of
$h$, the behavior of $\|u - u_{N,\beta}\|$ is presented in Figure~\ref{fig:pre-asymptotic}.

\begin{figure}[H]
  \centering
  \includegraphics[width=0.6\linewidth]{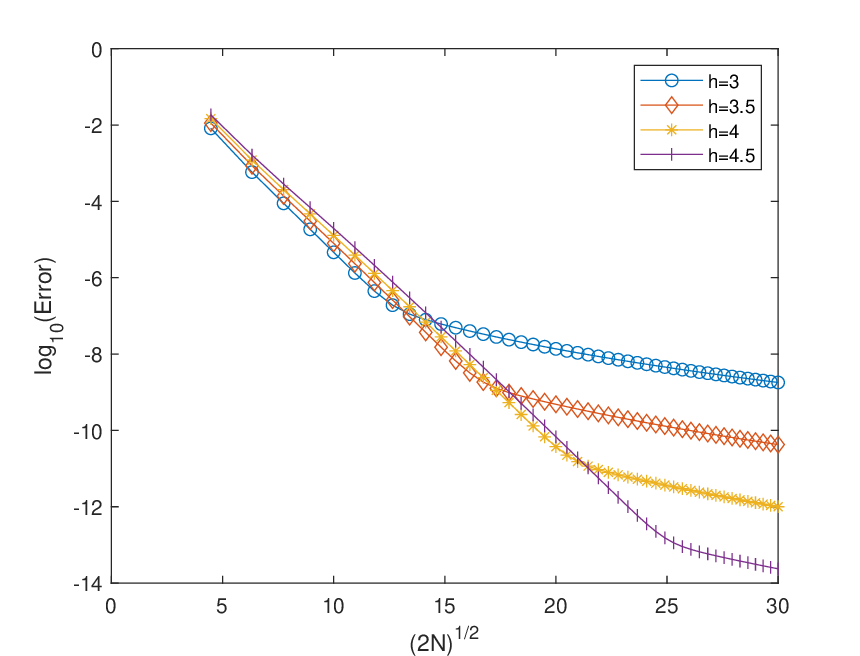}
  \caption{The $L^2$ error $\|u - u_{N,\beta}\|$ for problem \eqref{eq:0624-18} with $\gamma = 1$ and true solution $u = x/(1+x^2)^h$. Here $\beta = 1$.  }
  \label{fig:pre-asymptotic}
\end{figure}

From Table~\ref{tab:transition-points} and Figure~\ref{fig:pre-asymptotic}, we find transition
points $p$ perfectly match the position where the error transforms from sub-geometric to algebraic convergence. This proves the validity of our theory.

\subsection{Hermite versus Laguerre: different convergence behaviors}
\label{subsec: vs}
In Section 7.4.3 of~\cite{shen2011spectral}, the following test functions for 
the Laguerre and Hermite approximations are considered:
\begin{equation}\label{eq:shen-7.154}
  u(x)=\frac{\sin k x}{(1+x)^h} \text { for } x \in(0, \infty) \text { or } u(x)=\frac{\sin k x}{\left(1+x^2\right)^h} \text { for } x \in(-\infty, \infty) .
\end{equation}
Although these two functions are uniformly described as functions having
algebraic decay with oscillation at infinity in~\cite{shen2011spectral},
our theory shows that their approximation properties are completely different.
This once again proves that the approximation characterization in our framework
provides richer information than the classical theory.

Back to \eqref{eq:shen-7.154}, when using Hermite functions to approximate $u(x)=\frac{\sin k x}{\left(1+x^2\right)^h}$, it is easy to check that this function has algebraic decay in the spatial domain and exponential decay in the frequency domain. Hence, as in Section \ref{subsec: double conv}, a proper
scaling could double the convergence order.

However, when using Laguerre functions to approximate $u(x)=\frac{\sin k x}{(1+x)^h}$,
due to the equivalence, we need to analyze the error when using
generalized Hermite functions to approximate $u_H(x) = \frac{\sin k x^2}{(1+x^2)^h}$.

For spatial truncation error, when $\beta = 1$, we have
\begin{equation}\label{eq:0629-5.5-1}
  \left\|u_H \cdot \mathbb{I}_{\left\{|x|>\frac{\sqrt{N}}{2 \sqrt{3}}\right\}}\right\|_{|x|^{2\mu}} \lesssim N^{\frac{1}{4}+\frac{\mu}{2}-h}.
\end{equation}
For frequency truncation error, since $i^{s+r} \xi^s \partial^r \mathcal{F}[u_H](\xi) = \left(\mathcal{F}[\partial^s (u_Hx^r)](\xi)\right)$,
and $\partial^s (u_Hx^r)$ is in  $L^2$ for any $s < 2h-r-\frac{1}{2}$, we have
\begin{equation}\label{eq:0629-5.5-2}
  \begin{aligned}
    \left\|\partial^r \mathcal{F}\left[u_H\right](\xi) \cdot \mathbb{I}_{\left\{|\xi|>\frac{\sqrt{N}}{2 \sqrt{3}} \right\}}\right\| & \leqslant N^{-\frac{s}{2}} \left\|\xi^s \partial^r \mathcal{F}\left[u_H\right](\xi) \right\| \\
    & = N^{-\frac{s}{2}} \left\|\partial^s (u_H \cdot x^r)\right\|.
  \end{aligned}
\end{equation}
Hence for any $t > \frac{1}{4} + \frac{\mu}{2} -h$, the frequency truncation error defined in \eqref{eq:bigthm} satisfies
\begin{equation}\label{eq:0629-5.5-3}
  \begin{aligned}
    & \,\,\quad B^{-\mu+\frac{1}{2}} \left\| \mathcal{F}[u]\left(B \xi\right) \right\|_{H^{\mu}\left(\mathbb{R}\backslash [-1,1]\right)} \\
    & \leqslant B^{-\mu+\frac{1}{2}} \left\| \mathcal{F}[u]\left(B \xi\right) \right\|_{H^{\lfloor\mu\rfloor}\left(\mathbb{R}\backslash [-1,1]\right)}^{\frac{1}{p}} \times \left\| \mathcal{F}[u]\left(B \xi\right) \right\|_{H^{\lceil\mu\rceil}\left(\mathbb{R}\backslash [-1,1]\right)}^{\frac{1}{q}} \\
    & \lesssim \left(\sum_{r=0}^{\lfloor\mu\rfloor}\left\|\partial^r \mathcal{F}\left[u_H\right](\xi) \cdot \mathbb{I}_{\left\{|\xi|>\frac{\sqrt{N}}{2 \sqrt{3}} \right\}}\right\| \cdot( \sqrt{N})^{-(\lfloor\mu\rfloor-r)}\right)^{\frac{1}{p}} \\
    &\quad \times\left(\sum_{r=0}^{\lceil\mu\rceil}\left\|\partial^r \mathcal{F}\left[u_H\right](k) \cdot \mathbb{I}_{\left\{|\xi|>\frac{\sqrt{N}}{2 \sqrt{3}} \right\}}\right\| \cdot(\sqrt{N})^{-(\lceil\mu\rceil-r)}\right)^{\frac{1}{q}} \\
    & \lesssim_\mu N^t.
  \end{aligned}
\end{equation}
Here $p,q$ satisfy $\frac{1}{p}\lfloor \mu \rfloor + \frac{1}{q} \lceil \mu \rceil = 1$. Combining \eqref{eq:0629-5.5-1} and \eqref{eq:0629-5.5-3}, we know that
spatial and frequency truncation errors decay at almost the same algebraic
rate. Hence, the asymptotic convergence rate cannot be improved by scaling.
The best scaling factor to balance space and frequency is likely to be $O(1)$.
The following numerical experiment verifies our conclusion.

We solve the model problem \eqref{eq:0624-18} by the Laguerre--Galerkin
method defined in \eqref{eq:0624-20} with $\gamma = 1$, true solution
$u(x)=\sin 2 x \cdot (1+x)^{-\frac{7}{2}}$, the $L^2$ error $\|u - u_{N,\beta}\|$ is presented in Figure~\ref{fig:sin-x-h-beta-cmp}.

\begin{figure}[htbp]
  \centering
  \includegraphics[width=0.7\linewidth]{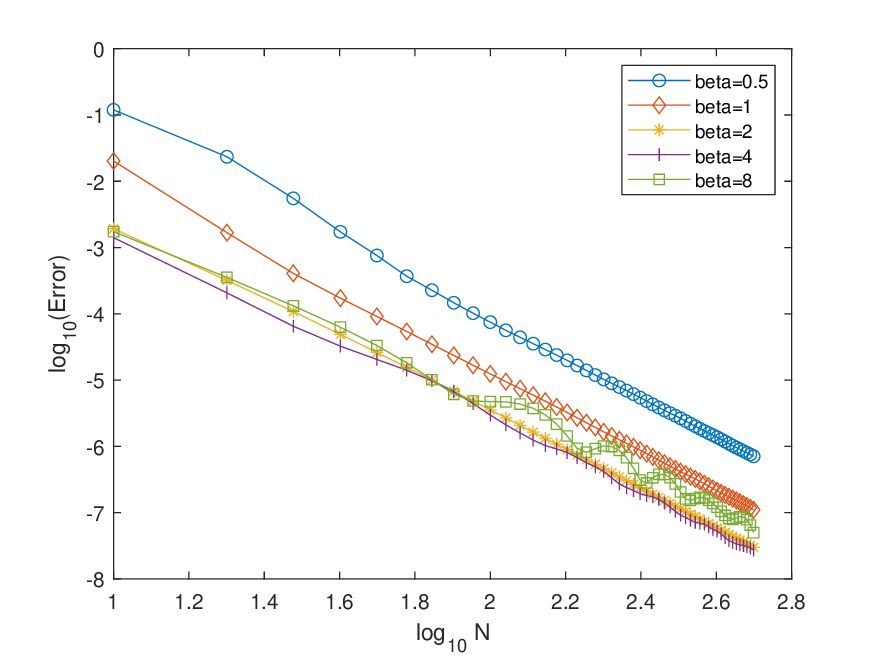}
  \caption{The $L^2$ error $\|u - u_{N,\beta}\|$ for problem \eqref{eq:0624-18} with $u = \sin 2x \cdot (1+x)^{-\frac{7}{2}}$ with different $\beta$.  }
  \label{fig:sin-x-h-beta-cmp}
\end{figure}

It can be seen from Figure~\ref{fig:sin-x-h-beta-cmp} that the best scaling factor is not
sensitive to the number of bases used, which is consistent with our error analysis.
Similar numerical results can be found in~\cite{huang2024improved}.

\subsection{Hermite versus Laguerre: Hermite or dual Laguerre?}
\label{subsec:1 Hermite vs 2 Laguerre}
This subsection aims to compare the performance of a single set of Hermite 
functions against two concatenated sets of Laguerre functions. 

It is intuitive that for functions exhibiting limited smoothness at zero (like $e^{-|x|}$) or disparate 
decay rates at $\pm \infty$, the use of two concatenated sets of Laguerre functions 
affords enhanced flexibility, which is conducive to improved approximation efficiency. 

However, in general, particularly with a scaling factor, there is no guarantee that 
two concatenated sets of Laguerre functions will outperform a single set of Hermite 
functions. For example, one might naturally expect that a function decaying at the rate of $e^{-x^2}$ is better approximated by Hermite functions ---which shares the same decay--- than by two concatenated sets of Laguerre functions.

Utilizing the error analysis framework developed in this paper, we will demonstrate that this intuition is incorrect. Specifically, we present the following theorem, which shows that for a wide class 
of functions decaying like $e^{-x^2}$, the approximation capability of two concatenated sets of Laguerre functions is no worse than---and may potentially surpass---that of a single set Hermite functions.

\begin{theorem}\label{thm:1201-1}
  For $g(x)$ bounded and analytic in the strip domain $-a \leqslant \operatorname{Im}(x) \leqslant a$,
  $u(x) = e^{-x^2}g(x)$, let scaling factor $\beta = N^{\frac{1}{6}}$, then approximating $u(x)$
  by a set of Hermite functions yields the following result
  \begin{equation}\label{eq:1201-1}
    \left\|u-\widehat{\Pi}_N^{\left(\mu, \beta\right)}u\right\|_{|x|^{2 \mu}} \lesssim_{\mu, u} e^{-c N^{\frac{2}{3}}}.
  \end{equation}
  Moreover, when $g(x)$ has singularities, the convergence rate estimated in
  \eqref{eq:1201-1} cannot be improved to $e^{-cN^\alpha}$ for $\alpha > \frac{2}{3}$ via Theorem~\ref{thm:bigthm}.
  
  In contrast, when approximating $u(x)$ using two sets of concatenated Laguerre functions, taking $\beta = N^{\frac{2}{3}}$ yields a convergence rate  no worse than that in \eqref{eq:1201-1}. This holds for both segments of $u(x)$. For instance, the following estimate holds for the part where $x \geqslant 0$
  \begin{equation}\label{eq:1201-2}
    \left\|u-\widehat{\Pi}_N^{\left(\mu, \beta, L\right)} u\right\|_{x^{\mu}} \lesssim_{\mu, u} e^{-c N^{\frac{2}{3}}}.
  \end{equation}
  Moreover, when $g(x)$ has singularities, the convergence rate estimated in
  \eqref{eq:1201-2} may still be further improved. For instance, if $g(x^2)$ remains a bounded analytic function in the strip region $-b \leqslant \operatorname{Im}(x) \leqslant b$, then by setting
  $\beta = N^{\frac{3}{5}}$, the following error estimate holds for the part of $u(x)$ where $x \geqslant 0$
  \begin{equation}\label{eq:1201-3}
    \left\|u-\widehat{\Pi}_N^{\left(\mu, \beta, L\right)}u\right\|_{x^{\mu}} \lesssim_{\mu, u} e^{-c N^{\frac{4}{5}}}.
  \end{equation}
  An analogous result applies to the part where $x < 0$.
\end{theorem}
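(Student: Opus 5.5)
The plan is to apply Theorem~\ref{thm:bigthm} (Hermite) and Corollary~\ref{cor:Laguerre-proj} (Laguerre) directly. In each case we estimate three things: the spatial decay of the target, the Fourier decay of the target together with its derivatives up to order $\mu$ or $\mu+\frac12$, and then a choice of $\beta$ that balances the spatial and frequency truncation errors. The spectral term $e^{-N/24}(\cdots)$ is harmless in every case, since $\|u\|$ is fixed and $M^\mu$ grows only polynomially.

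\textbf{Hermite case.} Since $u=e^{-x^2}g$ with $g$ bounded, $\|u\,\mathbb{I}_{\{|x|>M\}}\|_\omega\lesssim e^{-cM^2}$, and with $M\sim\sqrt N/\beta$ this is $e^{-cN/\beta^2}$. For the frequency side, $u$ is analytic and bounded in the strip $|\operatorname{Im}x|\le a$, and $e^{-x^2}$ keeps it integrable along horizontal lines. Shifting the contour by $\pm a$ gives $|\partial^r\mathcal F[u](\xi)|\lesssim e^{-a|\xi|}$; the factors $x^r$ are absorbed by the Gaussian. This yields a frequency truncation error of $e^{-cB}=e^{-c\sqrt N\beta}$, up to polynomial factors from $B^{-\mu+\frac12}$ and the $H^\mu$ norm. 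Balancing $N/\beta^2=\sqrt N\beta$ gives $\beta=N^{1/6}$, which proves \eqref{eq:1201-1}. For the sharpness claim, note two facts. If $g$ has a singularity at distance $a$ from $\mathbb R$, then $\mathcal F[u]$ decays no faster than $e^{-(a+\epsilon)|\xi|}$, so the frequency term is genuinely of size $e^{-c\sqrt N\beta}$. The spatial term is genuinely $e^{-cN/\beta^2}$ whenever $g$ does not vanish identically at infinity. Hence $\min_\beta\max(e^{-cN/\beta^2},e^{-c\sqrt N\beta})=e^{-cN^{2/3}}$, and the bound of Theorem~\ref{thm:bigthm} cannot produce a better rate.

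\textbf{Laguerre case.} By Corollary~\ref{cor:Laguerre-proj}, we study $u_H(x)=u(x^2)$ for the half $x\ge0$, which is $e^{-x^4}g(x^2)$ symmetrically extended; the Laguerre scaling $\beta$ corresponds to $M=\sqrt{N/(6\beta)}$ and $B=\sqrt{\beta N/6}$. The spatial error is $e^{-cM^4}=e^{-cN^2/\beta^2}$. For the frequency error, $g(x^2)$ is analytic in a strip of width $b'\sim a/\max(\text{Re }x)$ near large $|x|$, which degrades, so a uniform strip argument fails. The workaround is to use the weaker information that $e^{-x^4}g(x^2)$ is entire-like in a region $|\operatorname{Im}x|\lesssim a/|\operatorname{Re}x|$. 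Deforming the contour to $\operatorname{Im}x=\pm\delta$ with $\delta$ fixed small, one still has analyticity wherever $|x|\lesssim a/\delta$, while $e^{-x^4}$ kills the region beyond. This gives at least $e^{-c|\xi|}$ decay, so the frequency error is $e^{-c\sqrt{\beta N}}$. Balancing $N^2/\beta^2=\sqrt{\beta N}$ gives $\beta=N^{3/5}$ and the rate $e^{-cN^{4/5}}$. The statement instead claims only $e^{-cN^{2/3}}$ at $\beta=N^{2/3}$ for general $g$. To be safe, the plan for that claim uses the cruder estimate from the original strip of $g$: after the substitution, the half-line function is analytic only in a parabolic region, whose Fourier decay is like $e^{-c|\xi|^{2/3}}$. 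Using saddle-point or contour shift to $\operatorname{Im}x\sim|\xi|^{-1/3}$ gives frequency error $e^{-c(\beta N)^{1/3}}$. With $\beta=N^{2/3}$, both errors are then $e^{-cN^{2/3}}$, which proves \eqref{eq:1201-2}. Under the extra hypothesis that $g(x^2)$ is analytic in a strip, the uniform strip argument applies, the frequency decay is $e^{-b|\xi|}$, and the computation above with $\beta=N^{3/5}$ gives \eqref{eq:1201-3}. The case $x<0$ is identical after the reflection $x\mapsto -x$.

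\textbf{Main obstacle.} The delicate step is the Fourier decay estimate for $e^{-x^4}g(x^2)$ when $g$ is only analytic in a strip. One must choose the contour shift depending on $\xi$ and control $|g(x^2)|$ on that contour: the domain $\{x:|\operatorname{Im}x^2|\le a\}$ is $|\operatorname{Im}x|\le a/(2|\operatorname{Re}x|)$ roughly, and the shift must balance $e^{-\xi\operatorname{Im}x}$ against $|e^{-x^4}|$, whose real part loses $6(\operatorname{Re}x)^2(\operatorname{Im}x)^2$. We also need the same bound for $\partial^r\mathcal F$ up to order $\mu+\frac12$. That is routine once the contour is fixed, since multiplying by $x^r$ only adds polynomial factors, and fractional $\mu$ is handled by interpolation as in the proof of Theorem~\ref{thm:bigthm}.
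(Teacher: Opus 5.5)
Your Hermite argument and your treatment of \eqref{eq:1201-3} are fine and follow the paper. In both, strip analyticity gives $e^{-c|\xi|}$ Fourier decay, and balancing it against the spatial truncation error does the rest. The gap is in \eqref{eq:1201-2}, and it comes from two separate problems.

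First, your ``fixed $\delta$'' contour shift does not give $e^{-c|\xi|}$ decay for general $g$. You only know that $g(x^2)$ is analytic where $2|\operatorname{Re}x\,\operatorname{Im}x|\le a$. So on the line $\operatorname{Im}x=-\delta$ you may only integrate over $|\operatorname{Re}x|\le T=a/(2\delta)$. The real-axis tails $|\operatorname{Re}x|>T$ stay where they are and contribute about $e^{-T^4}$, which does not depend on $\xi$. If your claim were true, \eqref{eq:1201-3} would hold for every $g$, and its extra hypothesis would be pointless.

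Second, your fallback bound $e^{-c|\xi|^{2/3}}$ is valid but too weak, and the arithmetic built on it is wrong. It gives a frequency truncation error of $e^{-c(\beta N)^{1/3}}$. For $\beta=N^{2/3}$ this is $e^{-cN^{5/9}}$, not $e^{-cN^{2/3}}$. Even the best balance, $N^2/\beta^2=(\beta N)^{1/3}$, yields only $e^{-cN^{4/7}}$. So your plan does not establish \eqref{eq:1201-2}.

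The missing ingredient is the sharp estimate $|\mathcal{F}[x^re^{-x^4}g(x^2)](\xi)|\lesssim e^{-c|\xi|^{4/5}}$. For $\xi>0$, the paper moves the contour all the way to the lower boundary of the analyticity region, $x=t-\frac{ia}{2|t|}$. On this curve:
\begin{itemize}
\item $|g(x^2)|\le G$;
\item $\operatorname{Re}x^4=t^4+(a/2t)^4-\frac32a^2$;
\item $|e^{-ix\xi}|=e^{-a\xi/(2|t|)}$.
\end{itemize}
The integrand is then dominated by $e^{-\varphi(t)}$ with $\varphi(t)=\frac12t^4+\frac{a\xi}{2|t|}$. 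Its minimum $\frac52(a\xi/4)^{4/5}$, attained at $t_0=(a\xi/4)^{1/5}$, gives the bound through a Laplace-type estimate.

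In your horizontal-shift language, this amounts to letting the shift depend on $\xi$: take $\delta\sim\xi^{-1/5}$ and $T\sim\xi^{1/5}$, so that $e^{-\delta\xi}$ and $e^{-T^4}$ balance. This is exactly the trade-off you name under ``Main obstacle'', but you never optimize it, and your choice $\delta\sim\xi^{-1/3}$ is suboptimal. With decay exponent $4/5$ and $B=\sqrt{\beta N/6}$, the frequency error becomes $e^{-c(\beta N)^{2/5}}$. Then $\beta=N^{2/3}$ makes both it and the spatial error $e^{-cN^2/\beta^2}$ equal to $e^{-cN^{2/3}}$, as claimed.
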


The proof of this theorem can be found in Appendix~\ref{appendix:C}.

We verify the above results by comparing the errors from Hermite and Laguerre quadratures
for $\int_{-\infty}^{+\infty} \frac{e^{-x^2}}{1+16x^2} dx$.
Let $N$ be an odd integer. We apply Gauss--Laguerre--Radau quadrature with
$\frac{N+1}{2}$ nodes on each semi-axis to achieve a total of $N$ quadrature points, such that a direct comparison with Hermite quadrature under an identical node count is ensured.
Here, the quadrature rules correspond to the narrow-sense Hermite and Laguerre functions, specifically those denoted by $\widehat{H}_n(x)$ and $\widehat{\mathscr{L}}_n(x)$.

Since here $u(x) = \frac{e^{-x^2}}{1+16x^2}$, $g(x) = e^{x^2}u(x)$ satisfies
$g(x^2)$ is still a bounded and analytic function in the strip domain
$-b \leqslant \operatorname{Im}(x) \leqslant b$ for $b < \frac{1}{2\sqrt{2}}$,
it follows from Theorem~\ref{thm:1201-1} that the asymptotically optimal convergence rates of Hermite and Laguerre approximations are $e^{-cN^{\frac{2}{3}}}$ and $e^{-cN^{\frac{4}{5}}}$, respectively.

Similar to Section~\ref{subsec: puzzling convergence}, balancing the spatial and frequency errors 
outside the interpolation nodes leads to asymptotically optimal scaling factors of approximately
$\beta_{opt}^H = 2^{\frac{5}{6}}N^{\frac{1}{6}}$ for Hermite quadrature and
$\beta_{opt}^L = 2^{\frac{5}{6}}N^{\frac{3}{5}}$ for Laguerre quadrature.

The error results with optimal scaling factors $\beta_{opt}^H$ for Hermite and $\beta_{opt}^L$ for Laguerre quadrature are shown below, where $N$ is also taken from $11$ to $211$.
\begin{figure}[H]
\centering
\subfigure[Hermite quadrature errors with $N^{\frac{2}{3}}$ on the horizontal axis]{
  \begin{minipage}[t]{0.47\textwidth}
  \centering
  \includegraphics[width=\textwidth]{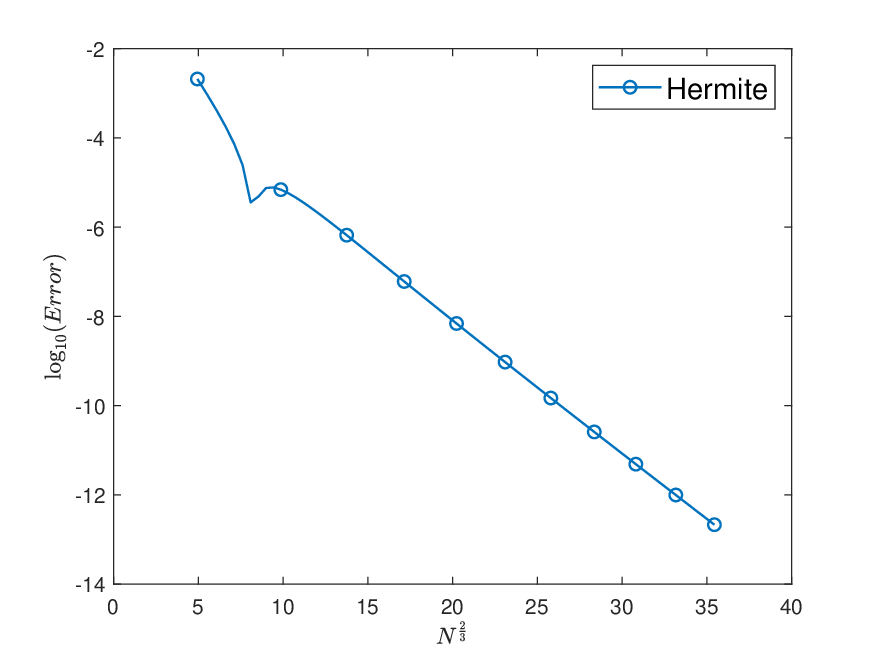}
  \label{fig:N23}
  \end{minipage}
}
\subfigure[Laguerre quadrature errors with $N^{\frac{4}{5}}$ on the horizontal axis]{
  \begin{minipage}[t]{0.47\textwidth}
  \centering
  \includegraphics[width=\textwidth]{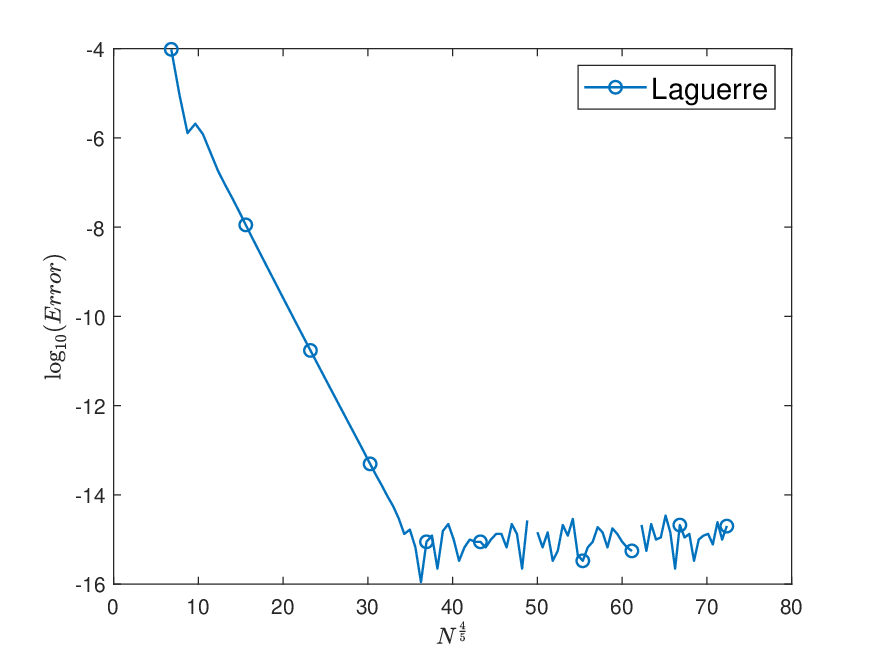}
  \label{fig:N45}
  \end{minipage}
}
\caption{Comparison of Hermite and Laguerre quadrature errors with optimal scaling factors}
\end{figure}

Figure~\ref{fig:N23} and Figure~\ref{fig:N45} verify the convergence orders 
of the optimal Hermite and Laguerre approximations, as given by \eqref{eq:1201-1} and \eqref{eq:1201-3}.

Similar to the above process, one can examine more examples where two optimally scaled, spliced Laguerre functions achieve comparable or superior approximation to a single optimally scaled Hermite function.

This pattern suggests that two spliced Laguerre functions 
under optimal scaling might be systematically superior. 
An intriguing question for future exploration is whether 
this can be proven as a general theorem.

\section{Conclusions}
\label{sec:conclusion}
In this paper, we propose a novel error analysis framework for the generalized Hermite and generalized  Laguerre approximation with scaling factors. 
It can be viewed as an analogue of the Nyquist-Shannon sampling theorem: 
we characterize the spatial and frequency bandwidth that can be 
captured by the generalized Hermite/Laguerre sampling points, and as long as the 
function satisfies the bandwidth restriction, it can be well approximated. Our characterization is 
more powerful than classical theory. As demonstrated by numerical experiments, it can not only systematically guide scaling, but also predict root-exponential and other more complex convergence behaviors
that cannot be characterized by classical theory. Using this characterization, we conducted an in-depth comparison of the behavior of Hermite and Laguerre approximations.
We find that functions with similar decay and oscillation characteristics can exhibit different convergence behaviors. Moreover, approximations using two concatenated sets of Laguerre functions may offer greater advantages compared to those using a single set of Hermite functions.

\backmatter

\bmhead{Acknowledgements}

We would like to acknowledge the financial support from the National Natural Science Foundation of China (grant
no. 12494543, 12171467) and
the Strategic Priority Research Program of the Chinese Academy of Sciences (grant no. XDA0480504).

\begin{appendices}
\section{Proof of \autoref{thm:0826-2}}
\label{appendix:A}
To prove Theorem~\ref{thm:0826-2}, we need the following lemma:
\begin{lemma}\label{lem:0826-1}
  Suppose that
  \begin{displaymath}
    u=\sum_{n=0}^{+\infty} c_n \widehat{H}_n^{(\mu)}, \quad \partial_x u=\sum_{n=0}^{+\infty} d_n \widehat{H}_n^{(\mu)},
  \end{displaymath}
  then
  \begin{equation}\label{eq:0826-1}
    \sum_n\left|d_n\right|^2 \lesssim_\mu \sum_n\left|c_n\right|^2 \cdot(n+1).
  \end{equation}
\end{lemma}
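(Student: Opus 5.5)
The plan is to expand $\partial_x u$ term by term using Lemma~\ref{lem:derivative recurrence} and to read off $d_n$ as a linear combination of the $c_n$. I will then bound the resulting linear operator on coefficient sequences, working in the weighted $\ell^2$ setting where $e_n := c_n\sqrt{n+1}$. The goal is $\sum_n|d_n|^2\lesssim_\mu\sum_n|e_n|^2$. By density it suffices to take finitely many nonzero $c_n$; the general case then follows by a limiting argument, since $\partial_x$ is closed and the bound controls the limit. I split $\partial_x u = S_1 + S_2$, where $S_1$ collects the two "local" terms of \eqref{eq:0823-6}–\eqref{eq:0823-7} and $S_2$ collects the tail sum appearing for odd $n$.

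\emph{Local part.} For every $n$, the local terms contribute to the $(n\pm1)$-th coefficient with weights bounded by $\sqrt{(n+1+2\mu)/2}\lesssim_\mu\sqrt{n+1}$. Hence, by $(a+b)^2\le 2a^2+2b^2$, the coefficients of $S_1$ satisfy $\sum|d^{(1)}_n|^2\lesssim_\mu \sum |c_n|^2(n+1)$. This step is routine.

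\emph{Tail part (main obstacle).} For odd $n=2m+1$, the extra term in \eqref{eq:0823-7} feeds into every even coefficient $d_{2k}$ with $k\le m$. Using Stirling's formula, the coefficient has size
\[
\sqrt{\tfrac{m!}{\Gamma(m+\mu+3/2)}}\,\sqrt{\tfrac{\Gamma(k+\mu+1/2)}{k!}}\;\sim_\mu\; (m+1)^{-a}(k+1)^{b},\qquad a=\tfrac{\mu+1/2}{2},\ b=\tfrac{\mu-1/2}{2},
\]
so that $a-b=\tfrac12$. Writing $c_{2m+1}=e_{2m+1}(2m+2)^{-1/2}$, one gets
\[
|d^{(2)}_{2k}|\lesssim_\mu \sum_{m\ge k} K(k,m)\,|e_{2m+1}|,\qquad K(k,m)=(k+1)^{b}(m+1)^{-b-1}.
\]
It remains to show that this Hardy-type kernel is bounded on $\ell^2$. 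I will use the Schur test with test function $h(x)=(x+1)^{-1/2}$. The two required sums are
\[
\sum_{m\ge k}K(k,m)(m+1)^{-1/2}\lesssim (k+1)^{b}(k+1)^{-b-1/2}=(k+1)^{-1/2},
\]
which needs $b+\tfrac12>0$, and
\[
\sum_{k\le m}K(k,m)(k+1)^{-1/2}\lesssim (m+1)^{-b-1}(m+1)^{b+1/2}=(m+1)^{-1/2},
\]
which needs $b-\tfrac12>-1$. Both conditions reduce to $\mu>-\tfrac12$, so they hold for $\mu\ge0$. For $\mu=0$ the tail term vanishes altogether. The Schur test then yields $\sum_k|d^{(2)}_{2k}|^2\lesssim_\mu\sum_m|e_{2m+1}|^2$.

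Combining the two parts with the triangle inequality gives \eqref{eq:0826-1}. The care needed lies in the Stirling asymptotics, which must be uniform down to small indices; I handle this by replacing $m$ and $k$ with $m+1$ and $k+1$ and absorbing constants into $\lesssim_\mu$. I also need the exponents to land exactly so that $a+\tfrac12=b+1$. This identity is what makes the kernel homogeneous of degree $-1$, and hence bounded on $\ell^2$.
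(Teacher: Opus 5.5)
Your proposal is correct and follows essentially the paper's route. Both read off $d_n$ from Lemma~\ref{lem:derivative recurrence}, bound the two neighbouring terms trivially, and control the tail $\sum_{k\ge n/2} w_{n,2k+1}c_{2k+1}$ that feeds the even coefficients by a power-weighted argument whose only requirement is $\mu>-\tfrac12$. The paper handles this last step with a weighted Cauchy--Schwarz using weights $(2k+1)^{\pm\alpha}$, $\alpha=1-\tfrac12(\tfrac12+\mu)$, and then interchanges the order of summation; this is your Schur test with a power test function written out by hand. Your density and closedness remark adds a bit of rigor that the paper leaves implicit.
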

\begin{proof}
  By Lemma~\ref{lem:derivative recurrence}, when $2 \mid n$,
  \begin{equation}\label{eq:0826-2}
    \partial_x \widehat{H}_n^{(\mu)}=-\sqrt{\frac{n+1+2 \mu}{2}} \widehat{H}_{n+1}^{(\mu)}+\sqrt{\frac{n}{2}} \widehat{H}_{n-1}^{(\mu)},
  \end{equation}
  hence for $2 \nmid n$,
  \begin{equation}\label{eq:0826-3}
    d_n=-\sqrt{\frac{n+2 \mu}{2}} c_{n-1}+\sqrt{\frac{n+1}{2}} c_{n+1}.
  \end{equation}
  Then we have
  \begin{equation}\label{eq:0826-14}
    \sum_{2 \nmid n}\left|d_n\right|^2 \lesssim_\mu \sum_{2 \mid n}\left|c_n\right|^2 \cdot(n+1).
  \end{equation}
  When $2 \nmid n$, by Lemma~\ref{lem:derivative recurrence},
  \begin{equation}\label{eq:0826-5}
    \begin{aligned}
    \partial_x \widehat{H}_n^{(\mu)} & =-\sqrt{\frac{n+1}{2}} \widehat{H}_{n+1}^{(\mu)}+\sqrt{\frac{n+2 \mu}{2}} \widehat{H}_{n-1}^{(\mu)} \\
    & +2 \mu(-1)^{\frac{n-3}{2}} \times \sqrt{\frac{\left(\frac{n-1}{2}\right)!}{\Gamma\left(\frac{n}{2}+\mu+1\right)}} \\
    & \times \sum_{k=0}^{\frac{n-1}{2}}(-1)^k \sqrt{\frac{\Gamma\left(k+\mu+\frac{1}{2}\right)}{k!}} \widehat{H}_{2 k}^{(\mu)}.
    \end{aligned}
  \end{equation}
  Hence for $2 \mid n$,
  \begin{equation}\label{eq:0826-6}
    d_n=-\sqrt{\frac{n}{2}} c_{n-1}+\sqrt{\frac{n+1+2 \mu}{2}} c_{n+1}+\sum_{k=\frac{n}{2}}^{+\infty} w_{n, 2 k+1} c_{2 k+1},
  \end{equation}
  where $w_{n, 2k+1}$ is defined as
  \begin{equation}\label{eq:0826-7}
    w_{n, 2 k+1}=2 \mu(-1)^{k-1+\frac{n}{2}} \sqrt{\frac{k!}{\Gamma\left(k+\mu+\frac{3}{2}\right)}} \sqrt{\frac{\Gamma\left(\frac{n}{2}+\mu+\frac{1}{2}\right)}{\left(\frac{n}{2}\right)!}}.
  \end{equation}
  From \eqref{eq:0826-6} we know
  \begin{equation}\label{eq:0826-8}
    \left|d_n\right|^2 \lesssim_\mu \left|c_{n-1}\right|^2 \cdot n+\left|c_{n+1}\right|^2 \cdot(n+2)+\left|\sum_{k=\frac{n}{2}}^{+\infty} w_{n, 2 k+1} c_{2 k+1}\right|^2.
  \end{equation}
  Let $\alpha = 1 - \frac{1}{2}\left(\frac{1}{2}+\mu\right)$, since $\mu > -\frac{1}{2}$,
  $\alpha < 1$, by Cauchy--Schwarz inequality,
  \begin{equation}\label{eq:0826-9}
    \begin{aligned}
    \left|\sum_{k=\frac{n}{2}}^{+\infty} w_{n, 2 k+1} c_{2 k+1}\right|^2 &\leqslant {\left[\sum_{k=\frac{n}{2}}^{+\infty}\left|w_{n, 2 k+1}\right|^2 \cdot(2 k+1)^{-\alpha}\right]} \\
    &\times {\left[\sum_{k=\frac{n}{2}}^{+\infty}\left|c_{2 k+1}\right|^2 \cdot(2 k+1)^\alpha\right] }
    \end{aligned}
  \end{equation}
  From \eqref{eq:0826-7} we know
  \begin{equation}\label{eq:0826-10}
    \left|w_{n, 2 k+1}\right|^2 \lesssim_\mu(k+1)^{-\frac{1}{2}-\mu}(1+n)^{-\frac{1}{2}+\mu},
  \end{equation}
  hence
  \begin{equation}\label{eq:0826-11}
    \sum_{k=\frac{n}{2}}^{+\infty}\left|w_{n, 2 k+1}\right|^2(2 k+1)^{-\alpha} \lesssim_\mu(1+n)^{-\frac{1}{2}+\mu} \sum_{k=\frac{n}{2}}^{+\infty}(k+1)^{-\alpha-\left(\frac{1}{2}+\mu\right)}.
  \end{equation}
  Recall that $\alpha=1-\frac{1}{2}\left(\frac{1}{2}+\mu\right), \mu>-\frac{1}{2}$, we have
  $\alpha+\frac{1}{2}+\mu > 1$,
  \begin{equation}\label{eq:0826-12}
    \sum_{k=\frac{n}{2}}^{+\infty}(k+1)^{-\alpha-\left(\frac{1}{2}+\mu\right)} \lesssim_\mu(1+n)^{-\alpha+\frac{1}{2}-\mu}.
  \end{equation}
  Combining \eqref{eq:0826-12} with \eqref{eq:0826-11} yields
  \begin{displaymath}
    \sum_{k=\frac{n}{2}}^{+\infty}\left|w_{n, 2 k+1}\right|^2(2 k+1)^{-\alpha} \lesssim_\mu(1+n)^{-\alpha}.
  \end{displaymath}
  Putting the above formula back into \eqref{eq:0826-9}, we get
  \begin{equation}\label{eq:0826-12.5}
    \left|\sum_{k=\frac{n}{2}}^{+\infty} w_{n, 2 k+1} c_{2 k+1}\right|^2 \lesssim_\mu(1+n)^{-\alpha} \sum_{k=\frac{n}{2}}^{+\infty}\left|c_{2 k+1}\right|^2 \cdot(2 k+1)^\alpha.
  \end{equation}
  Combining \eqref{eq:0826-12.5} with \eqref{eq:0826-8}, we know that for $2 \mid n$,
  \begin{equation}\label{eq:0826-13}
    \begin{aligned}
    \left|d_n\right|^2 & \lesssim_\mu\left|c_{n-1}\right|^2 \cdot n+\left|c_{n+1}\right|^2 \cdot(n+2) \\
    & +(1+n)^{-\alpha} \sum_{k=\frac{n}{2}}^{+\infty}\left|c_{2 k+1}\right|^2 \cdot(2 k+1)^\alpha,
    \end{aligned}
  \end{equation}
  hence
  \begin{equation}\label{eq:0826-15}
    \sum_{2 \mid n}|d_n|^2 \lesssim_\mu \sum_{2 \nmid n}\left|c_n\right|^2 \cdot\left[(n+1)+n^\alpha \sum_{k=0}^{\frac{n-1}{2}}(1+2 k)^{-\alpha}\right].
  \end{equation}
  Recall that $\mu>-\frac{1}{2}, \alpha=1-\frac{1}{2}\left(\frac{1}{2}+\mu\right)<1$,
  hence
  \begin{equation}\label{eq:0826-16}
    \sum_{k=0}^{\frac{n-1}{2}}(1+2 k)^{-\alpha} \lesssim_\mu n^{1-\alpha}.
  \end{equation}
  Putting \eqref{eq:0826-16} back into \eqref{eq:0826-15} we have
  \begin{equation}\label{eq:0826-17}
    \sum_{2 \mid n}|d_n|^2 \lesssim_\mu \sum_{2 \nmid n}\left|c_n\right|^2 \cdot(n+1).
  \end{equation}
  Combining \eqref{eq:0826-14} with \eqref{eq:0826-17} completes our proof.
\end{proof}
Now we return to the proof of Theorem~\ref{thm:0826-2}.
\begin{proof}
  We proceed with the proof by using an induction argument.
  It is clear that \eqref{eq:0826-19} holds for $l = 0$.
  Assume that $\eqref{eq:0826-19}$ holds for $0,...,l-1$. Let
  \begin{equation}\label{eq:0826-20}
    \begin{aligned}
    \partial_x^{l-1}\left(u-\widehat{\Pi}_N^{(\mu, \beta)} u\right) & =\sum_{n=0}^{+\infty} c_n \widehat{H}_n^{(\mu)}(\beta x) \\
    \partial_x^l\left(u-\widehat{\Pi}_N^{(\mu, \beta)} u\right) & =\sum_{n=0}^{+\infty} d_n \widehat{H}_n^{(\mu)}(\beta x).
    \end{aligned}
  \end{equation}
  Let $y=\beta x, e=\partial_x^{l-1}\left(u-\widehat{\Pi}_N^{(\mu, \beta)} u\right)$, then
  \begin{equation}\label{eq:0826-21}
    \begin{aligned}
    e & =\sum_{n=0}^{+\infty} c_n \widehat{H}_n^{(\mu)}(y) \\
    \partial_y e & =\frac{1}{\beta} \partial_x e \\
    & =\sum_{n=0}^{+\infty} \frac{1}{\beta} d_n \widehat{H}_n^{(\mu)}(y).
    \end{aligned}
  \end{equation}
  Using Lemma~\ref{lem:0826-1} yields
  \begin{equation}\label{eq:0826-22}
    \sum_{n=0}^{+\infty}\left|d_n\right|^2 \lesssim_\mu \sum_{n=0}^{+\infty}\left|c_n\right|^2 \cdot(n+1) \cdot \beta^2.
  \end{equation}
  Notice that
  \begin{equation}\label{eq:0826-23}
    \begin{aligned}
      \sum_{n=0}^{+\infty}\left|c_n\right|^2 \cdot(n+1) &=\sum_{n=0}^{N+l}\left|c_n\right|^2 \cdot(n+1)+\sum_{n>N+l}\left|c_n\right|^2 \cdot(n+1) \\
      &\triangleq P_1 + P_2.
    \end{aligned}
  \end{equation}
  For $P_1$,
  \begin{equation}\label{eq:0826-24}
    \sum_{n=0}^{N+l}\left|c_n\right|^2 \cdot(n+1) \lesssim_l N \cdot \sum_{n=0}^{+\infty}\left|c_n\right|^2.
  \end{equation}
  Since
  \begin{equation}\label{eq:0826-25}
    \sum_{n=0}^{+\infty}\left|c_n\right|^2=\beta \cdot\left\|\partial_x^{l-1}\left(u-\widehat{\Pi}_N^{(\mu, \beta)} u\right)\right\|_{|x|^{2 \mu}}^2,
  \end{equation}
  by induction hypothesis, we have
  \begin{equation}\label{eq:0826-26}
    \sum_{n=0}^{+\infty}\left|c_n\right|^2 \lesssim_{\mu, l} \left(\int_N^{+\infty}|f(x)|^2 \cdot x^{l-1} d x\right) \cdot \beta^{2 l-1}.
  \end{equation}
  Combining with \eqref{eq:0826-24} yields
  \begin{equation}\label{eq:0826-27}
    \sum_{n=0}^{N+l}\left|c_n\right|^2 \cdot(n+1) \lesssim_{\mu, l} N \cdot \int_N^{+\infty}|f(x)|^2 \cdot x^{l-1} d x \cdot \beta^{2 l-1}.
  \end{equation}
  For $P_2$ defined in \eqref{eq:0826-23}, we have
  \begin{equation}\label{eq:0826-28}
    \sum_{n>N+l}\left|c_n\right|^2 \cdot(n+1)=(N+l+1) \sum_{n > N+l}\left|c_n\right|^2+\sum_{n \geqslant N+l} \sum_{m>n}\left|c_m\right|^2.
  \end{equation}
  By Lemma~\ref{lem:derivative recurrence}, we know that
  \begin{equation}\label{eq:0826-28.5}
    \partial_x^{l-1} \left(\widehat{\Pi}_{n-l+1}^{(\mu, \beta)} u\right) = \sum_{k = 0}^{n} a_k \widehat{H}_k^{(\mu)}(\beta x),
  \end{equation}
  then combine with \eqref{eq:0826-20} we get
  \begin{equation}\label{eq:0826-29}
    \widehat{\Pi}_n^{(\mu, \beta)}\left(\partial_x^{l-1}\left(u-\widehat{\Pi}_{n-l+1}^{(\mu, \beta)} u\right)\right)=\sum_{m>n} c_m \widehat{H}_m^{(\mu)}(\beta x).
  \end{equation}
  By induction hypothesis,
  \begin{equation}\label{eq:0826-30}
    \begin{aligned}
    \sum_{m>n}\left|c_m\right|^2 & =\beta \cdot\left\|\widehat{\Pi}_n^{(\mu, \beta)}\left(\partial_x^{l-1}\left(u-\widehat{\Pi}_{n-l+1}^{(\mu, \beta)} u\right)\right)\right\|_{|x|^{2 \mu}}^2 \\
    & \leqslant \beta \cdot\left\|\partial_x^{l-1}\left(u-\widehat{\Pi}_{n-l+1}^{(\mu, \beta)} u\right)\right\|_{|x|^{2 \mu}}^2 \\
    & \lesssim{ }_{\mu, l} \int_{n-l+1}^{+\infty}|f(x)|^2 \cdot x^{l-1} d x \cdot \beta^{2 l-1}.
    \end{aligned}
  \end{equation}
  Putting \eqref{eq:0826-30} back into \eqref{eq:0826-28} yields
  \begin{equation}\label{eq:0826-31}
    \begin{aligned}
    \sum_{n \geqslant N+l} \sum_{m>n}\left|c_m\right|^2 & \lesssim_{\mu, l} \sum_{n \geqslant N+l} \int_{n-l+1}^{+\infty}|f(x)|^2 \cdot x^{l-1} d x \cdot \beta^{2 l-1} \\
    & \lesssim_{\mu, l} \int_N^{+\infty} \int_y^{+\infty}|f(x)|^2 \cdot x^{l-1} d x \cdot \beta^{2 l-1} d y \\
    & =\beta^{2 l-1} \cdot\left(\int_N^{+\infty} |f(x)|^2 \cdot x^l d x-N \int_N^{+\infty}|f(x)|^2 \cdot x^{l-1} d x\right).
    \end{aligned}
  \end{equation}
  Similar to \eqref{eq:0826-27}, we have
  \begin{equation}\label{eq:0826-32}
    (N+l+1) \sum_{n>N+l}\left|c_n\right|^2 \lesssim_{\mu, l} N \cdot \int_N^{+\infty}|f(x)|^2 \cdot x^{l-1} d x \cdot \beta^{2 l-1}.
  \end{equation}
  Putting \eqref{eq:0826-31}, \eqref{eq:0826-32} back into \eqref{eq:0826-28} yields
  \begin{equation}\label{eq:0826-33}
    \sum_{n>N+l}\left|c_n\right|^2 \cdot(n+1) \lesssim_{\mu, l} \beta^{2 l-1} \int_N^{+\infty}|f(x)|^2 \cdot x^l d x.
  \end{equation}
  Combining \eqref{eq:0826-27} with \eqref{eq:0826-33}, we have
  \begin{equation}\label{eq:0826-34}
    \sum_{n=0}^{+\infty}\left|c_n\right|^2 \cdot(n+1) \lesssim_{\mu, l} \beta^{2 l-1} \int_N^{+\infty}|f(x)|^2 \cdot x^l d x.
  \end{equation}
  Putting \eqref{eq:0826-34} back into \eqref{eq:0826-22}, we get
  \begin{equation}\label{eq:0826-35}
    \begin{aligned}
    \left\|\partial_x^l\left(u-\widehat{\Pi}_N^{(\mu, \beta)} u\right)\right\|_{|x|^{2 \mu}}^2 & =\sum_{n=0}^{+\infty}|d_n|^2 / \beta \\
    & \lesssim_{\mu, l} \sum_{n=0}^{+\infty}\left|c_n\right|^2 \cdot(n+1) \cdot \beta \\
    & \lesssim_{\mu, l} \beta^{2 l} \int_N^{+\infty}|f(x)|^2 \cdot x^l d x.
    \end{aligned}
  \end{equation}
  This completes our proof.
\end{proof}

\section{Proof of \autoref{thm:0810-1}}
\label{appendix:B}
To prove Theorem~\ref{thm:0810-1}, we first give two lemmas.
\begin{lemma}\label{lem:Guo2006-weights}
  Let $\omega_{\alpha,\beta}(x)$ denote $x^\alpha e^{-\beta x}$.
  $A \sim_\alpha B$ means $(A / B)^{ \pm 1} \leqslant C_\alpha$, where $C_\alpha$ is a
  constant depends on $\alpha$. Then we have
  \begin{equation}\label{eq:Guo2006-2.19}
    \begin{aligned}
    \omega_{G, N, j}^{(\alpha, \beta)} & =\frac{1}{\beta^{\alpha+1}} \omega_{G, N, j}^{(\alpha)} \sim_\alpha \frac{1}{\beta^{\alpha+1}} \omega_\alpha\left(\xi_{G, N, j}^{(\alpha)}\right)\left(\xi_{G, N, j+1}^{(\alpha)}-\xi_{G, N, j}^{(\alpha)}\right) \\
    & =\omega_{\alpha, \beta}\left(\xi_{G, N, j}^{(\alpha, \beta)}\right)\left(\xi_{G, N, j}^{(\alpha, \beta)}-\xi_{G, N, j-1}^{(\alpha, \beta)}\right), \quad 0 \leqslant j \leqslant N.
    \end{aligned}
  \end{equation}
  As for $\omega_{R,N,j}^{(\alpha,\beta)}$, we have
  \begin{equation}\label{eq:Guo2006-2.12}
    \omega_{R, N, j}^{(\alpha, \beta)}=\frac{1}{\beta^{\alpha+1}} \omega_{R, N, j}^{(\alpha)}= \begin{cases}\frac{(\alpha+1) \Gamma^2(\alpha+1) \Gamma(N+1)}{\beta^{\alpha+1} \Gamma(N+\alpha+2)}, & j=0, \\ \frac{\Gamma(N+\alpha+1)}{\beta^\alpha \Gamma(N+2)} \frac{1}{\mathcal{L}_{N+1}^{(\alpha, \beta)}\left(\xi_{R, N, j}^{(\alpha, \beta)}\right) \partial_x \mathcal{L}_N^{(\alpha, \beta)}\left(\xi_{R, N, j}^{(\alpha, \beta)}\right)}, & 1 \leqslant j \leqslant N,\end{cases}
  \end{equation}
  besides,
  \begin{equation}\label{eq:Guo2006-2.21}
    \begin{aligned}
    \omega_{R, N, j}^{(\alpha, \beta)} & =\left(\xi_{R, N, j}^{(\alpha, \beta)}\right)^{-1} \omega_{G, N-1, j-1}^{(\alpha+1, \beta)} \\
    & \sim_\alpha \left(\xi_{R, N, j}^{(\alpha, \beta)}\right)^{-1} \omega_{\alpha+1, \beta}\left(\xi_{G, N-1, j-1}^{(\alpha+1, \beta)}\right)\left(\xi_{G, N-1, j-1}^{(\alpha+1, \beta)}-\xi_{G, N-1, j-2}^{(\alpha+1, \beta)}\right) \\
    & =\omega_{\alpha, \beta}\left(\xi_{R, N, j}^{(\alpha, \beta)}\right)\left(\xi_{R, N, j}^{(\alpha, \beta)}-\xi_{R, N, j-1}^{(\alpha, \beta)}\right), \quad 1 \leqslant j \leqslant N .
    \end{aligned}
  \end{equation}
\end{lemma}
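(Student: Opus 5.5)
The plan is to reduce everything to the unscaled case $\beta=1$ and then invoke the classical Laguerre--Gauss weight estimates of Guo and collaborators, i.e.\ the standard asymptotics for Gauss and Gauss--Radau weights with weight $x^\alpha e^{-x}$.

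\textbf{Step 1: scaling.} The nodes $\xi_{Z,N,j}^{(\alpha,\beta)}$ are the zeros of $\widehat{\mathscr{L}}^{(\alpha)}_{N+1}(\beta x)$ (respectively $x\widehat{\mathscr{L}}^{(\alpha+1)}_{N}(\beta x)$), so they satisfy $\xi_{Z,N,j}^{(\alpha,\beta)}=\xi_{Z,N,j}^{(\alpha)}/\beta$. Substituting $x=y/\beta$ in the exactness relation \eqref{Laguerre poly weights def} gives
\[
\int_0^\infty \phi(y/\beta)\,y^\alpha e^{-y}\,dy\,\beta^{-\alpha-1}=\sum_j \phi\bigl(\xi^{(\alpha)}_{Z,N,j}/\beta\bigr)\,\omega^{(\alpha,\beta)}_{Z,N,j}.
\]
Polynomials are invariant under this dilation, and the weights are unique given the nodes and the degree of exactness. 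Hence $\omega^{(\alpha,\beta)}_{Z,N,j}=\beta^{-\alpha-1}\omega^{(\alpha)}_{Z,N,j}$, which is the first equality in both \eqref{eq:Guo2006-2.19} and \eqref{eq:Guo2006-2.12}.

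\textbf{Step 2: the $\beta=1$ equivalence.} For the Gauss case we need $\omega^{(\alpha)}_{G,N,j}\sim_\alpha \omega_\alpha(\xi_j)(\xi_{j+1}-\xi_j)$. I would cite the known result, Guo--Wang 2006 (labelled (2.19) there), which is proved from the Christoffel-number representation
\[
\omega_j=\frac{\Gamma(N+\alpha+2)}{(N+1)!\,\xi_j\,[\partial_x\mathscr{L}^{(\alpha)}_{N+1}(\xi_j)]^2}
\]
together with Szeg\H{o}'s Theorem 8.9.2 and Theorem 6.31.3 (zero spacing and the bounds \eqref{eq:shen2011-7.42}--\eqref{eq:shen2011-7.44}). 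The Radau formulas \eqref{eq:Guo2006-2.12} come from the standard Gauss--Radau construction, with the node at $0$ fixed.

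The remaining scaled identity is then a direct recomputation. Using $\omega_\alpha(\xi/\beta\cdot\beta)=\beta^{\alpha}\xi_{\beta}^\alpha e^{-\beta\xi_\beta}$ and spacing $(\xi_{j+1}-\xi_j)/\beta$, we get $\beta^{-\alpha-1}\omega_\alpha(\xi_j)\Delta\xi_j=\omega_{\alpha,\beta}(\xi_j^\beta)\Delta\xi^\beta_j$. Replacing forward differences by backward ones, as written in the last line of \eqref{eq:Guo2006-2.19}, is harmless. By the Szeg\H{o} estimates, consecutive spacings are comparable and the weight changes only by a bounded factor between adjacent nodes, since $e^{-\Delta\xi}$ stays bounded and $\xi^\alpha$ ratios are bounded away from the endpoints. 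The one exception is the extreme node $j=0$, where a convention for $\xi_{-1}$ (taking it to be $0$) is needed.

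\textbf{Step 3: Radau via Gauss.} I would use the identity $\omega_{R,N,j}=\xi_{R,N,j}^{-1}\omega_{G,N-1,j-1}^{(\alpha+1)}$. It holds because the interior Radau nodes are the Gauss nodes for weight $x^{\alpha+1}$: apply exactness to $\phi(x)=x\,p(x)$, which kills the $j=0$ term. Then Step 2 with $\alpha+1$ gives the asymptotic form, and $\xi^{-1}\omega_{\alpha+1}(\xi)=\omega_\alpha(\xi)$ yields the final equality.

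\textbf{Main obstacle.} The substantive difficulty is the uniform two-sided comparability in Step 2 near the smallest and largest zeros, where the Bessel and Airy regimes apply. I would import this from the literature rather than reprove it.
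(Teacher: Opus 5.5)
Your proposal is correct and follows essentially the paper's route. The paper simply quotes (2.19), (2.12) and (2.21) from Guo--Wang 2006. You cite the same source for the only nontrivial ingredient, the $\beta=1$ comparability. You also correctly verify the scaling $\omega^{(\alpha,\beta)}_{Z,N,j}=\beta^{-\alpha-1}\omega^{(\alpha)}_{Z,N,j}$ and the Radau--Gauss identity by exactness, and you rightly note that the last step of (2.19) is only a $\sim$ (backward versus forward difference, with $\xi_{-1}=0$), not an equality.

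One aside is wrong but harmless. Near the largest zeros, $\omega_\alpha$ does \emph{not} change by a bounded factor between adjacent nodes, because there $\Delta\xi\sim N^{1/3}$ and $e^{-\Delta\xi}$ is not bounded below. You never need this, since both difference forms evaluate $\omega_{\alpha,\beta}$ at the same node $\xi_j$, so only comparability of consecutive spacings is used.
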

\eqref{eq:Guo2006-2.19}, \eqref{eq:Guo2006-2.12} and \eqref{eq:Guo2006-2.21} are
(2.19), (2.12) and (2.21) of \cite{ben2006generalized}.

\begin{lemma}\label{lem:0810-3}
  For $\widehat{\omega}_{2N+1,j}^{(\mu,\beta)}$ defined in \eqref{Hermite weights def},
  we have
  \begin{equation}\label{eq:0810-7}
    \widehat{\omega}_{2 N+1, j}^{(\mu,\beta)} \sim_\mu\left|x^{(\mu,\beta)}_{2N+1,j}\right|^{2 \mu-1}\left(\left|x^{(\mu,\beta)}_{2N+1,j}\right|^2-\left|x^{(\mu,\beta)}_{2N+1,j'}\right|^2\right),
  \end{equation}
  where
  \begin{equation}\label{eq:0810-7.5}
    j^{\prime}= \begin{cases}j+1 & 0 \leqslant j \leqslant N-1, \\ N+\frac{1}{2} & j=N, N+1, \\ j-1 & N+2 \leqslant j \leqslant 2 N+1.\end{cases}
  \end{equation}
  Here we set $x^{(\mu,\beta)}_{2N+1,N+\frac{1}{2}} = 0$.

  As for $\widehat{\omega}^{(\mu,\beta)}_{2N,j}$, we have
  \begin{equation}\label{eq:0810-8}
    \widehat{\omega}_{2 N, N}^{(\mu,\beta)} \sim_\mu \left(\beta \sqrt{N}\right)^{-2\mu-1},
  \end{equation}
  and for $j \neq N$,
  \begin{equation}\label{eq:0810-8.5}
    \widehat{\omega}_{2 N, j}^{(\mu,\beta)} \sim_\mu\left|x^{(\mu,\beta)}_{2N,j}\right|^{2 \mu-1}\left(\left|x^{(\mu,\beta)}_{2N,j}\right|^2-\left|x^{(\mu,\beta)}_{2N,j'}\right|^2\right),
  \end{equation}
  where $j'$ satisfies
  \begin{equation}\label{eq:0810-8.75}
    j^{\prime}= \begin{cases}j+1 & 0 \leqslant j \leqslant N-1, \\ j-1 & N+1 \leqslant j \leqslant 2 N.\end{cases}
  \end{equation}
\end{lemma}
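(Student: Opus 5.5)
The plan is to reduce everything to the Laguerre weight asymptotics of Lemma~\ref{lem:Guo2006-weights}, using the relation \eqref{eq:relaion-Laguerre-Hermite} between generalized Hermite and Laguerre polynomials. First I scale out $\beta$. By the substitution $y=\beta x$, the nodes satisfy $x^{(\mu,\beta)}_{N,j}=x^{(\mu,1)}_{N,j}/\beta$ and the weights satisfy $\widehat{\omega}^{(\mu,\beta)}_{N,j}=\beta^{-2\mu-1}\widehat{\omega}^{(\mu,1)}_{N,j}$. Both sides of \eqref{eq:0810-7}, \eqref{eq:0810-8.5} and \eqref{eq:0810-8} are homogeneous of degree $-2\mu-1$ in $\beta$, so it suffices to treat $\beta=1$.

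\textbf{Odd case $2N+1$.} By \eqref{eq:relaion-Laguerre-Hermite}, $H^{(\mu)}_{2N+2}(x)\propto\mathscr{L}^{(\mu-1/2)}_{N+1}(x^2)$, so the $2N+2$ Hermite nodes are $\pm\sqrt{\xi_{G,N,k}^{(\alpha)}}$ with $\alpha=\mu-\tfrac12$. Consider an even test function $\phi(x)=e^{-x^2}p(x^2)$ with $\deg p\le 2N+1$. Substituting $y=x^2$ gives
\[
\int_{\mathbb R}\phi|x|^{2\mu}dx=\int_0^\infty p(y)y^{\alpha}e^{-y}dy .
\]
Since the quadrature is symmetric, matching this against Laguerre--Gauss exactness shows that each of the two symmetric Hermite nodes $\pm\sqrt{\xi_k}$ carries the weight
\[
\widehat{\omega}=\tfrac12 e^{\xi_k}\,\omega^{(\alpha)}_{G,N,k}.
\]
Applying \eqref{eq:Guo2006-2.19} then gives
\[
\widehat{\omega}\sim_\mu \xi_k^{\alpha}\,(\xi_{k+1}-\xi_k)=|x_j|^{2\mu-1}\bigl(|x_{j'}|^2-|x_j|^2\bigr),
\]
with $\xi=x^2$ and the neighbouring node indexed by $j'$ as in \eqref{eq:0810-7.5}. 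For the smallest node I would use the form of \eqref{eq:Guo2006-2.19} with the backward difference $\xi_k-\xi_{k-1}$ and the convention $\xi_{-1}=0$, which matches $j'=N+\tfrac12$. One has to check that the forward and backward differences are comparable; this follows from the quasi-uniform spacing in \eqref{eq:shen2011-7.42}--\eqref{eq:shen2011-7.43}. I read \eqref{eq:0810-7} with an absolute value, so the sign of the difference does not matter.

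\textbf{Even case $2N$.} Here $H^{(\mu)}_{2N+1}(x)=x\,\mathscr{L}^{(\mu+1/2)}_N(x^2)$, so the nodes are $0$ together with $\pm\sqrt{\eta_k}$, where $\eta_k$ are the zeros of $\mathscr{L}^{(\alpha+1)}_N$. Again take $y=x^2$ and even test functions. The Hermite rule becomes a Laguerre rule for weight $y^\alpha e^{-y}$ whose nodes are $0\cup\{\eta_k\}$, that is, exactly the Gauss--Radau rule with nodes $\xi_{R,N,j}$. Hence the origin carries weight $e^{0}\omega_{R,N,0}^{(\alpha)}$ and each nonzero node carries $\tfrac12 e^{\xi}\omega_{R,N,j}^{(\alpha)}$. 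For $j\ne N$, \eqref{eq:Guo2006-2.21} gives \eqref{eq:0810-8.5} just as in the odd case. For $j=N$, \eqref{eq:Guo2006-2.12} gives
\[
\omega_{R,N,0}=\frac{(\alpha+1)\Gamma(\alpha+1)^2\,\Gamma(N+1)}{\Gamma(N+\alpha+2)}\sim_\alpha N^{-\alpha-1}=N^{-\mu-1/2}=(\sqrt N)^{-2\mu-1}.
\]
Restoring $\beta$ yields \eqref{eq:0810-8}.

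\textbf{Main obstacle.} The delicate step is the exponential factor. Relation \eqref{eq:0903-12} introduces $e^{\xi}$, and this must cancel exactly against the $e^{-\xi}$ inside $\omega_{\alpha}(\xi)$ in \eqref{eq:Guo2006-2.19} and \eqref{eq:Guo2006-2.21}. This works only if \eqref{eq:Guo2006-2.19} is read as holding uniformly in $j$ with that exponential factor included, which is the content of the cited result in~\cite{ben2006generalized}. A related subtlety is the extreme nodes: the largest node for forward differences, and the index shift $j-1$ in the Radau case. There I would invoke the spacing bounds to replace forward differences by backward ones within a $\mu$-dependent constant.
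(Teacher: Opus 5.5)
Your proposal is correct and follows essentially the paper's route. The substitution $y=x^2$ on even test functions identifies the origin weight with the Radau weight $\omega_{R,N,0}$, and each symmetric pair of Hermite weights with $\tfrac12 e^{\xi}\omega_{Z,N,j}$; the Guo et al.\ estimates \eqref{eq:Guo2006-2.12}, \eqref{eq:Guo2006-2.19}, \eqref{eq:Guo2006-2.21} then finish the argument. The paper does the same with Lagrange basis functions, keeps $\beta$ throughout, and writes out only the $2N$ case.

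One small correction concerns $j'$. In \eqref{eq:0810-7.5} and \eqref{eq:0810-8.75}, $j'$ is the neighbouring node closer to the origin, so the statement involves backward differences $\xi_k-\xi_{k-1}$ (with $\xi_{-1}=0$), not $\xi_{k+1}-\xi_k$. That backward form is exactly what the second line of \eqref{eq:Guo2006-2.19} and \eqref{eq:Guo2006-2.21} give. Use it throughout instead of arguing that forward and backward spacings are comparable. That comparability does not follow from the zero bounds \eqref{eq:shen2011-7.42}--\eqref{eq:shen2011-7.43}: one holds only near the origin with an $O(1)$ error, and the other is only an upper bound.
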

\begin{proof}
  We only prove \eqref{eq:0810-8} and \eqref{eq:0810-8.5}. The
  proof of \eqref{eq:0810-7} is similar.

  Let $v(y) \in \widehat{P}_N^{\beta^2, L},\, y = x^2, u(x) = v(y)$, then
  $u(x) \in \widehat{P}_{2N}^{\beta, H}$. According to the integration by substitution formula,
  \begin{equation}\label{eq:0831-5}
    \int_0^{+\infty} v(y) y^{\mu-\frac{1}{2}} d y=\int_{-\infty}^{+\infty} u(x)|x|^{2 \mu} d x.
  \end{equation}
  Combining \eqref{eq:0831-5} with \eqref{Laguerre weights def} and
  \eqref{Hermite weights def} yields
  \begin{equation}\label{eq:0831-6}
    \sum_{j=0}^{2 N} u\left(x_{2 N, j}^{(\mu, \beta)}\right) \widehat{\omega}_{2 N, j}^{(\mu, \beta)}=\sum_{j=0}^N v\left(\xi_{R, N, j}^{\left(\mu-\frac{1}{2}, \beta^2\right)}\right) \widehat{\omega}_{R,N,j}^{\left(\mu-\frac{1}{2},\beta^2\right)}.
  \end{equation}
  For $0 \leqslant i \leqslant 2N$, define $u_i(x) \in \widehat{P}_{2 N}^{\beta, H}$ such that
  \begin{equation}\label{eq:0903-7}
    u_i\left(x_{2 N, j}^{(\mu, \beta)}\right)=\delta_{i j},
  \end{equation}
  then for $1 \leqslant j \leqslant N$, we know that $u_{N-j}(x) = u_{N+j}(-x)$, combining
  with \eqref{Hermite weights def} yields
  \begin{equation}\label{eq:0903-8}
    \begin{aligned}
    \widehat{\omega}_{2 N, N-j}^{\left(\mu, \beta\right)} & =\int_{-\infty}^{+\infty} u_{N-j}(x)|x|^{2 \mu} d x \\
    & =\int_{-\infty}^{+\infty} u_{N+j}(x)|x|^{2 \mu} d x \\
    & =\widehat{\omega}_{2 N, N+j}^{\left(\mu, \beta\right)}.
    \end{aligned}
  \end{equation}
  Notice that
  $x_{2N,j}^{\left(\mu,\beta\right)},\, 0\leqslant j \leqslant 2N$ are zeros of
  $H_{2N+1}^{(\mu)}(\beta x)$, $\xi_{R,N,j}^{\left(\mu-\frac{1}{2},\beta^2\right)}$ are
  zeros of $y\mathscr{L}_N^{\left(\mu+\frac{1}{2}\right)}(\beta^2 y)$, Combining the 
  conversion relationship \eqref{eq:relaion-Laguerre-Hermite} between Hermite and Laguerre polynomials yields
  \begin{equation}\label{eq:0831-4}
    \xi_{R, N, j}^{\left(\mu-\frac{1}{2}, \beta^2\right)}=\left|x_{2 N, N \pm j}^{(\mu, \beta)}\right|^2.
  \end{equation}
  Let $u(x) = u_N(x)$, then $v(y)=v(x^2)=u(x)$ satisfying
  \begin{displaymath}
    v\left(\xi_{R,N,j}^{\left(\mu-\frac{1}{2},\beta^2\right)}\right) = \delta_{0j}.
  \end{displaymath}
  Substituting $u(x)$ and $v(y)$ back into \eqref{eq:0831-6} yields
  \begin{equation}\label{eq:0903-9}
    \widehat{\omega}_{2 N, N}^{(\mu, \beta)}=\widehat{\omega}_{R, N, 0}^{\left(\mu-\frac{1}{2}, \beta^2\right)}.
  \end{equation}
  A further substitution of $u(x)=u_{N-j}(x)+u_{N+j}(x), \,v(y)=v\left(x^2\right)=u(x),\,1\leqslant j \leqslant N$
  into \eqref{eq:0831-6} yields
  \begin{equation}\label{eq:0903-10}
    \widehat{\omega}_{2 N, N-j}^{(\mu, \beta)}+\widehat{\omega}_{2 N, N+j}^{(\mu, \beta)}=\widehat{\omega}_{R, N, j}^{\left(\mu-\frac{1}{2}, \beta^2\right)}.
  \end{equation}
  Combining with \eqref{eq:0903-8} we know
  \begin{equation}\label{eq:0903-11}
    \widehat{\omega}_{2 N, N-j}^{(\mu, \beta)}=\widehat{\omega}_{2 N, N+j}^{(\mu, \beta)}=\frac{1}{2} \widehat{\omega}_{R, N, j}^{\left(\mu-\frac{1}{2}, \beta^2\right)}.
  \end{equation}
  From \eqref{eq:0903-9}, \eqref{eq:0903-12} and \eqref{eq:Guo2006-2.12} we have
  \begin{equation}\label{eq:0903-13}
    \begin{aligned}
    \widehat{\omega}_{2 N, N}^{(\mu, \beta)} & =\widehat{\omega}_{R, N, 0}^{\left(\mu-\frac{1}{2}, \beta^2\right)} \\
    & =\omega_{R, N, 0}^{\left(\mu-\frac{1}{2}, \beta^2\right)} \\
    & =\frac{\left(\mu+\frac{1}{2}\right) \Gamma^2\left(\mu+\frac{1}{2}\right) \Gamma(N+1)}{\beta^{2 \mu+1} \Gamma\left(N+\mu+\frac{3}{2}\right)} \\
    & \sim_\mu(\beta \sqrt{N})^{-2\mu-1}.
    \end{aligned}
  \end{equation}
  This completes the proof of \eqref{eq:0810-8}.
  Moreover, from \eqref{eq:0903-11}, \eqref{eq:0903-12} and \eqref{eq:Guo2006-2.21},
  it follows that
  \begin{equation}
    \begin{aligned}
    \widehat{\omega}_{2 N, N \pm j}^{(\mu, \beta)} & =\frac{1}{2} \widehat{\omega}_{R, N, j}^{\left(\mu-\frac{1}{2}, \beta^2\right)} \\
    & =\frac{1}{2} e^{\beta^2 \xi_{R, N, j}^{\left(\mu-\frac{1}{2}, \beta^2\right)}} \omega_{R, N, j}^{\left(\mu-\frac{1}{2}, \beta^2\right)} \\
    & \sim_\mu\left|\xi_{R, N, j}^{\left(\mu-\frac{1}{2}, \beta^2\right)}\right|^{\mu-\frac{1}{2}}\left(\xi_{R, N, j}^{\left(\mu-\frac{1}{2}, \beta^2\right)}-\xi_{R, N, j-1}^{\left(\mu-\frac{1}{2}, \beta^2\right)}\right).
    \end{aligned}
  \end{equation}
  Combining with \eqref{eq:0831-4} yields
  \begin{equation}\label{eq:0903-15}
    \widehat{\omega}_{2 N, N \pm j}^{(\mu, \beta)} \sim_\mu\left|x_{2 N, N \pm j}^{(\mu, \beta)}\right|^{2 \mu-1}\left(\left|x_{2 N, N \pm j}^{(\mu, \beta)}\right|^2-\left|x_{2 N, N \pm j \mp 1}^{(\mu, \beta)}\right|^2\right).
  \end{equation}
  This completes the proof of \eqref{eq:0810-8.5}.
\end{proof}
Now we return to the proof of \autoref{thm:0810-1}.
\begin{proof}
  Notice that
  \begin{equation}\label{eq:0810-14}
    \begin{aligned}
    \left\|u-\widehat{I}_{2 N}^{(\mu, \beta)} u\right\|_\omega & \leqslant\left\|u-\widehat{\Pi}_{2 N}^{(\mu, \beta)} u\right\|_\omega+\left\|\widehat{I}_{2 N}^{(\mu, \beta)}\left(u-\widehat{\Pi}_{2 N}^{(\mu, \beta)} u\right)\right\|_\omega \\
    & \triangleq E_1+E_2.
    \end{aligned}
  \end{equation}
  Let $e:=u-\widehat{\Pi}_{2 N}^{(\mu, \beta)} u$, thanks to the
  exactness of quadrature \eqref{Hermite weights def},
  \begin{equation}\label{eq:0810-15}
    \begin{aligned}
    E_2^2 & =\int_{-\infty}^{\infty}\left|\widehat{I}_{2 N}^{(\mu, \beta)} e\right|^2|x|^{2 \mu} dx\\
    & =\sum_{j=0}^{2N} \widehat{\omega}_{2 N, j}^{(\mu, \beta)} \left|e\left(x_{2N,j}^{(\mu,\beta)}\right)\right|^2.
    \end{aligned}
  \end{equation}
  From Lemma~\ref{lem:0810-3} we have
  \begin{equation}\label{eq:0810-16}
    \begin{aligned}
      E_{21} & = \widehat{\omega}_{2 N, N}^{(\mu, \beta)}|e(0)|^2 \\
      & \lesssim_\mu \beta^{-2 \mu-1} N^{-\mu-\frac{1}{2}}|e(0)|^2.
    \end{aligned}
  \end{equation}
  Applying Lemma~\ref{lem:0810-3} again, we obtain
  \begin{equation}\label{eq:0810-17}
    \begin{aligned}
    E_{22} & = \sum_{j=1}^N \widehat{\omega}_{2 N, N+j}^{(\mu, \beta)}\left|e\left(x_{2N,N+j}^{(\mu,\beta)}\right)\right|^2 \\
    & \lesssim_\mu \sum_{j=1}^N \left(x_{2N,N+j}^{(\mu,\beta)}\right)^{2 \mu-1}\left(\left(x_{2N,N+j}^{(\mu,\beta)}\right)^2-\left(x_{2N,N+j-1}^{(\mu,\beta)}\right)^2\right) \\
    & \times \left|e\left(x_{2N,N+j}^{(\mu,\beta)}\right)\right|^2 \\
    & \leqslant\sup \left\{\left(x_{2N,N+j}^{(\mu,\beta)}\right)^{2 \mu-1}\left(x_{2N,N+j}^{(\mu,\beta)}+x_{2N,N+j-1}^{(\mu,\beta)}\right)\left|e\left(x_{2N,N+j}^{(\mu,\beta)}\right)\right|^2\right\} \\
    & \times \sum_{j=1}^N\left(x_{2N,N+j}^{(\mu,\beta)}-x_{2N,N+j-1}^{(\mu,\beta)}\right).
    \end{aligned}
  \end{equation}
  From (7.44) of \cite{shen2011spectral} and \eqref{eq:relaion-Laguerre-Hermite} we know
  \begin{equation}\label{eq:0810-18}
    \sum_{j=1}^N\left(x_{2N,N+j}^{(\mu,\beta)}-x_{2N,N+j-1}^{(\mu,\beta)}\right)=x_{2N,2N}^{(\mu,\beta)} \lesssim_\mu \sqrt{N}/\beta.
  \end{equation}
  From the Sobolev inequality (see lemma B.4 of \cite{shen2011spectral})
  \begin{equation}\label{eq:0810-19}
    \max _{x \in[a, b]}|u(x)| \leqslant \frac{1}{\sqrt{b-a}}\|u\|_{L^2[a, b]}+\sqrt{b-a}\left\|u^{\prime}\right\|_{L^2[a, b]}
  \end{equation}
  we know
  \begin{equation}\label{eq:0810-20}
    \left|e\left(x_{2N,N+j}^{(\mu,\beta)}\right)\right|^2 \lesssim \int_{x_{2N,N+j}^{(\mu,\beta)}}^{x_{2N,N+j}^{(\mu,\beta)}+\left(\beta\sqrt{N}\right)^{-1}} \beta\sqrt{N} e^2 + \left(\beta\sqrt{N}\right)^{-1} \left(e^{\prime}\right)^2 d x.
  \end{equation}
  Hence
  \begin{equation}\label{eq:0810-21}
    \begin{aligned}
      & \left(x_{2N,N+j}^{(\mu,\beta)}\right)^{2 \mu-1}\left(x_{2N,N+j}^{(\mu,\beta)}+x_{2N,N+j-1}^{(\mu,\beta)}\right)\left|e\left(x_{2N,N+j}^{(\mu,\beta)}\right)\right|^2 \\
      \lesssim & \int_{x_{2N,N+j}^{(\mu,\beta)}}^{x_{2N,N+j}^{(\mu,\beta)}+\left(\beta\sqrt{N}\right)^{-1}} \beta\sqrt{N} e^2 |x|^{2\mu} + \left(\beta\sqrt{N}\right)^{-1} \left(e^{\prime}\right)^2 |x|^{2\mu} d x \\
      \lesssim & \beta \sqrt{N} \left\|e\right\|_\omega^2 + \left(\beta\sqrt{N}\right)^{-1} \left\|e^{\prime}\right\|_\omega^2.
    \end{aligned}
  \end{equation}
  Substituting \eqref{eq:0810-18} and \eqref{eq:0810-21} back into
  \eqref{eq:0810-17} yields
  \begin{equation}\label{eq:0810-22}
    \begin{aligned}
      E_{22} & = \sum_{j=1}^N \widehat{\omega}_{2 N, N+j}^{(\mu, \beta)}\left|e\left(x_{2N,N+j}^{(\mu,\beta)}\right)\right|^2 \\
      & \lesssim_\mu N \left\|e\right\|_\omega^2 + \beta^{-2} \left\|e^{\prime}\right\|_\omega^2.
    \end{aligned}
  \end{equation}
  Similarly,
  \begin{equation}\label{eq:0810-22.5}
    \begin{aligned}
      E_{23} & = \sum_{j=1}^N \widehat{\omega}_{2 N, N-j}^{(\mu, \beta)}\left|e\left(x_{2N,N+j}^{(\mu,\beta)}\right)\right|^2 \\
      & \lesssim_\mu N \left\|e\right\|_\omega^2 + \beta^{-2} \left\|e^{\prime}\right\|_\omega^2.
    \end{aligned}
  \end{equation}
  Hence for $E_2$ defined in \eqref{eq:0810-14}, combining
  \eqref{eq:0810-16}, \eqref{eq:0810-22} with \eqref{eq:0810-22.5}
  yields
  \begin{equation}\label{eq:0810-23}
    \begin{aligned}
      E_2^2 &= E_{21} + E_{22} + E_{23} \\
      & \lesssim_\mu (\beta \sqrt{N})^{-2 \mu-1}|e(0)|^2 + N \left\|e\right\|_\omega^2 + \beta^{-2} \left\|e^{\prime}\right\|_\omega^2.
    \end{aligned}
  \end{equation}
  From \eqref{eq:0810-14} and \eqref{eq:0810-23} we can prove
  \eqref{eq:0810-12}. The proof for
  \eqref{eq:0810-13} is analogous.
\end{proof}

\section{Proof of Theorem~\ref{thm:1201-1}}
\label{appendix:C}
\begin{proof}
  Since $g(x)$ is bounded and analytic in $-a \leqslant \operatorname{Im}(x) \leqslant a$, 
  from Theorem IV in the introduction of the classic work by Paley and Wiener~\cite{paley1934fourier},
  we know for $\mu,r \in \mathbb{N}, \,0 \leqslant r \leqslant \mu$,
  \begin{equation}\label{eq:1201-4}
    \begin{aligned}
    & \left\|\partial^r \mathcal{F}[u](\xi) \cdot \mathbb{I}_{\left\{|\xi|>B\right\}}\right\| \\
    = & \left\|\partial \mathcal{F}\left[u \cdot x^r\right](\xi) \cdot \mathbb{I}_{\left\{|\xi|>B\right\}}\right\| \\
    \leqslant & \left\|e^{a|\xi|} \mathcal{F}\left[u \cdot x^r\right](\xi) \cdot \mathbb{I}_{\left\{|\xi|>B\right\}}\right\| \cdot e^{-aB} \\
    \lesssim&_{\mu,u}\,e^{-aB}.
    \end{aligned}
  \end{equation}
  Hence
  \begin{equation}\label{eq:1201-5}
    \|\mathcal{F}[u](B \xi)\|_{H^\mu(\mathbb{R} \backslash[-1,1])} \lesssim_{\mu, u} e^{-a B}.
  \end{equation}
  If $\mu \notin \mathbb{N}$, by interpolation inequality \eqref{eq:1201-5} also holds.
  Combining \eqref{eq:1201-5} with Theorem~\ref{thm:bigthm} yields \eqref{eq:1201-1}. 
  This proves the error estimate result when using a set of Hermite functions for approximation.

  Morever, when $g(x)$ has singularities, from Theorem I in the introduction of~\cite{paley1934fourier},
  we know that $\mathcal{F}[u](\xi)$ decays at most exponentially, hence
  the convergence rate estimated in
  \eqref{eq:1201-1} cannot be improved to $e^{-cN^\alpha}$ for $\alpha > \frac{2}{3}$
  via Theorem~\ref{thm:bigthm}.

  From Theorem~\ref{cor:Laguerre-proj}, to analyze the error of using two sets of Laguerre functions to approximate
  $u(x)$, it is necessary to consider the decay rate of the Fourier transform of
  $u(x^2) = e^{-x^4}g(x^2)$ and its derivatives up to a certain order.
  This can be achieved by shifting the path of integration. Specifically,
  we first assume $r,\mu+\frac{1}{2} \in \mathbb{N},\, 0 \leqslant r \leqslant \mu+\frac{1}{2}$,
  consider $h(x) = x^r e^{-x^4} g(x^2)$. Since $g(x)$ is analytic and bounded in
  $-a \leqslant \operatorname{Im}(x) \leqslant a$, $g(x^2)$ is bounded and analytic in
  \begin{equation}\label{eq:def-Omega}
    \Omega = \left\{ x=t+is: -\frac{a}{2} \leqslant ts \leqslant \frac{a}{2}, \, t,s \in \mathbb{R} \right\}.
  \end{equation}
  Given that
  \begin{equation}\label{eq:Fourier-h}
    \mathcal{F}[h](\xi)=\frac{1}{\sqrt{2 \pi}} \int_{-\infty}^{+\infty} x^r e^{-x^4} g\left(x^2\right) e^{-i x \xi} d x,
  \end{equation}
  for $\xi > 0$, taking advantage of the analyticity of $h(x)$ within $\Omega$,
  we shift the path of integration to the lower boundary of $\Omega$ using
  the method of contour integration, i.e.,
  \begin{equation}\label{eq:1209-2}
    \begin{aligned}
    \mathcal{F}[h](\xi) & =\frac{1}{\sqrt{2 \pi}} \int_{-\infty}^0\left(t+i \frac{a}{2 t}\right)^r e^{-\left(t+i \frac{a}{2 t}\right)^4} g\left(\left(t+i \frac{a}{2 t}\right)^2\right) e^{-i\left(t+i \frac{a}{2 t}\right) \xi} d t, \\
    & +\frac{1}{\sqrt{2 \pi}} \int_0^{\infty}\left(t-i \frac{a}{2 t}\right)^r e^{-\left(t-i \frac{a}{2 t}\right)^4} g\left(\left(t-i \frac{a}{2 t}\right)^2\right) e^{-i\left(t-i \frac{a}{2 t}\right) \xi} d t \\
    & \triangleq \frac{1}{\sqrt{2 \pi}} I_1+\frac{1}{\sqrt{2 \pi}} I_2.
    \end{aligned}
  \end{equation}
  For $I_2$, assume $|g(x)| \leqslant G,\, \forall x \in \Omega$, we have
  \begin{equation}\label{eq:1209-3}
    \begin{aligned}
    \left|I_2\right| & \leqslant \int_0^{\infty}\left(t+\frac{a}{2 t}\right)^r e^{-\left(t^4+\left(\frac{a}{2 t}\right)^4\right)+\frac{3}{2} a^2} G e^{-\frac{a}{2 t} \xi} d t \\
    & \lesssim \int_0^{\infty} e^{-\frac{1}{2} t^4-\frac{a}{2 t} \xi} d t.
    \end{aligned}
  \end{equation} 
  Let $\varphi(t)=\frac{1}{2}t^4+\frac{a}{2 t} \xi$, then $\varphi(t)$
  has a minimum point $t_0 =\left(\frac{a \xi}{4}\right)^{\frac{1}{5}}$,
  and it is not difficult to prove that
  \begin{equation}\label{eq:1118-10}
    \varphi(t) \geqslant \varphi\left(t_0\right)+\frac{\varphi^{\prime \prime}\left(t_0\right)}{2}\left(t-t_0\right)^2 \quad \forall t>0.
  \end{equation}
  Hence
  \begin{equation}\label{eq:1118-11}
    \begin{aligned}
    \int_0^{\infty} e^{-\varphi(t)} dt & \leqslant \int_0^{\infty} e^{-\varphi\left(t_0\right)-\frac{\varphi^{\prime \prime}\left(t_0\right)}{2}\left(t-t_0\right)^2} d t \\
    & \leqslant e^{-\varphi\left(t_0\right)} \frac{1}{\sqrt{\varphi^{\prime \prime}\left(t_0\right)}} \sqrt{2 \pi}.
    \end{aligned}
  \end{equation}
  Substituting $\varphi\left(t_0\right)=\frac{5}{2}\left(\frac{a \xi}{4}\right)^{\frac{4}{5}}$,
  we obtain
  \begin{equation}\label{eq:1118-12}
    \left| I_2 \right| \lesssim \int_0^{\infty} e^{-\varphi(t)} dt \lesssim e^{-c|\xi|^{\frac{4}{5}}}.
  \end{equation}
  Similarly, $\left|I_1\right| \lesssim e^{-c|\xi|^{\frac{4}{5}}}$, combining
  with \eqref{eq:1209-2} yields
  \begin{equation}\label{eq:1118-13}
    \left|\mathcal{F}[h](\xi)\right| \lesssim_{\mu, g} e^{-c|\xi|^{\frac{4}{5}}}.
  \end{equation}
  Recall that $h(x) = x^r e^{-x^4} g(x^2)$ where $0 \leqslant r \leqslant \mu + \frac{1}{2}$,
  hence by applying Corollary~\ref{cor:Laguerre-proj} we obtain \eqref{eq:1201-2}.
  If $\mu + \frac{1}{2} \notin \mathbb{N}$, by interpolation inequality, it can be shown that \eqref{eq:1201-2} still holds.

  If $g(x^2)$ remains a bounded analytic function in the strip region
  $-b \leqslant \operatorname{Im}(x) \leqslant b$, again by Theorem IV of~\cite{paley1934fourier},
  we know for $\mu+\frac{1}{2}, r \in \mathbb{N},\,0 \leqslant r \leqslant \mu + \frac{1}{2}$,
  $h(x) = x^r e^{-x^4} g(x^2)$ satisfies
  \begin{equation}\label{eq:h-ideal-decay}
    \left|\mathcal{F}[h](\xi)\right| \lesssim_{\mu, g} e^{-c|\xi|}.
  \end{equation}
  Hence by Corollary~\ref{cor:Laguerre-proj} we get \eqref{eq:1201-3}. If
  $\mu + \frac{1}{2} \notin \mathbb{N}$, by interpolation inequality
  \eqref{eq:1201-3} also holds. This completes our proof.
\end{proof}
\end{appendices}


\bibliography{sn-bibliography}
\end{document}